\newcommand{\leqnomode}{\tagsleft@true}
\newcommand{\reqnomode}{\tagsleft@false}
\date{}
\def\nd{\noindent}
\def\thend{\rule{3mm}{3mm}}
\newtheorem{theorem}{Theorem}[section]
\newtheorem{Example}{Example}[section]
\newtheorem{prop}{Proposition}[section]
\newtheorem{lem}{Lemma}[section]
\newtheorem{rmk}{Remark}[section]
\newcommand{\Int}{\displaystyle\int_{\Rn}}
\newcommand{\Rn}{\mathbb{R}^N}
\begin{document}
	\title[Fractional elliptic systems via the nonlinear Rayleigh quotient]{Nonlocal elliptic systems via nonlinear Rayleigh quotient with general concave and coupling nonlinearities}
	\vspace{1cm}
	
	\author{Edcarlos D. Silva}
	\address{Edcarlos D da Silva \newline  Universidade Federal de Goias, IME, Goi\^ania-GO, Brazil}
	\email{\tt edcarlos@ufg.br}
	
	\author{Elaine A. F. Leite}
	\address{Elaine A. F. Leite \newline Intituto Federal de Goi\'as, Campus Goi\^anica, Goi\^ania-GO, Brazil }
	\email{\tt elainealtino@discente.ufg.br}
	
	\author{Maxwell L. da Silva}
	\address{SILVA, Maxwell L. \newline Universidade Federal de Goias, IME, Goi\^ania-GO, Brazil }
	\email{\tt maxwell@ufg.br}

	\subjclass[2010]{35A01,35A15,35A23,35A25} 
	
	\keywords{Fractional Laplacian, Nonlocal elliptic systems, Superlinear nonlinearities, Concave-convex nonlinearities, Nonlinear Rayleigh quotient}
	\thanks{The first author was partially supported by CNPq with grant 309026/2020-2. The second author was partially supported by IFG with grant 23744.001007/2019-69.}
	\begin{abstract}
		In this work, we shall investigate existence and multiplicity of solutions for a nonlocal elliptic systems driven by the fractional Laplacian. Specifically, we establish the existence of two positive solutions for following class of nonlocal elliptic systems:	
		\begin{equation*}
			\left\{\begin{array}{lll} 
				(-\Delta)^su +V_1(x)u =   \lambda|u|^{p - 2}u+  \frac{\alpha}{\alpha+\beta}\theta |u|^{\alpha - 2}u|v|^{\beta}, \;\;\; \mbox{in}\;\;\; \mathbb{R}^N,\\
				(-\Delta)^sv +V_2(x)v=   \lambda|v|^{q - 2}v+  \frac{\beta}{\alpha+\beta}\theta |u|^{\alpha}|v|^{\beta-2}v,  \;\;\; \mbox{in}\;\;\; \mathbb{R}^N,\\
				(u, v) \in H^s(\mathbb{R}^N) \times  H^s(\mathbb{R}^N).
			\end{array}\right.
		\end{equation*}
		Here we mention that $\alpha > 1, \beta > 1, 1 \leq p \leq q < 2 < \alpha + \beta < 2^*_s$, $\theta > 0, \lambda > 0, N > 2s$, and $s \in (0,1)$. Notice also that continuous potentials $V_1, V_2: \mathbb{R}^N \to \mathbb{R}$ satisfy some extra assumptions. Furthermore, we find the largest positive number $\lambda^* > 0$ such that our main problem admits at least two positive solutions for each $ \lambda \in (0, \lambda^*)$. This can be done by using the nonlinear Rayleigh quotient together with the Nehari method. The main feature here is to minimize the energy functional in Nehari manifold which allows us to prove our main results without any restriction on size of parameter $\theta > 0$.
	\end{abstract}
	\maketitle
	
	\section{Introduction}
	In this research our main objective is to investigate the existence and multiplicity of positive solutions for a class of nonlocal elliptic systems defined in the whole space $\mathbb{R}^N$ where the coupled term is superlinear. Furthermore, we consider different kind of concave convex terms in our main problem. More specifically, we shall consider the following nonlocal elliptic system:
	\begin{equation}\left\{\begin{array}{lll}\label{sistema Principal} 
			(-\Delta)^su +V_1(x)u =   \lambda|u|^{p - 2}u+  \frac{\alpha}{\alpha+\beta}\theta |u|^{\alpha - 2}u|v|^{\beta},  \;\;\; \mbox{in}\;\;\; \mathbb{R}^N, \\
			(-\Delta)^sv +V_2(x)v=   \lambda|v|^{q - 2}v+  \frac{\beta}{\alpha+\beta}\theta |u|^{\alpha}|v|^{\beta-2}v,  \;\;\; \mbox{in}\;\;\; \mathbb{R}^N, \\
			(u, v) \in H^s(\mathbb{R}^N) \times  H^s(\mathbb{R}^N).
		\end{array}\right.
	\end{equation}
	Recall that $(-\Delta)^s$ represents the fractional Laplacian, $\alpha > 1, \beta > 1, 1 \leq p \leq q < 2 < \alpha + \beta < 2^*_s$, $\theta > 0, \lambda > 0, N > 2s$, and $s \in (0,1), 2_s^* = 2 N/(N - 2s)$. Later on, we shall consider some extra assumptions on the continuous potentials $V_1, V_2: \mathbb{R}^N \to \mathbb{R}$.
	
	For the scalar case, we observe there are several physical applications such as nonlinear optics. Furthermore, the fractional Laplacian operator has been accepted as a model for diverse physical phenomena such as diffusion-reaction equations, quasi-geostrophic theory, Schr\"odinger equations, Porous medium problems, see for instance \cite{aka,biboa,ya,consta,pablo,las}. For further applications such as continuum mechanics, phase transition phenomena, populations dynamics, image processes, game theory, see \cite{ber,cafa,las}. It is important to stress that  semilinear nonlocal reaction-diffusion equations have attracted some attention in the last few years. The main motivation for this kind of problem is to combine nonlinear and quasilinear nonlocal terms in order to model a nonlinear diffusion. On this subject we refer the reader to \cite{vasquez1,vasquez2,vasquez3} and references therein.

	Now, we mention also that nonlocal elliptic systems have been widely studied in the last years, see \cite{brande,colorado,jotinha}. This kind of problem deals with the fractional Laplacian operator which was generalized in many context, see \cite{guia}.  For the local case, that is, assuming that $s = 1$ we refer the reader to the important works \cite{ambro1,ambro2,defigueiredo,maia,oliveira,pompo}. In those works was proved several results on existence and multiplicity of solutions taking into account some hypotheses on the potential as well as in the nonlinearity. For further results on fractional elliptic system we refer the interested reader to \cite{guo,lu,Laskin1}.  It is important to recall that a pair $(u,v)$ is said to be a ground state solution for the System  \eqref{sistema Principal} when $(u,v)$ has the minimal energy among all nontrivial solutions. At the same time, a nontrivial solution $(u,v)$ is a bound solution for the System  \eqref{sistema Principal} whenever $(u,v)$ has finite energy.
	
	Recall that in \cite{colorado} the authors considered the following nonlocal elliptic system
	\begin{equation}\label{lindo}
		\left\{\begin{array}{lll}
			(-\Delta)^su + \lambda_1 u =   \mu_1|u|^{2p - 2}u +  \beta |u|^{p - 2}u |v|^{p},  \;\;\; \mbox{in}\;\;\; \mathbb{R}^N, \\
			(-\Delta)^sv + \lambda_2 v= \mu_2 |v|^{2 p - 2}v + \beta |u|^{p}|v|^{p-2}v,  \;\;\; \mbox{in}\;\;\; \mathbb{R}^N, \\
			(u, v) \in H^s(\mathbb{R}^N) \times  H^s(\mathbb{R}^N).
		\end{array}\right.
	\end{equation}
	where $N \in \{1, 2,3\}$, $\lambda_i, \mu_i > 0, i = 1,2, p \geq 2, (p-2)N/p < s < 1$. The authors proved several results on existence of ground and bound state solutions assuming that $p > 2$ or $p = 2$. The main ingredient in that work was to combine the Nehari method and the size of $\beta > 0$ in order to avoid semitrivial solutions. In fact, assuming that $\beta > 0$ is small, the authors proved also that the bound state for the Problem \eqref{lindo} is a semitrivial solution. On the other hand, assuming that $\beta > 0$ is large enough, the authors proved existence of ground state solutions $(u,v)$ for the Problem \eqref{lindo} where $u \neq 0$ and $v \neq 0$. In other words, for each $\beta > 0$ large enough, the authors ensured that for the Problem \eqref{lindo} there exists at least one non-semitrivial ground state solution.  At the same time, we observe that Problem \eqref{lindo} is superlinear at the origin and at infinity.  Motivated in part by the previous discussion we shall consider existence and multiplicity of solutions for the System  \eqref{sistema Principal} assuming that the nonlinear term is a concave-convex function. Furthermore, for the coupling term we consider a more general function due the fact that $1 < \alpha, \beta < 2^*_s$ where $\alpha$ and $\beta$ can be different. Hence, our main objective in the present work is to guarantee existence and multiplicity of solutions without any restriction on the size of $\theta$. For similar results on nonlocal elliptic problems we refer the reader also to \cite{ai,chen}.
	
	The main difficulty in the present work is to find the largest positive number $\lambda^* > 0$ in such way the Nehari method can be applied. It is well known that the fibering maps associated to the energy functional for the System \eqref{sistema Principal} has some inflections points for each $\lambda > 0$ large enough. This kind of problem does not allow us to apply the Lagrange Multiplier Theorem in general. Since we are looking for a minimization problem for the energy functional associated to the System  \eqref{sistema Principal} in the Nehari manifold we need to prove that any minimizer are nondegenerated.  It is important to stress that for each $\lambda \in (0, \lambda^*)$ the fibering maps does not admit any inflections points. Therefore, we employ the nonlinear Rayleigh quotient proving existence of minimizers for the energy functional in some subsets of the Nehari manifold. More precisely, we find the existence of at least two positive solutions for the System  \eqref{sistema Principal} whenever $\lambda \in (0, \lambda^*)$. On this subject, we refer the interested reader to the important works \cite{yavdat1,yavdat2}. Another difficulty in the present work is to treat the energy functional associated to the System  \eqref{sistema Principal} taking into account the nonlinear Raleigh quotient where $p \leq q$ and $1 < p, q < 2$. In fact, for each $p \neq q$ the functionals associated to the nonlinear Raleigh quotient are given only implicitly. This difficulty is inherent with the complexity of the System  \eqref{sistema Principal}. Indeed, our main problem consider a huge class of concave terms due to the fact that $p$ and $q$ can be different. Furthermore, for the coupled term we have also some difficulties. The first one arises from the fact that given a pair $ (u,v) \neq (0,0)$ the projections on the Nehari manifold is not defined in general. Under these conditions, we introduce an open set $\mathcal{A}$ in such way that any function $(u,v) \in \mathcal{A}$ admits exactly two projections in the Nehari manifold.  The second difficulty appears due to the coupled term allow us to find semitrivial solutions of the type $(u,0)$ and $(0,v)$ for the System  \eqref{sistema Principal}. Here we refer the interested reader to the works \cite{ambro1,ambro2} where the authors considered some assumptions in order to avoid semitrivial solutions. It is worthwhile to mention that finding semitrivial solutions of the type $(u,0)$ and $(0,v)$ for the System  \eqref{sistema Principal} is equivalent to find respectively weak solutions for the following scalar problems:
	\begin{eqnarray}\label{eqcasopartic} 
		(-\Delta)^s u +V_1(x)u &=&   \lambda|u|^{p - 2}u, \, \, u \in H^s(\mathbb{R}^N)
	\end{eqnarray}
	and 
	\begin{eqnarray}\label{eqcasopartic} 
		(-\Delta)^s v +V_2(x)v &=&   \lambda|v|^{q - 2}v, \, \, v \in H^s(\mathbb{R}^N).
	\end{eqnarray}
	For this kind of scalar nonlocal elliptic problems we refer the reader to \cite{bisci,servadei, servadei1,servadei2,secchi,secchii,guia} and references therein. Here we mention that the literature for fractional elliptic problems is interesting and quite huge. For more references we infer the reader to \cite{chang,felmer,pala}.
	
	Our main objective in the present work is to ensure existence and multiplicity of solutions for the System  \eqref{sistema Principal} avoiding semitrivial solutions. More precisely, we prove our main results using the Nehari method together with the Nonlinear Rayleigh quotient which permit us to use find curve $s \mapsto G(s), s \in (-\delta, \delta)$ such that $G(s)$ belongs to the Nehari manifold for $\delta > 0$ small, see Proposition \ref{soluçao N^+} ahead.  As a consequence, by using a contradiction argument, we prove that any minimizer for the energy functional restricted to the Nehari manifold is not a semitrivial solution. Hence, we prove our main results without any kind of restriction on the size of parameter $\theta > 0$. In other words, the System  \eqref{sistema Principal} admits existence and multiplicity of solutions which are not semitrivial for each $\theta > 0$. To the best our knowledge, up to now, the present work is the first one proving existence and multiplicity of solutions for the System \eqref{sistema Principal} where the coupling term is superlinear and the concave terms are distinct. Once again we are able to prove that existence of solutions which are not semitrivial due to the behavior of the fibering maps together a fine analysis on the energy functional associated to the System  \eqref{sistema Principal}.

	\section{Assumptions and main theorems}
	As was mentioned in the introduction we shall investigate the existence of positive nontrivial weak solutions $(u, v)$ for the System \eqref{sistema Principal} such that $$\Int \vert u\vert^\alpha \vert v\vert^\beta dx >0$$ which can be done looking for the parameters $\lambda > 0$ and $\theta > 0$. The main objective is to ensure sharp conditions on the parameters $\lambda > 0$ such that the Nehari method can be applied using the nonlinear Rayleigh quotient. Throughout this work, we consider the following set of assumptions:
	
	\begin{itemize}
		\item[$(P)$] It holds $\alpha >1, \beta> 1, 1 \le p \leq q < 2 < \alpha + \beta  < 2^*_s, \theta > 0, \lambda > 0, N > 2s$ and $s \in (0,1), 2^*_s = 2 N/(N - 2 s)$;
		\item [$(V_0)$] The potential $V_1, V_2:\mathbb{R}^N\to \mathbb{R}$ is continuous function and there exists a constant $V_0 > 0$ such that $$V_1(x)\geq V_0 \,\ \ \ \, V_2(x)\geq V_0, \,\ \ \ \, \mbox{for all} \,\, x \in \mathbb{R}^N;$$
		\item [$(V_1)$] Its holds $\frac{1}{V_1} \in  L^1(\mathbb{R}^N)$  and $\frac{1}{V_2} \in L^1(\mathbb{R}^N).$
	\end{itemize}
	
	\begin{Example}
		Recall that the potentials $V_j: \mathbb{R}^N \to \mathbb{R}$, $j = 1, 2$ given by
		$V_j (x) = (1 + x^2)^{\gamma_j}, \gamma_j > N/2$ satisfy our assumption $(V_0)$ and $(V_1)$. In fact, we observe that $V_j(x) \geq 1$ holds for each $x \in \mathbb{R}^N$. Moreover, by using the Co-area Formulae, we show that $\frac{1}{V_j} \in L^1(\mathbb{R}^N), j = 1,2$. Indeed, we mention that 
		\begin{eqnarray*}
			\int\limits_{\mathbb{R}^N}\frac{1}{V_j}dx & = & \int\limits_{\mathbb{R}^N}\frac{1}{(1 + x^2)^{\gamma_j}} dx = C\int_{0}^{\infty}\frac{r^{N - 1}}{(1 + r^2)^{\gamma_j}} dr\\
			&\le& C\int_{0}^{1}\frac{r^{N - 1}}{(1 + r^2)^{\gamma_j}} dr + C\int_{1}^{\infty}r^{N - 1 - 2 \gamma_j} dr  \le C + C \left.r^{N - 2\gamma_j}\right|_{1}^{\infty} < \infty.
		\end{eqnarray*}
	\end{Example}
	At this stage, for each $s \in (0, 1)$,  we mention that the fractional Sobolev space is given by
	$$ H^s(\mathbb{R}^N)=\left \{   u:\mathbb{R}^N \to \mathbb{R};\int_{\mathbb{R}^N}\int_{\mathbb{R}^N}\frac{|u(x)-u(y)|^2}{|x-y|^{N+2s}} dxdy<\infty,
	\int_{\mathbb{R}^N}(-\Delta)^su\varphi dx = \int_{\mathbb{R}^N} u(-\Delta)^s\varphi dx, \\ \forall \varphi \in C^{\infty}_c(\mathbb{R}^N) \right\}. \\
	$$
	Furthermore, for each $s \in (0, 1)$, the fractional Laplacian operator of a measurable function $u: \mathbb{R}^N \to \mathbb{R}$ may be defined by in the following way 
	$$(-\Delta)^su(x) = -\frac{1}{2}C(N,s) \Int\frac{u(x + y) + u(x - y) - 2u(x)}{\vert u \vert^{N + 2s}}dy$$ for all $x \in \mathbb{R}^N$, where $C(N, s) = \left(\Int\frac{1 - cos(\xi)}{\vert \xi \vert ^{N + 2s}}d\xi\right)^{-1}.$
	The working space is defined by $X = X_1 \times X_2$ where
	$$X_1=\left \{u \in H^s(\mathbb{R}^N); \int _{\mathbb{R}^N} V_1(x)u^2dx<\infty \right\}\;\;\; \mbox{and}\;\;\; X_2= \left \{v \in H^s(\mathbb{R}^N); \int _{\mathbb{R}^N} V_2(x)v^2dx<\infty \right\}.$$
	Now, we consider the following set
	\begin{equation} \label{A}
		\mathcal{A}=\left \{ (u, v) \in X; \int_{\mathbb{R}^N} \vert u \vert^\alpha \vert v\vert^\beta dx > 0 \right\}.
	\end{equation}
	It is important to emphasize that $X$ is a Hilbert space endowed with the norm and inner product as follows:
	$$ \Vert(u,v)\Vert ^2=[(u, v)]^2 +\int_{\mathbb{R}^N} V_1(x) u^2 +V_2(x) v^2 dx, (u, v) \in X.$$
	and 
	\begin{eqnarray*}
		\left<(u,v),(\varphi, \psi)\right> & = &\int_{\mathbb{R}^N} \int_{\mathbb{R}^N} \frac{[u(x)-u(y)] [\varphi(x) - \varphi(y)]}{\vert x-y\vert^{N+2s}} dxdy\\ 
		& & + \int_{\mathbb{R}^N} \int_{\mathbb{R}^N} \frac{[v(x)-v(y) ] [\psi(x) - \psi(y)]}{\vert x-y \vert^{N+2s}} dxdy + \int_{\mathbb{R^N}} V_1 u \varphi +V_2  v \psi dx, (u,v) , (\phi, \psi) \in X.
	\end{eqnarray*}
	Throughout this work, we define the term $[(u, v)]^2$ as the Gagliardo semi-norm of the function $(u, v)$ which can be written as follows:
	$$ [(u, v)]^2=[u]^2+[v]^2, (u, v) \in X.$$
	Similarly, the Gagliardo semi-norm for the function $u$ is denoted by
	$$ [u]^2=\int_{\mathbb{R}^N}\int_{\mathbb{R}^N}\frac{[u(x)-u(y)]^2}{|x-y|^{N+2s}} dxdy.$$
	At this stage, we shall use a important tool in order to prove our main results. 
	\begin{lem} \label{chave}
		Let ($V_0$), ($V_1$), and $s \in (0, 1)$ such that $2s < N$. Then there exists a positive constant $C = C(n, p, s)$ such that for all $(u, v) \in X$ we obtain
		$$\Vert (u, v) \Vert_{r_1 \times r_2} \le C \Vert (u, v) \Vert$$
		holds for all $r_1, r_2 \in [1, 2^*_s]$.
		In other words, $X$ is continuously embedded into $L^{r_1}(\mathbb{R}^N) \times L^{r_2}(\mathbb{R}^N)$ for all $r_1, r_2 \in [1, 2^*_s]$. Furthermore, for each $(u_k, v_k)$ bounded sequence in $X$, up to a subsequence, there holds $(u_k, v_k) \to (u, v)$ in $L^{r_1}(\mathbb{R}^N) \times L^{r_2}(\mathbb{R}^N)$ for all $r_1, r_2 \in [1, 2^*_s)$. Hence, $X$ is compactly embedded into $L^{r_1}(\mathbb{R}^N) \times L^{r_2}(\mathbb{R}^N)$ with $r_1, r_2 \in [1, 2^*_s)$.
	\end{lem}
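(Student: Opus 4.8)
The plan is to prove everything one component at a time, since the norm on $X=X_1\times X_2$ splits and the asserted embeddings are into product spaces; thus it suffices to show that $X_1$ is continuously embedded in $L^{r}(\mathbb{R}^N)$ for $r\in[1,2^*_s]$ and compactly embedded for $r\in[1,2^*_s)$, and likewise for $X_2$. First I would record that $(V_0)$ forces $X_1\hookrightarrow H^s(\mathbb{R}^N)$ continuously: indeed $\|u\|_{X_1}^2=[u]^2+\int_{\mathbb{R}^N}V_1u^2\,dx\geq \min\{1,V_0\}\,\|u\|_{H^s}^2$. Combined with the standard fractional Sobolev embedding $H^s(\mathbb{R}^N)\hookrightarrow L^{r}(\mathbb{R}^N)$ valid for $r\in[2,2^*_s]$, this settles the range $r\in[2,2^*_s]$.

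For the lower range $r\in[1,2)$, the hypothesis $(V_1)$ enters through a single Cauchy--Schwarz estimate. Writing $|u|=(|u|\sqrt{V_1})\,V_1^{-1/2}$ yields
\begin{equation*}
\int_{\mathbb{R}^N}|u|\,dx\leq \Big(\int_{\mathbb{R}^N}V_1 u^2\,dx\Big)^{1/2}\Big(\int_{\mathbb{R}^N}\frac{1}{V_1}\,dx\Big)^{1/2}\leq \Big\|\tfrac{1}{V_1}\Big\|_{L^1}^{1/2}\,\|u\|_{X_1},
\end{equation*}
so $X_1\hookrightarrow L^1(\mathbb{R}^N)$ continuously. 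Interpolating between $L^1$ and $L^{2^*_s}$ then gives the continuous embedding for every $r\in[1,2^*_s]$, completing the first assertion.

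The heart of the lemma --- and the step I expect to be the main obstacle --- is the compactness, where the difficulty is the loss of mass at infinity caused by working on all of $\mathbb{R}^N$. I would first prove compactness into $L^1(\mathbb{R}^N)$ and then bootstrap by interpolation. Let $(u_k)$ be bounded in $X_1$; passing to a subsequence, $u_k\rightharpoonup u$ weakly in $X_1$ (hence in $H^s(\mathbb{R}^N)$), and by the local fractional Rellich--Kondrachov theorem $u_k\to u$ strongly in $L^1(B_R)$ for every ball $B_R$. To control the tail I split $\int_{\mathbb{R}^N}|u_k-u|\,dx=\int_{B_R}+\int_{B_R^c}$ and estimate the exterior piece as above, but over $B_R^c$:
\begin{equation*}
\int_{B_R^c}|u_k-u|\,dx\leq\Big(\int_{B_R^c}V_1|u_k-u|^2\,dx\Big)^{1/2}\Big(\int_{B_R^c}\frac{1}{V_1}\,dx\Big)^{1/2}\leq C\Big(\int_{B_R^c}\frac{1}{V_1}\,dx\Big)^{1/2},
\end{equation*}
where $C$ is uniform in $k$ because $(u_k)$ is bounded in $X_1$. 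Since $\tfrac{1}{V_1}\in L^1(\mathbb{R}^N)$ by $(V_1)$, the last factor tends to $0$ as $R\to\infty$; hence given $\varepsilon>0$ one fixes $R$ making the exterior term smaller than $\varepsilon$ uniformly in $k$, and then sends $k\to\infty$ to kill the $B_R$ term. This shows $u_k\to u$ in $L^1(\mathbb{R}^N)$.

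Finally, I would upgrade to $r\in[1,2^*_s)$ by interpolation: since $(u_k)$ is bounded in $L^{2^*_s}(\mathbb{R}^N)$ by the continuous embedding and $u_k\to u$ in $L^1(\mathbb{R}^N)$, for $\tfrac{1}{r}=t+\tfrac{1-t}{2^*_s}$ with $t\in(0,1]$ one has $\|u_k-u\|_{L^r}\leq\|u_k-u\|_{L^1}^{t}\,\|u_k-u\|_{L^{2^*_s}}^{1-t}\to 0$. Carrying this out for both factors $X_1$ and $X_2$ and combining the two convergences gives $(u_k,v_k)\to(u,v)$ in $L^{r_1}(\mathbb{R}^N)\times L^{r_2}(\mathbb{R}^N)$ for all $r_1,r_2\in[1,2^*_s)$, which is the claimed compact embedding. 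The only delicate point throughout is that $(V_1)$ is precisely what converts the bound on $\int_{\mathbb{R}^N}V_1|u|^2\,dx$ into uniform smallness of the tails, so it should be flagged as the mechanism replacing the missing compactness of $H^s(\mathbb{R}^N)\hookrightarrow L^r(\mathbb{R}^N)$ on the whole space.
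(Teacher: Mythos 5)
Your proof is correct, and there is nothing in the paper to compare it against: the paper states Lemma \ref{chave} without any proof, merely presenting it as ``an important tool'' and, in the remark that follows, deferring to the literature (the reference \cite{wang} of Bartsch--Wang, where compactness is obtained under related conditions on the potential). Your argument is exactly the standard one that underlies such statements, and it correctly identifies the mechanism: $(V_0)$ gives $X_i\hookrightarrow H^s(\mathbb{R}^N)$ and hence the range $r\in[2,2^*_s]$ via the fractional Sobolev embedding, while $(V_1)$ enters through the Cauchy--Schwarz estimate $\int|u|\,dx\le\bigl(\int V_1u^2\,dx\bigr)^{1/2}\bigl(\int V_1^{-1}\,dx\bigr)^{1/2}$ to cover $r\in[1,2)$ and, crucially, to make the tails $\int_{B_R^c}|u_k-u|\,dx$ small uniformly in $k$ (using absolute continuity of the integral of $V_1^{-1}\in L^1$), which is what restores the compactness lost on the whole space; interpolation with the bounded $L^{2^*_s}$ norms then upgrades $L^1$-convergence to $L^r$-convergence for all $r\in[1,2^*_s)$. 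Two harmless bookkeeping points you may wish to make explicit: the weak limit $u$ lies in $X_1$ because $X_1$ is a Hilbert space, and the local strong convergence $u_k\to u$ in $L^1(B_R)$ holds for the full weakly convergent subsequence (not just a further subsequence) since every $L^1(B_R)$-convergent sub-subsequence must have the weak limit $u$ as its limit; alternatively a diagonal extraction over $R\in\mathbb{N}$ settles it. With these trivialities noted, your proof is complete and fills a gap the paper leaves to the reader.
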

	
	\begin{rmk}
		It is important to stress that using more general hypothesis on the potential $V$ we can recovery the compactness of X into the Lebesgue spaces $L^r(\mathbb{R}^N)$ with $r \in [2, 2^*_s]$ with $s \in (0,1)$ and $2s < N$. Namely, we can consider also coercive potentials $V$, that is, we assume that 
		\begin{equation}
			\lim_{|x| \to \infty} V(x) = + \infty.
		\end{equation}
		For more general results on this subject we refer the reader to \cite{wang}.
	\end{rmk}
	The energy functional $E_{_\lambda}: X \to \mathbb{R}$ associated with System (\ref{sistema Principal}) is defined as follows:
	\begin{equation}\label{Ener} 
		E_{_\lambda}(u,v)=\frac{1}{2}\Vert(u,v)\Vert ^2 - \frac{\lambda}{p}{\Vert u \Vert}_{p}^{p}-\frac{\lambda}{q}{\Vert v \Vert}_{q}^{q} -\frac{\theta}{\alpha + \beta}\int_{\mathbb{R}^N} \vert u \vert^\alpha \vert v\vert^\beta dx \ \ (u,v)\in X = X_1 \times X_2.
	\end{equation}
	Under our hypotheses, we observe that $E_{\lambda}$ belongs to $C^1(X, \mathbb{R})$. Moreover, the pair $(u, v) \in X$ is a critical point for the functional $E_{\lambda}$ if, and only if, $(u, v)$ is a weak solution to the elliptic System  (\ref{sistema Principal}). Furthermore, the Gateaux derivative for $E_{\lambda}$ is given by
	\begin{eqnarray}
		E'_{_\lambda}(u,v)(\varphi, \psi) & = & \left<(u,v),(\varphi, \psi)\right> - \lambda\int_{\mathbb{R}^N} |u|^{p - 2}u\varphi dx -\lambda \int_{\mathbb{R}^N} \vert v\vert^{q-2}v\psi dx \nonumber\\
		&&-\frac{\theta}{\alpha + \beta}\alpha \int_{\mathbb{R}^N} \vert u \vert^{\alpha - 2}u\varphi \vert v \vert^\beta dx - \frac{\theta}{\alpha + \beta}\beta \int_{\mathbb{R}^N} \vert u \vert^{\alpha} \vert v \vert^{\beta-2}v \psi dx, (\varphi, \psi) \in X.
	\end{eqnarray}
	In particular, we obtain that 
	\begin{equation}\label{der ener} 
		E'_{_\lambda}(u,v)(u,v) = \Vert(u,v)\Vert ^2- \lambda \Vert u \Vert _{p} ^{p} - \lambda \Vert v \Vert _{q} ^{q} -\theta \int_{\mathbb{R^N}} |u|^\alpha |v|^\beta dx, \, \, (u, v ) \in X.
	\end{equation}
	Moreover, we observe that 
	\begin{equation}\label{der seg ener} 
		E''_{_\lambda}(u,v)(u,v)^2 = \Vert(u,v)\Vert ^2- \lambda (p -1) \Vert u \Vert _{p} ^{p} - \lambda (q - 1) \Vert v \Vert _{q} ^{q} -\theta (\alpha + \beta)\int_{\mathbb{R^N}} |u|^\alpha |v|^\beta dx, \,\,  (u,v) \in X.
	\end{equation}

	Now, we shall consider the Nehari method for our main System  (\ref{sistema Principal}) as follows:
	\begin{eqnarray}\label{Nehari}
		\mathcal {N}_{\lambda}& = & \left\{ (u, v) \in X\backslash {\{(0, 0)\}}; \lambda\Vert u \Vert_p^p + \lambda \Vert v \Vert_q^q = \Vert(u, v)\Vert^2 - \int_{\mathbb{R}^N} \vert u \vert^\alpha \vert v\vert^\beta dx \right\}.
	\end{eqnarray}
	Hence, we can split the Nehari manifold $\mathcal{N}_{\lambda}$ into three disjoint subsets in the following way:
	\begin{equation}\label{N+} 
		\mathcal {N}^+_{\lambda} = \{(u, v) \in \mathcal {N}_{\lambda}; E''_{\lambda}(u, v)(u, v)^2 > 0\},
	\end{equation}
	\begin{equation}\label{N-} 
		\mathcal {N}^-_{\lambda} = \{(u, v) \in \mathcal {N}_{\lambda}; E''_{\lambda}(u, v)(u, v)^2 < 0\},
	\end{equation}
	\begin{equation}\label{N0} 
		\mathcal {N}^0_{\lambda} = \{(u, v) \in \mathcal {N}_{\lambda}; E''_{\lambda}(u, v)(u, v)^2 = 0\}.
	\end{equation}
	The main objective in the present work is to find weak solutions for our main problem using for the following minimization problems:
	\begin{equation}\label{CN-} 
		C_{\mathcal{N}^-_{\lambda} \cap \mathcal{A}} = \inf_{(u, v) \in \mathcal{N}^-_{\lambda} \cap \mathcal{A}}E_{_\lambda}(u, v),
	\end{equation}
	\begin{equation}\label{CN+} 
		C_{\mathcal{N}^+_{\lambda} \cap \mathcal{A}} = \inf_{(u, v) \in \mathcal{N}^+_{\lambda} \cap \mathcal{A}}E_{_\lambda}(u, v).
	\end{equation}
	In other words, we shall prove that $C_{\mathcal{N}^-_{\lambda} \cap \mathcal{A} }$ and $C_{\mathcal{N}^+_{\lambda} \cap \mathcal{A}}$ are attained.	
	It is important to stress that
	$$(u, v) \in \mathcal{N}_{\lambda} \ \ \Leftrightarrow \ \  \lambda =  \frac{\Vert(u,v)\Vert ^2 -\theta  \int_\mathbb{R^N} |u|^\alpha |v|^\beta dx}{\Vert u \Vert _{p} ^{p} + \Vert v \Vert _{q} ^{q}}$$
	and  
	$$ E_{_\lambda}(u,v)=0  \ \ \Leftrightarrow \ \       \lambda = \frac{\frac{1}{2}\Vert(u,v)\Vert ^2-\frac{\theta}{\alpha+\beta}\int_{\mathbb{R}^N} |u|^\alpha|v|^\beta dx}{ \frac{1}{p}{\Vert u \Vert}_{p}^{p}+\frac{1}{q}{\Vert v \Vert}_{q}^{q}}.$$ 
	
	\begin{rmk}\label{N- cont A}
		Let $\mathcal{N}^-_{\lambda}$ and $\mathcal{A}$ be defined by \eqref{Nehari} and \eqref{A}, respectively. As a consequence, we obtain that 
		$\mathcal{N}^-_{\lambda} \subset \mathcal{A}.$
		It is straightforward to verify the last assertion arguing by contradiction. Suppose there exists $(u, v) \in \mathcal{N}^{-}_{\lambda}$ such that $(u, v) \notin \mathcal{A}$. In other words, $\int_{\mathbb{R^N}} |u|^\alpha |v|^\beta dx = 0$. Hence, we obtain that 
		$$E''_{_{\lambda}}(u, v)(u, v) = 2\Vert(u,v)\Vert ^2- \lambda p \Vert u \Vert _{p} ^{p} - \lambda q\Vert v \Vert _{q} ^{q} < 0 $$
		However, by using \eqref{Nehari}, we see that
		$\Vert(u, v) \Vert^2 = \lambda(\Vert u \Vert^p_p + \Vert v \Vert^q_q)$
		which implies
		$\lambda (2 - p)\Vert u \Vert^p_p + \lambda(2 - q)\Vert v \Vert^q_q < 0.$
		This leads to a contradiction proving that $\mathcal{N}^-_{\lambda} \subset \mathcal{A}$.
	\end{rmk}
	
	Now, we shall consider the functionals $R_n, R_e: \mathcal{A} : \to \mathbb{R}$ associated with the parameter $\lambda > 0$ in the following form
	\begin{equation}\label{R_n} 
		R_n(u,v)=\frac{\Vert(u,v)\Vert ^2 -\theta  \int_\mathbb{R^N} |u|^\alpha |v|^\beta dx}{ \Vert u \Vert _{p} ^{p} + \Vert v \Vert _{q} ^{q}}
	\end{equation}
	and
	\begin{equation}\label{R_e1} 
		R_e(u, v) =\frac{\frac{1}{2}\Vert(u,v)\Vert ^2-\frac{\theta}{\alpha+\beta}\int_{\mathbb{R}^N} \vert u\vert^\alpha \vert v \vert^\beta dx}{ \frac{1}{p}{\Vert u \Vert}_{p}^{p}+\frac{1}{q}{\Vert v \Vert}_{q}^{q}}
	\end{equation}
	Furthermore, we shall consider the following extremals
	\begin{equation}\label{Lambda_n } 
		\Lambda_n(u, v) = \sup_{t>0} R_n(tu, tv), \ \ \ \ \ \ \lambda^* = \inf_{(u, v) \in \mathcal{A}} \Lambda_n(u, v), 
	\end{equation}
	\begin{equation}\label{Lambda_e } 
		\Lambda_e (u, v) = \sup_{t>0} R_e(tu, tv), \ \ \ \ \ \ \lambda_* = \inf_{(u, v) \in \mathcal{A}} \Lambda_e(u, v).
	\end{equation}
	It is important to stress that $R_n$, $R_e$ belongs to $C^1(\mathcal{A}, \mathbb{R})$. Similarly, we observe that $\Lambda_n$ and $\Lambda_e$ are in $C^1(\mathcal{A}, \mathbb{R})$.	
	Under our assumptions, the energy functional $E_\lambda$ is bounded from below in $\mathcal{N}_{\lambda}$. Hence, we can consider the minimization problems given by \eqref{CN-} and \eqref{CN+}. 
	
	Our main objective in the present work is to find the largest positive value $\lambda^*> 0$ such that for each $0 < \lambda < \lambda^*$ the set $\mathcal{N}^0_{\lambda^*}$ is empty. This feature enables us to employ the Lagrange Multiplier Theorem. Our approach relies on an investigation of the existence and multiplicity of solutions for the System \eqref{sistema Principal} using the Nehari method together with the nonlinear Rayleigh quotient method. Under these conditions, we can state our main first result as follows:
	\begin{theorem}\label{teorem1}
		Suppose (P), ($V_0$) and ($V_1$). Then, $0 < \lambda_* < \lambda^* <\infty$. Furthermore, for each $\lambda \in (0, \lambda^*)$ the System (\ref{sistema Principal}) admits at least a weak solution $(u, v) \in \mathcal{A}$ for each $\theta > 0$. Furthermore, $(u,v)$ satisfies the following statements:
		\begin{itemize}
			\item[i)] $E''_{_\lambda}(u, v)(u, v)^2 > 0$, that is, $(u, v) \in \mathcal{N}^+_\lambda  \cap \mathcal{A}$; 
			\item[ii)] There exists $C > 0$ such that $E_{_\lambda}(u, v) \leq - C$.
		\end{itemize}
	\end{theorem}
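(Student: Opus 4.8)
The plan is to prove Theorem \ref{teorem1} in three stages: first establish the geometric and variational properties of the extremals $\lambda_*$ and $\lambda^*$; then show that for $\lambda \in (0,\lambda^*)$ the degenerate set $\mathcal{N}^0_\lambda$ is empty so that $\mathcal{N}^+_\lambda \cap \mathcal{A}$ and $\mathcal{N}^-_\lambda \cap \mathcal{A}$ are genuine $C^1$-manifolds; and finally minimize the energy over $\mathcal{N}^+_\lambda \cap \mathcal{A}$ to produce the desired solution.

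\textbf{Step 1: the ordering $0<\lambda_*<\lambda^*<\infty$ and the fibering analysis.} First I would fix $(u,v)\in\mathcal{A}$ and study the fibering map $t\mapsto E_\lambda(tu,tv)$, whose critical points are exactly the projections onto $\mathcal{N}_\lambda$. Because $1\le p\le q<2<\alpha+\beta$, the map $t\mapsto R_n(tu,tv)$ has the form (concave term)$\,t^{2-p}$-type behavior competing against the superlinear coupling, and one checks it has a unique interior maximum; this gives $\Lambda_n(u,v)$ as a well-defined positive number, and the analogous statement for $R_e$ gives $\Lambda_e(u,v)$. The strict inequality $\lambda_*<\lambda^*$ should follow by comparing the two Rayleigh quotients pointwise using $\frac1p>\frac12$ and $\frac{1}{\alpha+\beta}<\frac12$, which forces $R_e<R_n$ along fibers in the relevant regime, hence $\Lambda_e<\Lambda_n$ and then $\lambda_*<\lambda^*$ after taking infima. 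Finiteness $\lambda^*<\infty$ comes from evaluating $\Lambda_n$ at a single test pair in $\mathcal{A}$, and positivity $\lambda_*>0$ from the continuous embedding of Lemma \ref{chave} bounding the concave terms by the norm.

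\textbf{Step 2: emptiness of $\mathcal{N}^0_\lambda$ and the two-projection structure.} The key claim is that for every $\lambda\in(0,\lambda^*)$ and every $(u,v)\in\mathcal{A}$ the fibering map admits exactly two critical points $t^+<t^-$, with $(t^+u,t^+v)\in\mathcal{N}^+_\lambda$ a local minimum and $(t^-u,t^-v)\in\mathcal{N}^-_\lambda$ a local maximum, while $\mathcal{N}^0_\lambda=\emptyset$. I would prove this by the nonlinear Rayleigh quotient: the definition $\Lambda_n(u,v)=\sup_{t>0}R_n(tu,tv)$ means that the equation $R_n(tu,tv)=\lambda$ has two solutions in $t$ precisely when $\lambda<\Lambda_n(u,v)$, and since $\lambda<\lambda^*\le\Lambda_n(u,v)$ for all $(u,v)\in\mathcal{A}$, the two projections always exist and are nondegenerate (the degenerate case corresponds to the maximizer, i.e. $\lambda=\Lambda_n(u,v)$, which is excluded). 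This is the step where the open set $\mathcal{A}$ and the Rayleigh-quotient machinery do the real work, replacing the usual smallness-of-$\theta$ hypothesis. Once $\mathcal{N}^0_\lambda=\emptyset$, each minimizer is a constrained critical point and the Lagrange Multiplier Theorem yields a genuine weak solution.

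\textbf{Step 3: attainment of $C_{\mathcal{N}^+_\lambda\cap\mathcal{A}}$ and its sign.} I would take a minimizing sequence $(u_k,v_k)$ for \eqref{CN+}; coercivity of $E_\lambda$ on $\mathcal{N}_\lambda$ (which follows because on $\mathcal{N}_\lambda$ the energy is controlled below by the norm, using $2<\alpha+\beta$) gives boundedness in $X$, so by the compact embedding in Lemma \ref{chave} we extract a limit $(u,v)$ with strong $L^{r_1}\times L^{r_2}$ convergence for the concave and coupling integrals. The delicate point is to show the weak limit lies in $\mathcal{N}^+_\lambda\cap\mathcal{A}$ rather than on the boundary or outside $\mathcal{A}$: here I would argue that $E_\lambda(u,v)<0$ on $\mathcal{N}^+_\lambda\cap\mathcal{A}$ with a uniform negative bound (giving part (ii), $E_\lambda(u,v)\le-C$), which prevents the limit from degenerating to the trivial pair and, combined with the strong convergence of the subcritical integrals, forces $\int|u|^\alpha|v|^\beta>0$ so that $(u,v)\in\mathcal{A}$; the two-projection structure of Step 2 then pins the limit in $\mathcal{N}^+_\lambda$. \textbf{The hard part will be} exactly this lower semicontinuity-plus-nondegeneracy argument at the $\mathcal{N}^+$ level: one must rule out that the minimizing sequence slides toward $\mathcal{N}^0_\lambda$ or loses mass into a semitrivial configuration, and this is where the strict separation $\lambda<\lambda^*$ and the uniform energy gap $E_\lambda\le-C$ are indispensable. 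Finally, positivity of the solution follows by replacing $(u,v)$ with $(|u|,|v|)$, which leaves all the integrals in $E_\lambda$ unchanged while not increasing the Gagliardo seminorm, so the minimizer may be taken positive.
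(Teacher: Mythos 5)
There is a genuine gap, and it sits at the heart of what makes this paper's result nontrivial. In Step 1, your claim that $R_e<R_n$ pointwise along fibers is false for $t>t_e(u,v)$: by Proposition \ref{lema_naruprop}, $Q_n(t)-Q_e(t)$ carries the sign of $\frac{dQ_e}{dt}(t)$, so $Q_n<Q_e$ past the maximizer $t_e$ and $Q_n(t_e)=Q_e(t_e)$. The correct route to the pointwise strict inequality is $\Lambda_e(u,v)=Q_e(t_e)=Q_n(t_e)<Q_n(t_n)=\Lambda_n(u,v)$, comparing $Q_n$ at $t_e$ against its unique maximum at $t_n$. More seriously, passing a strict pointwise inequality to infima only yields $\lambda_*\le\lambda^*$; to obtain $\lambda_*<\lambda^*$ the paper first proves that $\lambda^*$ is \emph{attained} at some $(u^*,v^*)\in\mathcal{A}$ (Propositions \ref{Limitada}, \ref{tilde Longe de zero}, \ref{lambda frac Sem Inf} and \ref{lambda*ating}, which require bounded minimizing sequences via the identities of Remark \ref{integral limitada tilde u}, norms bounded away from zero, and weak lower semicontinuity) and only then concludes $\lambda^*=\Lambda_n(u^*,v^*)>\Lambda_e(u^*,v^*)\ge\lambda_*$. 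Your proposal omits this attainment step entirely, so the strict separation $\lambda_*<\lambda^*$ asserted in the theorem is not established.

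The second and larger gap is in Step 3: the assertion that the uniform bound $E_\lambda\le -C<0$ together with strong subcritical convergence "forces $\int_{\mathbb{R}^N}|u|^\alpha|v|^\beta\,dx>0$" is wrong. Semitrivial pairs $(u,0)$ also lie in $\mathcal{N}^+_\lambda$ with strictly negative energy, since the decoupled concave scalar problem $(-\Delta)^su+V_1(x)u=\lambda|u|^{p-2}u$ admits negative-energy solutions, and nothing in your argument prevents the coupling integral from vanishing along the minimizing sequence. Contrast with $\mathcal{N}^-_\lambda$, where \eqref{ineq norma e integ} yields a uniform positive lower bound on the coupling term (Proposition \ref{chave2}); no analogous bound holds on $\mathcal{N}^+_\lambda$, which is exactly why the paper needs a separate device. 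In Proposition \ref{soluçao N^+} the authors suppose the minimizer is $(u,0)$, apply the Implicit Function Theorem to $F(t,s)=\frac{d}{dt}E_\lambda(tu,tsv)$, using $\frac{\partial F}{\partial t}(1,0)=\lambda(2-p)\Vert u\Vert_p^p>0$, to build a curve $s\mapsto (t(s)u,t(s)sv)\in\mathcal{N}^+_\lambda$, and then show that $G(s)=E_\lambda(t(s)u,t(s)sv)$ satisfies $G'(s)<0$ for small $s>0$ because $q<2$ makes the concave term dominate; this strictly undercuts $C_{\mathcal{N}^+_\lambda\cap\mathcal{A}}$, a contradiction. This curve construction is the paper's central innovation (it is what removes any smallness condition on $\theta$), and without it or an equivalent your weak limit may be semitrivial, so the conclusion $(u,v)\in\mathcal{A}$ fails. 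Finally, your positivity argument is incomplete on two counts: $(|u|,|v|)$ need not lie on $\mathcal{N}^+_\lambda$ (one must reproject via $t_n^+(|u_k|,|v_k|)$ as in Proposition \ref{mini nao negat}), and the modulus trick gives only $u,v\ge 0$; strict positivity requires the regularity of Proposition \ref{regular} plus the strong maximum principle as in Proposition \ref{sol positivas}.
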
	
	Similarly, we can also prove the following main result:
	\begin{theorem}\label{teorem2}
		Suppose (P), ($V_0$) and ($V_1$). Then,  for each $\lambda \in (0, \lambda^*)$, the System \eqref{sistema Principal} admits at least a weak solution $(z, w) \in \mathcal{A}$ for each $\theta > 0$. Moreover, $(u,v)$ satisfies the following assertions: 
		\begin{itemize}
			\item[i)] $E''_{_\lambda}(z, w)(z, w)^2 < 0$, that is, $(z, w) \in \mathcal{N}^-_\lambda$;
			\item[ii)] For each $\lambda \in (0, \lambda_*)$ we obtain that $E_{_\lambda}(z, w) > 0$;
			\item[iii)] For $\lambda = \lambda_*$ it follows that $E_{_\lambda}(z, w) = 0$;
			\item[iv)] For each $\lambda \in (\lambda_*, \lambda^*)$ we obtain that $E_{_\lambda}(z, w) < 0$.
		\end{itemize}
	\end{theorem}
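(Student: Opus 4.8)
The plan is to realise the second solution as a minimizer of $E_\lambda$ on $\mathcal{N}^-_\lambda$; note that $\mathcal{N}^-_\lambda\cap\mathcal{A}=\mathcal{N}^-_\lambda$ by Remark \ref{N- cont A}, so $C_{\mathcal{N}^-_\lambda\cap\mathcal{A}}=C_{\mathcal{N}^-_\lambda}$. First I would record two consequences of the constraint \eqref{Nehari}: eliminating $\|(u,v)\|^2$ in \eqref{Ener} gives, for $(u,v)\in\mathcal{N}_\lambda$,
\begin{equation*}
E_\lambda(u,v)=\Big(\tfrac12-\tfrac1{\alpha+\beta}\Big)\|(u,v)\|^2-\lambda\Big(\tfrac1p-\tfrac1{\alpha+\beta}\Big)\|u\|_p^p-\lambda\Big(\tfrac1q-\tfrac1{\alpha+\beta}\Big)\|v\|_q^q,
\end{equation*}
while eliminating $\|(u,v)\|^2$ in \eqref{der seg ener} turns $E''_\lambda(u,v)(u,v)^2<0$ into
\begin{equation*}
\theta(\alpha+\beta-1)\int_{\mathbb{R}^N}|u|^\alpha|v|^\beta\,dx>\lambda(2-p)\|u\|_p^p+\lambda(2-q)\|v\|_q^q.\tag{$\star$}
\end{equation*}
Since $1\le p\le q<2<\alpha+\beta$, all three coefficients in the first identity are positive and the quadratic term dominates at infinity, so with Lemma \ref{chave} the functional $E_\lambda$ is coercive and bounded below on $\mathcal{N}_\lambda$; hence $C_{\mathcal{N}^-_\lambda}$ is finite and every minimizing sequence is bounded in $X$.

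The decisive and most delicate step is a uniform lower bound $\|(u,v)\|\ge\rho>0$, valid for every $(u,v)\in\mathcal{N}^-_\lambda$ and independent of $\lambda$. I would extract it from $(\star)$ by estimating its left-hand side through Lemma \ref{chave}, $\int_{\mathbb{R}^N}|u|^\alpha|v|^\beta\,dx\le C_S\|(u,v)\|^{\alpha+\beta}$, and its right-hand side from below using $2-p\ge 2-q>0$ together with \eqref{Nehari}, namely $\lambda(2-p)\|u\|_p^p+\lambda(2-q)\|v\|_q^q\ge(2-q)\big(\|(u,v)\|^2-\theta\int_{\mathbb{R}^N}|u|^\alpha|v|^\beta\,dx\big)$; combining and dividing by $\|(u,v)\|^2$ leaves $\|(u,v)\|^{\alpha+\beta-2}>c(q,\alpha,\beta,\theta,C_S)>0$. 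With this bound a minimizing sequence $(u_k,v_k)$ admits $(u_k,v_k)\rightharpoonup(z,w)$ with $\|u_k\|_p\to\|z\|_p$, $\|v_k\|_q\to\|w\|_q$ and, since $\alpha+\beta<2^*_s$, $\int|u_k|^\alpha|v_k|^\beta\,dx\to\int|z|^\alpha|w|^\beta\,dx$. Were $(z,w)=(0,0)$, the constraint \eqref{Nehari} would force $\|(u_k,v_k)\|\to0$, contradicting $\rho$; hence $(z,w)\ne(0,0)$. Passing to the limit in $(\star)$ gives $\theta(\alpha+\beta-1)\int|z|^\alpha|w|^\beta\,dx\ge\lambda(2-p)\|z\|_p^p+\lambda(2-q)\|w\|_q^q$, whose right-hand side is strictly positive once $(z,w)\ne(0,0)$, so $\int|z|^\alpha|w|^\beta\,dx>0$ and $(z,w)\in\mathcal{A}$. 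It is exactly this single inequality that forbids semitrivial limits $(z,0)$ or $(0,w)$ with no smallness assumption on $\theta$.

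To conclude existence I would project the limit: since $(z,w)\in\mathcal{A}$ and $\lambda<\lambda^*\le\Lambda_n(z,w)$, the fibering map $t\mapsto E_\lambda(tz,tw)$ has a unique maximizing projection $t^-$ with $(t^-z,t^-w)\in\mathcal{N}^-_\lambda$ (Proposition \ref{soluçao N^+}). Comparing the energy of this projection with $C_{\mathcal{N}^-_\lambda}$ through weak lower semicontinuity and a Brezis--Lieb splitting forces the infimum to be attained with strong convergence, so $(z,w)$ itself minimizes over $\mathcal{N}^-_\lambda$. Because $\lambda<\lambda^*$ guarantees $\mathcal{N}^0_\lambda=\emptyset$, the constraint is non-degenerate at $(z,w)$, so the Lagrange multiplier theorem shows $(z,w)$ is a weak solution of \eqref{sistema Principal}, and the strict sign of $E''_\lambda$ survives in the limit (it cannot vanish, since $\mathcal{N}^0_\lambda=\emptyset$), giving (i). Replacing $(z,w)$ by $(|z|,|w|)$ lowers neither the energy nor the constraint and preserves membership in $\mathcal{N}^-_\lambda\cap\mathcal{A}$, so the strong maximum principle for $(-\Delta)^s$ yields a positive solution.

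For the energy signs (ii)--(iv) the key is the factorization, valid along each ray,
\begin{equation*}
E_\lambda(tz,tw)=\Big(\tfrac1p t^p\|z\|_p^p+\tfrac1q t^q\|w\|_q^q\Big)\big(R_e(tz,tw)-\lambda\big),
\end{equation*}
with $R_e$ as in \eqref{R_e1}, so that the sign of $E_\lambda$ along the ray is the sign of $R_e-\lambda$. Differentiating at $t^-$ and using $\frac{d}{dt}E_\lambda(tz,tw)\big|_{t^-}=0$ shows that $E_\lambda(t^-z,t^-w)=0$ holds if and only if $t^-$ is the maximum point of $t\mapsto R_e(tz,tw)$, that is, if and only if $\lambda=\Lambda_e(z,w)$; hence along each direction the $\mathcal{N}^-_\lambda$-energy is positive, zero, or negative according as $\lambda<\Lambda_e$, $\lambda=\Lambda_e$, or $\lambda>\Lambda_e$. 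Taking the infimum over $\mathcal{A}$ and recalling $\lambda_*=\inf_{(u,v)\in\mathcal{A}}\Lambda_e(u,v)$, this gives $E_\lambda(z,w)>0$ for $\lambda\in(0,\lambda_*)$, proving (ii); $E_\lambda(z,w)<0$ for $\lambda\in(\lambda_*,\lambda^*)$, proving (iv); and at $\lambda=\lambda_*$, after showing that the infimum defining $\lambda_*$ is attained, an extremal direction furnishes a minimizer with $E_{\lambda_*}(z,w)=0$, proving (iii). I expect the attainment of $\lambda_*$ and the non-degeneration of the minimizing sequence precisely at this threshold (where the minimal energy vanishes) to be the subtlest point, again controlled by the lower bound $\rho$ and by $(\star)$.
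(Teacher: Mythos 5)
Your proposal is correct and takes essentially the same route as the paper: existence through minimization of $E_\lambda$ over $\mathcal{N}^-_\lambda$ with coercivity, the uniform norm lower bound, nontriviality of the weak limit forcing $(z,w)\in\mathcal{A}$, strong convergence and the Lagrange multiplier argument (Propositions \ref{E coersiva}, \ref{Nehari - longe de zero}, \ref{chave2}, \ref{soluçao N-}), and the signs (ii)--(iv) through your factorization identity, which is exactly Remark \ref{obs rel Re e E}, combined with comparing $t_n^-$ against $t_e$ (equivalently $\lambda$ against $\Lambda_e$) and the attainment of $\lambda_*$ (Proposition \ref{lambda_*ating}), precisely as in the paper's proof of Theorem \ref{teorem2}. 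The only deviations are cosmetic: your coefficient $\theta(\alpha+\beta-1)$ in $(\star)$ becomes $\theta(\alpha+\beta-2)$ if the second fibering derivative is computed exactly (harmless, since $\alpha+\beta>2$), and the paper obtains strong convergence of the minimizing sequence from weak convergence plus convergence of norms in the Hilbert space $X$, with no need of a Brezis--Lieb splitting.
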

	
	\begin{theorem}\label{teorem3}
		Suppose (P), ($V_0$) and ($V_1$). Then the System \eqref{sistema Principal} has at least two weak solutions $(u, v)$ and $(z, w)$ for each $\lambda \in (0, \lambda^*)$ and for each $\theta > 0$. Furthermore, the functions $u$, $v$, $z$, and $w$ are strictly positive in $\mathbb{R}^N$.
	\end{theorem}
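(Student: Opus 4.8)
The plan is to deduce Theorem \ref{teorem3} by combining Theorems \ref{teorem1} and \ref{teorem2} and then upgrading the two resulting solutions to strictly positive ones. First I would fix $\lambda \in (0,\lambda^*)$ and $\theta>0$, invoke Theorem \ref{teorem1} to produce a minimizer $(u,v)$ of $E_{\lambda}$ on $\mathcal{N}^+_{\lambda}\cap\mathcal{A}$, and invoke Theorem \ref{teorem2} to produce a minimizer $(z,w)$ of $E_{\lambda}$ on $\mathcal{N}^-_{\lambda}\cap\mathcal{A}$; by those theorems both are weak solutions of \eqref{sistema Principal}. These two solutions are automatically distinct: by \eqref{N+} and \eqref{N-} the sign of $E''_{\lambda}(\cdot)(\cdot)^2$ separates $\mathcal{N}^+_{\lambda}$ from $\mathcal{N}^-_{\lambda}$, so no pair can lie in both. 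Moreover, membership in $\mathcal{A}$ forces $\int_{\mathbb{R}^N}|u|^\alpha|v|^\beta\,dx>0$ (for $(z,w)$ this is automatic by Remark \ref{N- cont A}), so neither component of either solution can vanish identically; in particular both solutions are non-semitrivial, which is precisely the feature needed below.

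Next I would show that the minimizers may be chosen nonnegative. The key observation is the pointwise inequality $\bigl||u(x)|-|u(y)|\bigr|\le |u(x)-u(y)|$, which gives $[\,|u|\,]\le[u]$ and hence $\Vert(|u|,|v|)\Vert\le\Vert(u,v)\Vert$, while all remaining terms of $E_{\lambda}$ are invariant under $u\mapsto|u|$, since $\Vert|u|\Vert_p=\Vert u\Vert_p$, $\int_{\mathbb{R}^N}V_1|u|^2\,dx=\int_{\mathbb{R}^N}V_1u^2\,dx$ and $\int_{\mathbb{R}^N}|u|^\alpha|v|^\beta\,dx$ is unchanged. Consequently $E_{\lambda}(t|u|,t|v|)\le E_{\lambda}(tu,tv)$ for every $t>0$, and $(|u|,|v|)\in\mathcal{A}$. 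Using the fibering and projection analysis underlying Theorems \ref{teorem1} and \ref{teorem2}, I would project $(|u|,|v|)$ onto $\mathcal{N}^+_{\lambda}$ (respectively onto $\mathcal{N}^-_{\lambda}$) and compare energies: the inequality above produces a value no larger than the infimum $C_{\mathcal{N}^+_{\lambda}\cap\mathcal{A}}$ (respectively $C_{\mathcal{N}^-_{\lambda}\cap\mathcal{A}}$), so the projected nonnegative pair is again a minimizer, hence again a weak solution. Therefore I may assume $u,v,z,w\ge 0$.

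Finally I would promote nonnegativity to strict positivity by the strong maximum principle for the fractional Laplacian. With $u,v\ge 0$ the right-hand side of the first equation in \eqref{sistema Principal}, namely $\lambda\,u^{p-1}+\frac{\alpha}{\alpha+\beta}\theta\,u^{\alpha-1}v^{\beta}$, is nonnegative, so $u$ satisfies $(-\Delta)^s u+V_1(x)u\ge 0$ in the weak sense; as $u\ge 0$ and $u\not\equiv 0$ by the non-semitriviality established above, the nonlocal strong maximum principle yields $u>0$ in $\mathbb{R}^N$. Applying the same argument to $v$, $z$ and $w$ gives $v,z,w>0$, which is the claimed positivity. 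If a genuinely pointwise statement is required, a preliminary $L^\infty$ bound obtained by Moser-type iteration together with the regularity theory for $(-\Delta)^s$ upgrades the conclusion to positivity everywhere.

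The main obstacle I expect lies in the second step: one must verify that replacing a minimizer by the projection of its modulus keeps the pair in the correct component $\mathcal{N}^+_{\lambda}\cap\mathcal{A}$ or $\mathcal{N}^-_{\lambda}\cap\mathcal{A}$ without overshooting the infimum, which relies delicately on the absence of inflection points of the fibering maps for $\lambda<\lambda^*$ and on the two-projection structure of $\mathcal{A}$. A secondary technical point is justifying the fractional strong maximum principle in the unbounded setting with the potentials $V_1,V_2$ of assumptions $(V_0)$ and $(V_1)$, where a purely weak formulation rather than a pointwise one must be employed.
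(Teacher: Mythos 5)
Your proposal is correct and takes essentially the same route as the paper, whose proof of Theorem \ref{teorem3} likewise combines the two minimizers on $\mathcal{N}^{-}_{\lambda}$ and $\mathcal{N}^{+}_{\lambda}\cap\mathcal{A}$, separates them by the disjointness of the two Nehari components, and derives strict positivity exactly as you outline: modulus plus fibering-map projection to obtain nonnegative minimizers (Proposition \ref{mini nao negat}), then $L^{\infty}$/H\"older regularity and the fractional strong maximum principle (Propositions \ref{regular} and \ref{sol positivas}). The delicate point you flag at the end is settled in the paper by comparing fibering-map derivatives, $E'_{\lambda}(t|u|,t|v|)(t|u|,t|v|)\le E'_{\lambda}(tu,tv)(tu,tv)$, which forces $t_{n}^{+}(|u|,|v|)\ge 1$ (resp.\ $t_{n}^{-}(|u|,|v|)\in[t_{n}^{+}(u_k,v_k),t_{n}^{-}(u_k,v_k)]$), so the projected modulus does not overshoot the infimum.
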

	It is worthwhile to mention that in our main result we do not need any restrictions on the size of the parameter $\theta > 0$. More precisely, we can prove that the solutions $(u,v)$ and $(z,w)$ given in Theorem \ref{teorem1} and Theorem \ref{teorem2} are not semitrivial for each $\theta > 0$. Recall that $(u, 0)$ and $(0,v)$ are said to be semitrivial solutions for the System \eqref{sistema Principal} whenever $u$ and $v$ are respectively weak solutions for the following scalar elliptic problems:
	\begin{equation}
		(-\Delta)^su +V_1(x)u =   \lambda|u|^{p - 2}u,  \;\;\; \mbox{in}\;\;\; \mathbb{R}^N, u\in H^s(\mathbb{R}^N),
	\end{equation}
	\begin{equation}
		(-\Delta)^s v +V_2(x)v =   \lambda|v|^{q - 2}v,  \;\;\; \mbox{in}\;\;\; \mathbb{R}^N, v\in H^s(\mathbb{R}^N).
	\end{equation}
	\subsection{Notation} Throughout this work we shall use the following notation:
	\begin{itemize}
		\item $E''_{_\lambda}(u, v)((u, v)(u, v)) = E''_{_\lambda}(u, v)(u, v)^2$ denotes the second derivatives in the $(u,v)$ direction.
		\item The norm in $L^{r}(\mathbb{R}^N)$ and $L^{\infty}(\mathbb{R}^N)$, will be denoted respectively by $\|\cdot\|_{r}$ and $\|\cdot\|_{\infty},  r \in [1, \infty)$.
		\item $S_r$ denotes the best constant for the embedding $X\hookrightarrow L^r(\mathbb{R}^N)$ for each $r \in [2, 2_s^*]$.
		\item $B_\epsilon = B_\epsilon(u, v) = \{(w, z) \in X: \Vert (u, v) - (w, z) \Vert < \epsilon\}$.
		\item $B_\delta(r) = \{x \in \mathbb{R}^N: \vert x - r \vert < \delta \}$.
	\end{itemize}
	
	\subsection{Outline} The remainder of this work is organized as follows: In the forthcoming section we consider some results concerning on the Nehari method for our main problem. In Section $3$ is devoted to the asymptotically behavior of solutions obtained in the Nehari manifolds $\mathcal{N}_{\lambda}^+$ and $\mathcal{N}_{\lambda}^-$. In Section 4 is proved our main results looking for the energy levels for each minimizer in the Nehari manifolds $\mathcal{N}_{\lambda}^+$ and $\mathcal{N}_{\lambda}^-$. In an Appendix we consider some further results for nonlocal elliptic problems involving technical estimates which are useful for the nonlinear Rayleigh quotient taking into account the energy functional $E_\lambda$.

	\section{Preliminary results and variational setting}
	As stated in the introduction, the main objective here is to ensure exists at least two positive solutions for our main problem using the Nehari Method together with the nonlinear Rayleigh quotient. Firstly, we consider the fibering maps for the energy functional. At the same time, we shall consider the fibering map for the associated nonlinear Rayleigh quotients. This relationship will be fundamental to finding critical points for the energy functional $E_{\lambda}$. Firstly, we consider the following result:
	
	\begin{rmk}\label{obs rel Rn eE'}
		Let $t > 0$ and $(u, v) \in \mathcal{A}$ be fixed. Hence, by using (\ref{R_n}), we obtain that 
		\begin{itemize} 
			\item[i)] $R_n(tu, tv) = \lambda$ if, and only if, $E'_{_\lambda}(tu, tv)(tu, tv) = 0$,
			\item [ii)] $R_n(tu, tv) > \lambda$  if, and only if, $E'_{_\lambda}(tu, tv)(tu, tv) > 0$,
			\item [iii)] $R_n(tu, tv) < \lambda$ if, and only if, $E'_{_\lambda}(tu, tv)(tu, tv) < 0$.
		\end{itemize}
	\end{rmk}
	
	\begin{rmk}\label{obs rel Re e E}
		Let $t > 0$ and $(u, v) \in \mathcal{A}$ be fixed. Theferore, by using (\ref{R_e1}), we infer that
		\begin{itemize}
			\item [i)]$ R_e(u, v) = \lambda$  if, and only if,  $E_{_\lambda}(u, v) = 0,$
			\item [ii)]$ R_e(u, v) > \lambda$  if, and only if, $E_{_\lambda}(u, v) > 0,$
			\item [iii)] $ R_e(u, v) < \lambda$  if, and only if, $E_{_\lambda}(u, v) < 0.$
		\end{itemize}
	\end{rmk}
	Now, we consider the fibering function $Q_n : \mathbb{R} \to \mathbb{R}$, for each $t > 0$, which is given by
	\begin{equation}\label{Q_n} 
		Q_n(t) = R_n(tu,tv) = \frac{t^2 \Vert(u,v)\Vert ^2 -\theta t^{\alpha + \beta}  \int_\mathbb{R^N} |u|^\alpha |v|^\beta dx}{ t^{p}\Vert u \Vert _{p} ^{p} + t^{q}\Vert v \Vert _{q} ^{q}},
	\end{equation}
	Analogously, we consider $Q_e : \mathbb{R} \to \mathbb{R}$, for each $t > 0$, given by 
	\begin{equation}\label{Q_e} 
		Q_e(t) =  R_e(tu, tv) = \frac{\frac{1}{2}t^2\Vert(u,v)\Vert ^2-\frac{\theta }{\alpha+\beta}t^{\alpha +\beta}\int_{\mathbb{R}^N} \vert u\vert^\alpha \vert v \vert^\beta dx}{ \frac{t^{p}}{p}{\Vert u \Vert}_{p}^{p}+\frac{t^{q}}{q}{\Vert v \Vert}_{q}^{q}}.
	\end{equation}
	It is not hard to verify that for small $t > 0$ we obtain that
	$$
	\lim_{t \rightarrow 0} \frac{Q_n(t)}{t^{2 - p}} >\frac{ \Vert (u, v) \Vert ^2}{\Vert u \Vert _{p}^{p}+\Vert v \Vert _{q}^{q}} > 0,
	\;\;\;\;\lim_{t \rightarrow 0}\frac{Q_n'(t)}{t^{1- p}} > \frac{\Vert u \Vert _{p} ^{p}\Vert (u,v)\Vert  ^{2} }{(\Vert u \Vert _{p} ^{p} + \Vert v \Vert _{q} ^{q})^2} >0,
	$$
	
	$$\lim_{t \to 0}\frac{Q_e(t)}{t^{2 - p}}  >  \frac{\frac{1}{2}\Vert(u,v)\Vert ^2}{\frac{1}{p}{\Vert u \Vert}_{p}^{p}+\frac{1}{q}{\Vert v \Vert}_{q}^{q}}
	>  0, \,\, \,\, \lim_{t \to 0} \frac{Q_e'(t)}{t^{1 - p}} >  \left ( \frac{1}{p} - \frac{1}{2}  \right )\displaystyle \frac{\frac{1}{2}\Vert u \Vert _{p}^{p} \Vert(u, v) \Vert ^2}{ \left ( \frac{1}{p} \Vert u \Vert_{p}^{p} + \frac{1}{q} \Vert v \Vert_{q}^{q} \right )^2} > 0.$$
	Similarly, we infer that  
	
	$$ \lim_{ t \rightarrow \infty}\frac{Q_n(t)}{t^{\alpha +\beta - p}}<  \frac{-\theta \int _\mathbb{R^N} |u|^{\alpha}  |u|^{\beta}dxdy}{\Vert u \Vert_{p} ^{p} + \Vert v \Vert_{q} ^{q}}<0$$
	and
	\begin{eqnarray*}
		\lim_{ t \rightarrow \infty}  \frac{Q_n'(t)}{t^{\alpha +\beta- 1 - p}}& < &-\frac{ \theta (\alpha + \beta - 1) \Vert u \Vert_{p} ^{p}\int _\mathbb{R^N} |u| ^{\alpha}  |v|^{\beta}dx }{(\Vert u \Vert_{p} ^{p} + \Vert v \Vert_{q} ^{q})^2} - \frac{ \theta (\alpha + \beta - 1) t^{q- p}\Vert v \Vert_{q} ^{q}\int _\mathbb{R^N} |u|^{\alpha}  |v|^{\beta}dx}{{(\Vert u \Vert_{p} ^{p} + \Vert v \Vert_{q} ^{q})^2}} < 0.
	\end{eqnarray*}
	At the same time, we observe that 
	$$\lim_{t \to \infty}\frac{Q_e(t)}{t^{\alpha +\beta - p}} <  \frac{-\frac{\theta}{\alpha+\beta}\int_{\mathbb{R}^N} |u|^\alpha|v|^\beta dx}{\frac{1}{p}{\Vert u \Vert}_{p}^{p}+\frac{1}{q}{\Vert v \Vert}_{q}^{q}} <  0$$
	and 
	\begin{eqnarray*}
		\lim_{t \to \infty} \frac{Q_e'(t)}{t^{\alpha +\beta - 1 - p}}& < &\left ( \frac{\theta}{\alpha + \beta} - \frac{\theta}{p} \right )  \Vert u \Vert _{p}^{p} \int_{\mathbb{R}^N} {|u|^\alpha |v|  ^\beta dx} +  \left ( \frac{\theta}{\alpha + \beta} - \frac{\theta}{q} \right )  \Vert v \Vert _{q}^{q} \int_{\mathbb{R}^N} {|u|^\alpha |v|^\beta dx} <  0.
	\end{eqnarray*}
	
	It important to find the solutions of $Q_n'(t) = 0$ with $t > 0$. Now, by using the Implicit Function Theorem and Lemma \ref{apendice Maxwell} given in the Appendix, there exists a unique $t =t_n(u, v)$ such that $Q_n'(t) = 0$ where $t_n: \mathcal{A} \to \mathbb{R}$ belongs to $C^1(\mathcal{A}, \mathbb{R})$.
	Now, we consider 
	$\Lambda _n, \Lambda _e: \mathcal{A} \to \mathbb{R}$ defined by 
	\begin{equation}\label{Lambda_n de tn} 
		\Lambda_n(u, v) = R_n(t_n(u, v) (u,v)),
	\end{equation}
	\begin{equation}\label{Lambda_e de te} 
		\Lambda_e(u, v) = R_e(t_e(u, v) (u,v)),
	\end{equation}
	It is not hard to verify that $\Lambda_n, \Lambda_e \in C^1(\mathcal{A}, \mathbb{R})$.
	
	\begin{rmk}\label{tn p = q}
		Assuming that $p = q$ the solution of $Q_n'(t) = 0$ is given explicitly in the following form:
		\begin{equation}\label{t__n explic} 
			t = t_n(u, v) = \left ( \frac{(2 - q)\Vert (u, v) \Vert ^2}{\theta (\alpha + \beta - q)  \int_\mathbb{R^N} \vert u\vert^\alpha \vert v\vert^\beta dx} \right)^\frac{1}{\alpha + \beta -2}.
		\end{equation}
		Once again the functional $\Lambda _n: \mathcal{A} \to \mathbb{R}$ defined by
		$$\Lambda _n(u, v) := \max\limits_{t>0}R_n(tu, tv) = R_n(t_n(u, v) (u,v)).$$
		Under these conditions, assuming that $p = q$,  we obtain
		\begin{equation}\label{Lambda_n} 
			\Lambda _n(u, v) = C_{\alpha, \beta, q, \theta} \frac{\left (\Vert (u, v) \Vert ^2\right )^{\frac{ \alpha +\beta -q}{ \alpha + \beta -2}} \left ( \int_\mathbb{R^N} |u|^{\alpha} |v|^{\beta}dx\right )^{\frac{q-2}{\alpha +\beta -2}}} {\Vert u \Vert _{q} ^{q} + \Vert v \Vert _{q}^{q}},
		\end{equation}
		where
		$$C_{\alpha, \beta, q, \theta} = \theta ^{\frac{q-2}{\alpha +\beta - 2}} ( \alpha + \beta - 2)(\alpha+\beta - q)^\frac{-(\alpha +\beta -q)}{\alpha +\beta - 2}(2 - q)^\frac{2 - q}{\alpha + \beta - 2}.$$
	\end{rmk}
	
	\begin{rmk}\label{te p = q}
		Assuming that $p = q$ the solution of $Q_e'(t) = 0$ is given explicitly in the following form:
		$$ t = t_e(u,v) = \left ( \frac{\left ( 2 - q \right )\left ( \alpha + \beta \right ) \Vert (u, v) \Vert ^2}{ 2\theta\left (\alpha +\beta - q \right ) \int _{\mathbb{R}^N} {\vert u \vert^\alpha \vert v\vert^\beta dx}} \right )^ \frac{1}{\alpha +\beta -2}.
		$$
		As a consequence, we also obtain that 
		\begin{equation}\label{Lambda_e = R_e (te(u, v)) q } 
			\Lambda _e(u, v) = \tilde{C}_{\alpha, \beta, q, \theta} \frac{\left (\Vert (u, v) \Vert ^2\right )^{\frac{\alpha +\beta -q}{ \alpha + \beta -2}} \left ( \int_\mathbb{R^N} |u|^{\alpha} |v|^{\beta}dx\right )^{\frac{q-2}{\alpha +\beta -2}}} {\Vert u \Vert _{q} ^{q} + \Vert v \Vert _{q}^{q}},
		\end{equation}
		and
		\begin{equation}\label{Ctilalphabetaqteta} 
			\tilde{C}_{\alpha, \beta, q, \theta} = q (\alpha + \beta - 2) \left ( \frac{(2 - q)(\alpha + \beta)}{\theta} \right )^{\frac{2 - q}{\alpha + \beta - 2}} \left ( \frac{1}{2(\alpha + \beta - q)} \right )^{\frac{\alpha + \beta - q}{\alpha + \beta - 2}}.
		\end{equation}
	\end{rmk}
	
	\begin{rmk}
		Suppose (P), ($V_0$) and ($V_1$).  Assume also that $p = q$. Then $C_{\alpha, \beta, q, \theta} > \tilde{C}_{\alpha, \beta, q,\theta}$, see for instance  Lemma \ref{apendice2A} in the Appendix.
	\end{rmk}
	
	One of the main features in the present work is to consider the cases $p = q$ and $p \neq q$. Recall that for the case $p \neq q$ all functionals $\Lambda_n, \Lambda_e$ and $t_n, t_e$ are given only implicitly. Hence, we can prove our main results using the Nehari method and the Nonlinear Rayleigh quotient assuming that those functionals are given only implicitly. Firstly, we consider the following result:
	
	\begin{prop}\label{lema_naru1}
		Suppose (P), ($V_0$) and ($V_1$). Consider the functional $\Lambda_n: \mathcal{A} \to \mathbb{R}$ given by
		$$\Lambda_n(u, v) = Q_n(t_n(u, v)) = \max_{t>0}R_n(tu, tv) = R_n(t_n(u, v) (u,v)),$$ $(u, v) \in \mathcal{A}$. Then we obtain that $\Lambda_n$ is 0-homogeneous.
	\end{prop}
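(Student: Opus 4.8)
The plan is to exploit the scaling structure of $R_n$ directly, thereby avoiding any appeal to the merely implicit description of $t_n$. First I would record that the set $\mathcal{A}$ is invariant under multiplication by positive scalars: for $s > 0$ and $(u,v) \in \mathcal{A}$ one has $\int_{\mathbb{R}^N} |su|^\alpha |sv|^\beta \, dx = s^{\alpha+\beta} \int_{\mathbb{R}^N} |u|^\alpha |v|^\beta \, dx > 0$, so that $(su, sv) \in \mathcal{A}$ and $\Lambda_n(su, sv)$ is well defined. This guarantees that both sides of the claimed identity make sense.

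The key step is a change of variable inside the supremum. By the definition \eqref{Lambda_n de tn},
$$\Lambda_n(su, sv) = \sup_{t>0} R_n(t(su), t(sv)) = \sup_{t>0} R_n((ts)u, (ts)v).$$
Since $s > 0$ is fixed, the map $t \mapsto \tau := ts$ is a bijection of $(0, \infty)$ onto itself. Hence the supremum over $t > 0$ coincides with the supremum over $\tau > 0$, which yields
$$\Lambda_n(su, sv) = \sup_{\tau > 0} R_n(\tau u, \tau v) = \Lambda_n(u, v).$$
This is precisely the assertion that $\Lambda_n$ is $0$-homogeneous.

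As a consistency check, and to reconcile this with the $C^1$ description of $t_n$ obtained via the Implicit Function Theorem, I would observe that the unique maximizer transforms as $t_n(su, sv) = s^{-1} t_n(u, v)$. Indeed, writing $Q_n(t) = R_n(tu, tv)$, the fibering map associated with $(su, sv)$ is $t \mapsto Q_n(ts)$, whose derivative $s\,Q_n'(ts)$ vanishes exactly when $Q_n'(ts) = 0$, that is, at $ts = t_n(u,v)$. Substituting $t = t_n(su, sv) = s^{-1} t_n(u,v)$ into $R_n(t(su), t(sv)) = R_n(t_n(u,v)\,u,\, t_n(u,v)\,v)$ recovers $\Lambda_n(u,v)$, in full agreement with the change-of-variable argument above.

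There is essentially no serious obstacle here: the statement is a direct consequence of the fact that $\Lambda_n$ is obtained by optimizing $R_n$ along the ray $\{t(u,v): t > 0\}$, and replacing $(u,v)$ by $(su,sv)$ merely reparametrizes the same ray. The only point genuinely requiring a line of justification is the invariance of $\mathcal{A}$ under positive dilations, recorded in the first paragraph, which ensures the supremum defining $\Lambda_n$ is taken over the same set on both sides.
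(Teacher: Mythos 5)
Your proof is correct and is essentially identical to the paper's one-line argument, which performs the same change of variable $a = ts$ inside the supremum to get $\Lambda_n(su,sv) = \sup_{a>0} R_n(au,av) = \Lambda_n(u,v)$. Your extra observations --- that $\mathcal{A}$ is invariant under positive dilations and that the maximizer transforms as $t_n(su,sv) = s^{-1}t_n(u,v)$ --- are sound details the paper leaves implicit.
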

	\begin{proof}
		For each $s > 0$, $s \in \mathbb{R}$,
		$\Lambda _n(su, sv) = \sup\limits_{t>0}Q_n(tsu, tsv) = \sup\limits_{a>0} Q_n(au, av) = \Lambda _n(u, v).$ This ends the proof.
	\end{proof}
	
	\begin{prop}\label{lema_naru2}
		Suppose (P), ($V_0$) and ($V_1$). Let $\Lambda_e: \mathcal{A} \to \mathbb{R}$ given in
		$$\Lambda_e(u, v) = Q_e(t_e(u, v)) = \max_{t>0}R_e(tu, tv) = R_e(t_e(u, v) (u,v)),$$ $(u, v) \in \mathcal{A}$. Then we obtain that $\Lambda_e$ is 0-homogeneous.
	\end{prop}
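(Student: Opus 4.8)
The plan is to establish the $0$-homogeneity directly from the definition, namely to verify that $\Lambda_e(su, sv) = \Lambda_e(u, v)$ for every scalar $s > 0$; this argument is the exact analogue of the one used for $\Lambda_n$ in Proposition \ref{lema_naru1}. Before taking the supremum I would first record that the admissible set $\mathcal{A}$ is invariant under positive dilations: if $(u, v) \in \mathcal{A}$ and $s > 0$, then $\int_{\mathbb{R}^N} |su|^\alpha |sv|^\beta dx = s^{\alpha + \beta} \int_{\mathbb{R}^N} |u|^\alpha |v|^\beta dx > 0$, so that $(su, sv) \in \mathcal{A}$ and $\Lambda_e(su, sv)$ is well defined.

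The key step is a change of variables inside the supremum. Using the definition $\Lambda_e(u, v) = \sup_{t > 0} R_e(tu, tv)$, I would write, for fixed $s > 0$,
$$\Lambda_e(su, sv) = \sup_{t > 0} R_e\big(t(su), t(sv)\big) = \sup_{t > 0} R_e\big((ts)u, (ts)v\big).$$
Setting $a = ts$ and noting that, with $s > 0$ fixed, the map $t \mapsto ts$ is a bijection of $(0, \infty)$ onto itself, the supremum over $t > 0$ coincides with the supremum over $a > 0$. Hence
$$\Lambda_e(su, sv) = \sup_{a > 0} R_e(au, av) = \Lambda_e(u, v),$$
which is precisely the claimed $0$-homogeneity.

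There is essentially no serious obstacle here: the result is a purely scaling and reparametrization fact and does not use the fine structure of $R_e$. The only point deserving a line of justification is that the supremum is genuinely attained, so that the identity $\Lambda_e(u, v) = Q_e(t_e(u, v)) = R_e(t_e(u, v)(u, v))$ is consistent; this follows from the boundary behavior of $Q_e$ computed above (the limits as $t \to 0^+$ and $t \to \infty$ force an interior maximum) together with the existence and uniqueness of the critical point $t_e(u, v)$, obtained as for $t_n$ via the Implicit Function Theorem. If one prefers to argue through the maximizer rather than the supremum, the same reparametrization shows $t_e(su, sv) = t_e(u, v)/s$, and substituting this back into $R_e(t_e(su, sv)(su, sv))$ again yields $\Lambda_e(u, v)$.
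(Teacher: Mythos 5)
Your proof is correct and follows essentially the same route as the paper's: the paper's own argument is exactly the one-line reparametrization $\Lambda_e(su,sv)=\sup_{t>0}R_e(tsu,tsv)=\sup_{a>0}R_e(au,av)=\Lambda_e(u,v)$. Your additional remarks (dilation invariance of $\mathcal{A}$, attainment of the supremum, and the relation $t_e(su,sv)=t_e(u,v)/s$) are sound refinements but not needed beyond what the paper records.
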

	\begin{proof}
		For each $s > 0$, $s \in \mathbb{R}$, $\Lambda _e(su, sv) = sup_{t>0}Q_e(tsu, tsv) = sup_{a>0} Q_e(au, av) = \Lambda _e(u, v)$. This ends the proof.
	\end{proof}
	
	\begin{prop}\label{lema_naruprop}
		Suppose (P), ($V_0$) and ($V_1$). Then we obtain that
		$$
		Q_n(t) - Q_e(t)=\frac{t}{pq}\left ( \frac{qt^{p} \parallel u \parallel _{p}^{p} + p t^{q} \parallel v \parallel _{q}^{q}}{t^{p} \parallel u \parallel _{p}^{p} + t^{q} \parallel v \parallel _{q}^{q}} \right )\frac{dQ_e(t)}{dt}.
		$$
	\end{prop}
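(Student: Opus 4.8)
The plan is to recognize that the numerators and denominators of $Q_n$ and $Q_e$ are linked by a single differentiation, which collapses the whole identity to an elementary quotient-rule computation. For fixed $(u,v) \in \mathcal{A}$, I would introduce the abbreviations
\begin{equation*}
	f(t) = \frac{1}{2}t^2 \Vert(u,v)\Vert^2 - \frac{\theta}{\alpha+\beta}t^{\alpha+\beta}\int_{\mathbb{R}^N}|u|^\alpha|v|^\beta\,dx, \qquad g(t) = \frac{t^p}{p}\Vert u\Vert_p^p + \frac{t^q}{q}\Vert v\Vert_q^q,
\end{equation*}
so that, by \eqref{Q_e}, we have $Q_e(t) = f(t)/g(t)$. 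The first key step is the observation that differentiating $f$ and $g$ and multiplying by $t$ recovers exactly the numerator and denominator of $Q_n$ in \eqref{Q_n}; indeed
\begin{equation*}
	t f'(t) = t^2 \Vert(u,v)\Vert^2 - \theta t^{\alpha+\beta}\int_{\mathbb{R}^N}|u|^\alpha|v|^\beta\,dx, \qquad t g'(t) = t^p \Vert u\Vert_p^p + t^q \Vert v\Vert_q^q.
\end{equation*}
Consequently $Q_n(t) = tf'(t)/\bigl(tg'(t)\bigr) = f'(t)/g'(t)$, and the entire statement reduces to a relation between the single pair $(f,g)$.

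With this structural identity in hand, I would compute the difference via the quotient rule. Since $Q_n = f'/g'$ and $Q_e = f/g$,
\begin{equation*}
	Q_n(t) - Q_e(t) = \frac{f'}{g'} - \frac{f}{g} = \frac{f'g - f g'}{g' g}.
\end{equation*}
The quotient rule also gives $Q_e'(t) = (f'g - fg')/g^2$, hence $f'g - fg' = g^2\, Q_e'(t)$. Substituting this back yields
\begin{equation*}
	Q_n(t) - Q_e(t) = \frac{g(t)^2\, Q_e'(t)}{g'(t)\, g(t)} = \frac{g(t)}{g'(t)}\,Q_e'(t).
\end{equation*}

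Finally, it remains to rewrite $g/g'$ in the prescribed form. Writing $g(t) = \frac{1}{pq}\bigl(q t^p \Vert u\Vert_p^p + p t^q \Vert v\Vert_q^q\bigr)$ and $g'(t) = \frac{1}{t}\bigl(t^p \Vert u\Vert_p^p + t^q \Vert v\Vert_q^q\bigr)$, one obtains
\begin{equation*}
	\frac{g(t)}{g'(t)} = \frac{t}{pq}\left(\frac{q t^p \Vert u\Vert_p^p + p t^q \Vert v\Vert_q^q}{t^p \Vert u\Vert_p^p + t^q \Vert v\Vert_q^q}\right),
\end{equation*}
which, combined with the previous display, gives the claimed formula. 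There is no serious obstacle here: the proof is a direct algebraic manipulation, valid for every $t>0$ since $(u,v)\in\mathcal{A}$ forces $\Vert u\Vert_p^p>0$ and $\Vert v\Vert_q^q>0$, so that $g(t), g'(t) > 0$ and no division by zero occurs. The only point deserving emphasis—and the one I would flag as the conceptual core—is the structural observation $Q_n = f'/g'$ against $Q_e = f/g$; once this is identified, the quotient rule closes the argument immediately.
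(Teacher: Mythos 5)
Your proof is correct, and the one point of care is handled: for $(u,v)\in\mathcal{A}$ the coupling integral is positive, which forces $u\neq 0$ and $v\neq 0$, hence $g(t)>0$ and $g'(t)>0$ for all $t>0$, so the quotient-rule manipulations never divide by zero; the structural identity $tf'(t)=t^2\Vert(u,v)\Vert^2-\theta t^{\alpha+\beta}\int_{\mathbb{R}^N}|u|^\alpha|v|^\beta dx$ and $tg'(t)=t^p\Vert u\Vert_p^p+t^q\Vert v\Vert_q^q$ does give $Q_n=f'/g'$ against $Q_e=f/g$, and your simplification of $g/g'$ matches the stated bracket. Your route differs from the paper's in organization rather than substance: the paper's proof rewrites $Q_n(t)-Q_e(t)$ over the common denominator $\bigl(t^p\Vert u\Vert_p^p+t^q\Vert v\Vert_q^q\bigr)\bigl(\frac{1}{p}t^p\Vert u\Vert_p^p+\frac{1}{q}t^q\Vert v\Vert_q^q\bigr)$ and then asserts the claimed formula outright, suppressing the expansion and regrouping of the numerator, whereas you extract the intermediate general identity $Q_n-Q_e=(g/g')\,Q_e'$ via the quotient rule and reduce everything to the elementary computation of $g/g'$. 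Your packaging buys three things: it actually supplies the cancellation the paper omits; it makes immediately transparent why $Q_n-Q_e$ has the same sign as $Q_e'$ (since $g/g'>0$), which is exactly the content of Remark \ref{lema_naru}; and the identity $Q_n-Q_e=(g/g')Q_e'$ is insensitive to the specific powers $2$, $\alpha+\beta$, $p$, $q$, so it would carry over verbatim to more general Rayleigh-quotient pairs of the form $f/g$ versus $f'/g'$. The paper's version buys only brevity.
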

	\begin{proof}
		Firstly, by using (\ref{Q_n}) and (\ref{Q_e}), we infer that 
		\begin{eqnarray*}
			Q_n(t) - Q_e(t) & = & \frac{\left (t^2 \Vert(u,v)\Vert ^2 -\theta t^{\alpha + \beta} \int\limits_\mathbb{R^N} |u|^\alpha |v|^\beta dx\right )\left ( \frac{1}{p}{t^{p}\Vert u \Vert}_{p}^{p}+\frac{1}{q}{t^{q}\Vert v \Vert}_{q}^{q} \right )}{\left (t^{p}\Vert u \Vert _{p} ^{p} + t^{q}\Vert v \Vert _{q} ^{q}\right )\left (\frac{1}{p}{t^{p}\Vert u \Vert}_{p}^{p}+\frac{1}{q}{t^{q}\Vert v \Vert}_{q}^{q} \right )} \\ 
			& & - \frac{\left (\frac{1}{2}t^2\Vert(u,v)\Vert ^2-\frac{\theta}{\alpha+\beta}t^{\alpha+\beta}\int\limits_{\mathbb{R}^N} |u|^\alpha|v|^\beta dx \right )\left (t^{p}\Vert u \Vert _{p} ^{p} + t^{q}\Vert v \Vert _{q} ^{q} \right )}{\left (t^{p}\Vert u \Vert _{p} ^{p} + t^{q}\Vert v \Vert _{q} ^{q}\right )\left (\frac{1}{p}{t^{p}\Vert u \Vert}_{p}^{p}+\frac{1}{q}{t^{q}\Vert v \Vert}_{q}^{q}\right )}.
		\end{eqnarray*}
		Hence, we obtain that 
		$$Q_n(t) - Q_e(t)=\frac{t}{pq}\left ( \frac{qt^{p} \Vert u \Vert _{p}^{p} + p t^{q} \Vert v \Vert _{q}^{q}}{t^{p} \Vert u \Vert _{p}^{p} + t^{q} \Vert v \Vert _{q}^{q}} \right )\frac{d}{dt}Q_e(t).$$
		This ends the proof. 
	\end{proof}
	As a consequence, we obtain the following result:
	\begin{rmk}\label{lema_naru}
		Suppose (P), ($V_0$) and ($V_1$). Then, we obtain that following assertions: 
		\begin{itemize}
			\item[i)] There holds $Q_n(t) > Q_e(t) $ if and only if $\frac{dQ_e(t)}{dt} > 0$. Moreover, the last inequality occurs if and only if $t < t_e(u, v)$;
			\item[ii)]	It holds $Q_n(t) < Q_e(t)$ if and only if $\frac{dQ_e(t)}{dt} < 0$. Similarly, the last inequality holds if and only if  $t > t_e(u, v)$;
			\item[iii)]	It holds $Q_n(t) = Q_e(t)$ if and only if $\frac{dQ_e(t)}{dt} = 0$. Once again the last identity holds if and only if $t = t_e(u, v).$
		\end{itemize}
	\end{rmk}
	
	Now, we shall consider the following assertions around the functionals $\Lambda_n, \Lambda_e: \mathcal{A} \to \mathbb{R}$.
	\begin{rmk}\label{integral limitada tilde u}
		Now, by using (\ref{Lambda_n de tn}) together with the identities $Q_n(t) = R_n(tu, tv)$ and $\left.\frac{d}{dt}R_n(t \bar{u}_k, t\bar{{v}_k})\right|_{t = 1}  = 0$, we also obtain that
		\begin{equation}\begin{array}{lll}\label{integra acoplamento}
				\theta \displaystyle \int\limits_ \mathbb{R^N} \vert \bar{u}_k \vert ^\alpha \vert \bar{v}_k \vert ^\beta dx = \frac{\displaystyle  \Vert (\bar{u}_k, \bar{v}_k ) \Vert ^2\left [ \left (2 - p \right ) \Vert \bar{u}_k \Vert_{p}^{p} + \left (2 - q \right ) \Vert \bar{v}_k \Vert_{q}^{q} \right ]}{\left [ \left ((\alpha + \beta \right ) - p) \Vert \bar{u}_k \Vert_{p}^{p} + \left ((\alpha + \beta \right ) - q) \Vert \bar{v}_k \Vert_{q}^{q}\right ]}.
			\end{array}
		\end{equation}
		In light of Lemma \ref{ex3.18Elon analise reta} given in the Appendix we infer that 
		\begin{equation}\begin{array}{lll}\label{2k}
				f(q) \Vert (\tilde{u}_k, \tilde{v}_k) \Vert^2 \le\theta  \displaystyle\int\limits_ \mathbb{R^N} \vert \tilde{u}_k \vert ^\alpha \vert \tilde{v}_k \vert ^\beta dx \le   f(p) \Vert (\tilde{u}_k,\tilde{v}_k ) \Vert ^2.
			\end{array}
		\end{equation}
	\end{rmk}

	\begin{rmk}\label{integral limitada bar u}
		Similarly, using the equation $Q_e(t) = R_e(t \bar{u}, t \bar{v})$ and the fact that $\left.\frac{d}{dt}R_e(t \bar{u}_k, t\bar{{v}_k})\right|_{t = 1} = 0$, we also obtain that
		\begin{equation}\begin{array}{lll}\label{integra acoplamento bar}
				\theta \displaystyle \int\limits_ \mathbb{R^N} \vert \bar{u}_k \vert ^\alpha \vert \bar{v}_k \vert ^\beta dx = \frac{  \Vert (\bar{u}_k, \bar{v}_k ) \Vert ^2\left [ \left ( \frac{1}{p} -  \frac{1}{2} \right ) \Vert \bar{u}_k \Vert_{p}^{p} + \left ( \frac{1}{q} -  \frac{1}{2} \right )  \Vert \bar{v}_k \Vert_{q}^{q} \right ]}{\left [ \left (\frac{1}{p} - \frac{1}{\alpha + \beta} \right ) \Vert \bar{u}_k \Vert_{p}^{p} + \left ( \frac{1}{q} - \frac{1}{\alpha + \beta} \right ) \Vert \bar{v}_k \Vert_{q}^{q}\right ]}.
			\end{array}
		\end{equation}
		Furthermore, by using the Lemma \ref{ex3.18Elon analise reta} in the  Appendix, we deduce that
		\begin{equation}\begin{array}{lll}\label{2ky}
				\bar{f}(q) \Vert (\bar{u}_k, \bar{v}_k) \Vert^2 \le\theta \int\limits_ \mathbb{R^N} \vert \bar{u}_k \vert ^\alpha \vert \bar{v}_k \vert ^\beta dx \le   \bar{f}(p) \Vert (\bar{u}_k, \bar{v}_k ) \Vert ^2.
			\end{array}
		\end{equation}
	\end{rmk}
	
	\begin{prop}\label{Limitada}
		Suppose (P), ($V_0$) and ($V_1$). Let $(u_k, v_k) \in \mathcal{A}$ be a minimizer sequence for $\lambda^*$. Then there exists another bounded minimizing sequence for $\lambda^*$ in $X$.
	\end{prop}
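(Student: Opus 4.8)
The plan is to exploit the $0$-homogeneity of $\Lambda_n$ proved in Proposition \ref{lema_naru1}, namely $\Lambda_n(su, sv) = \Lambda_n(u, v)$ for every $s > 0$. The point is that the value of $\Lambda_n$ is constant along each ray through the origin, so a minimizing sequence for $\lambda^* = \inf_{(u,v)\in\mathcal{A}} \Lambda_n(u,v)$ may be freely rescaled ray-by-ray without disturbing its $\Lambda_n$-value. I would therefore rescale each term of the given sequence so that it lands on the unit sphere of $X$, which automatically produces a bounded sequence that is still minimizing.

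First I would fix a minimizing sequence $(u_k, v_k) \in \mathcal{A}$, so that $\Lambda_n(u_k, v_k) \to \lambda^*$. Observe that every element of $\mathcal{A}$ is nonzero, since $\int_{\mathbb{R}^N} \vert u \vert^\alpha \vert v \vert^\beta\, dx > 0$ forces both $u \not\equiv 0$ and $v \not\equiv 0$; hence $\Vert (u_k, v_k) \Vert > 0$ and the rescaling is legitimate. I would then set
$$(\tilde u_k, \tilde v_k) = \frac{(u_k, v_k)}{\Vert (u_k, v_k) \Vert},$$
so that $\Vert (\tilde u_k, \tilde v_k) \Vert = 1$ for every $k$ and the new sequence is bounded in $X$.

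It remains to check the two structural properties. Membership in $\mathcal{A}$ is preserved because the coupling integral is $(\alpha+\beta)$-homogeneous: $\int_{\mathbb{R}^N} \vert \tilde u_k \vert^\alpha \vert \tilde v_k \vert^\beta\, dx = \Vert (u_k, v_k) \Vert^{-(\alpha+\beta)} \int_{\mathbb{R}^N} \vert u_k \vert^\alpha \vert v_k \vert^\beta\, dx > 0$, whence $(\tilde u_k, \tilde v_k) \in \mathcal{A}$. The minimizing property is preserved by Proposition \ref{lema_naru1}: choosing $s = \Vert (u_k, v_k) \Vert^{-1} > 0$ yields $\Lambda_n(\tilde u_k, \tilde v_k) = \Lambda_n(u_k, v_k) \to \lambda^*$. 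Thus $(\tilde u_k, \tilde v_k)$ is a bounded minimizing sequence for $\lambda^*$, which is the assertion.

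I do not expect a genuine analytical obstacle here; the entire argument rests on the scaling invariance of $\Lambda_n$ and of the set $\mathcal{A}$. The only delicate points are verifying that the rescaling is admissible (non-vanishing of elements of $\mathcal{A}$) and that the rescaled terms remain in $\mathcal{A}$, both of which follow from the homogeneity of the coupling integral. If one later wishes the bounded minimizing sequence to also satisfy the projection identity $\left.\frac{d}{dt}R_n(t\tilde u_k, t\tilde v_k)\right|_{t=1} = 0$, so as to invoke the two-sided bounds \eqref{2k} of Remark \ref{integral limitada tilde u}, one should instead rescale each term by its projection factor $t_n(u_k,v_k)$ and read the boundedness off those uniform estimates; for the present statement, however, normalization to the unit sphere already suffices.
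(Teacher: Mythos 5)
Your proof is correct, and it settles the statement by a genuinely different and more elementary route than the paper. You project each term of the minimizing sequence onto the unit sphere of $X$ and invoke the $0$-homogeneity of $\Lambda_n$ (Proposition \ref{lema_naru1}) together with the cone structure of $\mathcal{A}$; boundedness is then automatic and the minimizing property is free, which is all the literal statement demands. The paper instead rescales by the projection factor, setting $\bar{u}_k = t_n(u_k, v_k)u_k$ and $\bar{v}_k = t_n(u_k, v_k)v_k$, so that $t_n(\bar{u}_k, \bar{v}_k) = 1$, hence $\Lambda_n(\bar{u}_k, \bar{v}_k) = R_n(\bar{u}_k, \bar{v}_k) \le \lambda^* + 1$ for large $k$ and $\left.\frac{d}{dt}R_n(t\bar{u}_k, t\bar{v}_k)\right|_{t=1} = 0$; there boundedness is not free but is extracted from the identity \eqref{integra acoplamento} and the two-sided estimate \eqref{2k}, which yield $R_n(\bar{u}_k, \bar{v}_k) \ge \bigl(1 - f(p)\bigr)\Vert(\bar{u}_k, \bar{v}_k)\Vert^2 / \bigl(\Vert \bar{u}_k\Vert_p^p + \Vert \bar{v}_k\Vert_q^q\bigr)$ and force a uniform bound via the Sobolev embeddings of Lemma \ref{chave}, since $p \le q < 2$. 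What the paper's heavier normalization buys is exactly what you flagged in your closing remark: the downstream results (Propositions \ref{lambda frac Sem Inf}, \ref{tilde Longe de zero} and \ref{lambda*ating}) work with the specific sequence $(\bar{u}_k, \bar{v}_k)$ produced here and crucially exploit the stationarity at $t = 1$ --- for instance to get the uniform lower bound $\Vert(\bar{u}_k, \bar{v}_k)\Vert \ge \delta$ and the identity $R_n(\bar{u}_k, \bar{v}_k) = \Lambda_n(\bar{u}_k, \bar{v}_k)$ along the sequence --- so within the paper's architecture it is the boundedness of the $t_n$-projected sequence, not merely of some bounded minimizing sequence, that is actually consumed later. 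Your unit-sphere sequence proves the proposition as stated; to feed the subsequent propositions one should adopt the $t_n$-rescaling, precisely as you yourself observed.
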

	\begin{proof}    
		Define the sequence $(\bar{u}_k, \bar{v}_k)$ given by $\bar{u}_k = t_n(u_k, v_k) u_k$ and $\bar{v}_k = t_n(u_k, v_k)v_k$.
		Since $\Lambda_n$ is zero homogeneous it follows that $t_n(\bar{u}_k, \bar{v}_k) = 1$. In particular,  $ \lambda^* \leq \Lambda_n(\bar{u}_k, \bar{v}_k) = R_n(\bar{u}_k, \bar{v}_k) \leq \lambda^* + 1$ and $R'_n(\bar{u}_k, \bar{v}_k)(\bar{u}_k, \bar{v}_k) = 0$. Hence, using the last assertion together with Remark \ref{integral limitada tilde u}, we obtain that $(\bar{u}_k, \bar{v}_k)$ is bounded in $X$. This ends the proof. 
	\end{proof}
	\begin{prop}\label{Limitada2}
		Suppose (P), ($V_0$) and ($V_1$). Let $(u_k, v_k) \in \mathcal{A}$ be a minimizer sequence for $\lambda_*$. Then there exists another bounded minimizing sequence for $\lambda_*$ in $X$.
	\end{prop}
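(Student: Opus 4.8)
The plan is to imitate the proof of Proposition~\ref{Limitada}, systematically replacing every object attached to $R_n$ by its $R_e$-counterpart. Given the minimizing sequence $(u_k,v_k)\in\mathcal{A}$ for $\lambda_*$, I would project each term onto the maximum of its own $R_e$-fiber by setting $\bar{u}_k=t_e(u_k,v_k)u_k$ and $\bar{v}_k=t_e(u_k,v_k)v_k$, where $t_e(u,v)$ denotes the unique positive critical point of $t\mapsto R_e(tu,tv)$ appearing in \eqref{Lambda_e de te}. Since $\Lambda_e$ is $0$-homogeneous by Proposition~\ref{lema_naru2}, this rescaling leaves the value of $\Lambda_e$ unchanged and places the new pair exactly at its fiber maximum, so that $t_e(\bar{u}_k,\bar{v}_k)=1$ and $\Lambda_e(\bar{u}_k,\bar{v}_k)=R_e(\bar{u}_k,\bar{v}_k)=\Lambda_e(u_k,v_k)$. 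Hence $(\bar{u}_k,\bar{v}_k)$ is still a minimizing sequence for $\lambda_*$, and after discarding finitely many indices we may assume $\lambda_*\le R_e(\bar{u}_k,\bar{v}_k)\le \lambda_*+1$.

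Next I would use that $t_e(\bar{u}_k,\bar{v}_k)=1$ is equivalent to the vanishing of the fiber derivative, $\left.\tfrac{d}{dt}R_e(t\bar{u}_k,t\bar{v}_k)\right|_{t=1}=0$. This is precisely the hypothesis behind Remark~\ref{integral limitada bar u}, which supplies the two-sided comparison \eqref{2ky}, namely
$$\bar{f}(q)\,\Vert(\bar{u}_k,\bar{v}_k)\Vert^2 \le \theta\int_{\mathbb{R}^N}\vert\bar{u}_k\vert^\alpha\vert\bar{v}_k\vert^\beta\,dx \le \bar{f}(p)\,\Vert(\bar{u}_k,\bar{v}_k)\Vert^2.$$
Feeding the upper bound into the numerator of $R_e$, the coupling term is absorbed and the numerator is bounded below by $\bigl(\tfrac12-\tfrac{\bar{f}(p)}{\alpha+\beta}\bigr)\Vert(\bar{u}_k,\bar{v}_k)\Vert^2$. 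A short computation shows $\tfrac12-\tfrac{\bar{f}(p)}{\alpha+\beta}=\tfrac{\alpha+\beta-2}{2(\alpha+\beta-p)}$, which is strictly positive precisely because $\alpha+\beta>2$ in hypothesis (P); this is the single place where the superlinearity of the coupling term is essential.

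Finally I would combine the lower bound on the numerator with the upper bound $R_e(\bar{u}_k,\bar{v}_k)\le\lambda_*+1$. Clearing the positive denominator of $R_e$ yields
$$\frac{\alpha+\beta-2}{2(\alpha+\beta-p)}\,\Vert(\bar{u}_k,\bar{v}_k)\Vert^2 \le (\lambda_*+1)\left(\frac1p\Vert\bar{u}_k\Vert_p^p+\frac1q\Vert\bar{v}_k\Vert_q^q\right).$$
Estimating the right-hand side by the continuous embedding of Lemma~\ref{chave}, so that $\Vert\bar{u}_k\Vert_p^p\le C\Vert(\bar{u}_k,\bar{v}_k)\Vert^p$ and $\Vert\bar{v}_k\Vert_q^q\le C\Vert(\bar{u}_k,\bar{v}_k)\Vert^q$, and recalling $1\le p\le q<2$ from (P), the right-hand side grows only like $\Vert(\bar{u}_k,\bar{v}_k)\Vert^p+\Vert(\bar{u}_k,\bar{v}_k)\Vert^q$ with both exponents below $2$. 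Dividing the resulting inequality by $\Vert(\bar{u}_k,\bar{v}_k)\Vert^2$ and letting the norm tend to infinity would give a contradiction, so $\Vert(\bar{u}_k,\bar{v}_k)\Vert$ stays bounded, which is the claim.

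I expect no serious obstacle here, since the whole argument is a transcription of Proposition~\ref{Limitada} for the energy-fiber quotient $R_e$. The only point demanding genuine care is verifying that the numerator of $R_e$ really dominates the squared norm, that is, that $\tfrac12-\tfrac{\bar{f}(p)}{\alpha+\beta}>0$; this rests on $\alpha+\beta>2$ together with the explicit form of $\bar{f}$ recorded in Remark~\ref{integral limitada bar u}, and once it is in place the sub-quadratic growth of the denominator closes the estimate immediately.
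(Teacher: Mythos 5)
Your proposal is correct and follows essentially the same route as the paper: the paper's proof of Proposition~\ref{Limitada2} is precisely the rescaling $\bar{u}_k = t_e(u_k,v_k)u_k$, $\bar{v}_k = t_e(u_k,v_k)v_k$, combined with the $0$-homogeneity of $\Lambda_e$ and the two-sided bound of Remark~\ref{integral limitada bar u}, exactly as you do. Your write-up in fact supplies more detail than the paper (which defers to the argument of Proposition~\ref{Limitada}), and your key computation $\tfrac12-\tfrac{\bar{f}(p)}{\alpha+\beta}=\tfrac{\alpha+\beta-2}{2(\alpha+\beta-p)}>0$ is verified by the explicit form $\bar{f}(x)=\tfrac{(2-x)(\alpha+\beta)}{2(\alpha+\beta-x)}$ coming from \eqref{integra acoplamento bar} and Lemma~\ref{ex3.18Elon analise reta}.
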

	\begin{proof}
		Consider the sequence $(\bar{u}_k, \bar{v}_k)$ given by $\bar{u}_k = t_e(u_k, v_k) u_k$ and $\bar{v}_k = t_e(u_k, v_k)v_k$.
		Once again, by using the fact that $\Lambda_e$ is zero homogeneous and Remark \ref{integral limitada bar u}, we infer that $(\bar{u}_k, \bar{v}_k)$ is bounded in $X$. This ends the proof. 
	\end{proof}

	\begin{prop}\label{lambda frac Sem Inf}
		Suppose (P), ($V_0$) and ($V_1$). Let $(u_k, v_k) \in \mathcal{A}$ be a minimizer sequence for $\Lambda_n$ and $\bar{u}_k = t_n(u_k, v_k) u_k, \bar{v}_k = t_n(u_k, v_k)v_k$.
		Then we obtain that
		$$\Lambda_n(\bar{u},\bar{v}) \le \lim \inf_{k \to \infty}\Lambda_n(\bar{u}_k,\bar{v}_k)$$
		where $\bar{u}_k \rightharpoonup \bar{u}$ in $X_1$ and $ \bar{v}_k \rightharpoonup \bar{v}$ in $X_2$.
	\end{prop}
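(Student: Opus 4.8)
The plan is to combine the compactness furnished by Lemma \ref{chave} with the variational characterization $\Lambda_n(u,v)=\sup_{t>0}R_n(tu,tv)$ and the weak lower semicontinuity of the Hilbert norm. First I would invoke Proposition \ref{Limitada}: the sequence $(\bar u_k,\bar v_k)$ is bounded in $X$, so up to a subsequence $\bar u_k\rightharpoonup \bar u$ in $X_1$ and $\bar v_k\rightharpoonup\bar v$ in $X_2$. Since $1\le p\le q<2<\alpha+\beta<2^*_s$, Lemma \ref{chave} yields the strong convergences $\bar u_k\to\bar u$ and $\bar v_k\to\bar v$ in $L^{r}(\Rn)$ for every $r\in[1,2^*_s)$. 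In particular $\|\bar u_k\|_p^p\to\|\bar u\|_p^p$ and $\|\bar v_k\|_q^q\to\|\bar v\|_q^q$, and, writing the coupling difference as $(|\bar u_k|^\alpha-|\bar u|^\alpha)|\bar v_k|^\beta+|\bar u|^\alpha(|\bar v_k|^\beta-|\bar v|^\beta)$ and applying the generalized Hölder inequality with exponents $(\alpha+\beta)/\alpha$ and $(\alpha+\beta)/\beta$, also
\[
\int_{\Rn}|\bar u_k|^\alpha|\bar v_k|^\beta\,dx\to \int_{\Rn}|\bar u|^\alpha|\bar v|^\beta\,dx .
\]

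The first genuine obstacle is to guarantee that the weak limit stays admissible, i.e.\ $(\bar u,\bar v)\in\mathcal A$, so that $\Lambda_n(\bar u,\bar v)$ is even defined. Here the normalization $t_n(\bar u_k,\bar v_k)=1$ is decisive. By Remark \ref{integral limitada tilde u}, estimate \eqref{2k} gives the lower bound $\theta\int_{\Rn}|\bar u_k|^\alpha|\bar v_k|^\beta\,dx\ge f(q)\,\|(\bar u_k,\bar v_k)\|^2$, while Lemma \ref{chave} furnishes the opposite control $\int_{\Rn}|\bar u_k|^\alpha|\bar v_k|^\beta\,dx\le C\,\|(\bar u_k,\bar v_k)\|^{\alpha+\beta}$. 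Combining the two and dividing by $\|(\bar u_k,\bar v_k)\|^2>0$ produces
\[
f(q)\le \theta C\,\|(\bar u_k,\bar v_k)\|^{\alpha+\beta-2},
\]
so that $\|(\bar u_k,\bar v_k)\|\ge c_1:=\big(f(q)/(\theta C)\big)^{1/(\alpha+\beta-2)}>0$, using $\alpha+\beta-2>0$. Feeding this back into \eqref{2k} gives $\int_{\Rn}|\bar u_k|^\alpha|\bar v_k|^\beta\,dx\ge \theta^{-1}f(q)\,c_1^2>0$, and passing to the limit with the strong convergence above we obtain $\int_{\Rn}|\bar u|^\alpha|\bar v|^\beta\,dx>0$. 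Hence $(\bar u,\bar v)\in\mathcal A$, and in particular $\|\bar u\|_p^p,\|\bar v\|_q^q>0$.

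Finally I would run the semicontinuity argument at the level of the Rayleigh quotient $R_n$, with the dilation parameter frozen. Fix $t>0$. The denominator $t^p\|\bar u_k\|_p^p+t^q\|\bar v_k\|_q^q\to t^p\|\bar u\|_p^p+t^q\|\bar v\|_q^q>0$ and the coupling integral converges, whereas the Hilbert norm is weakly lower semicontinuous, namely $\|(\bar u,\bar v)\|^2\le\liminf_k\|(\bar u_k,\bar v_k)\|^2$. Since the norm enters the numerator of $R_n$ with a positive sign, this yields
\[
t^2\|(\bar u,\bar v)\|^2-\theta t^{\alpha+\beta}\!\int_{\Rn}|\bar u|^\alpha|\bar v|^\beta\,dx\;\le\;\liminf_k\Big(t^2\|(\bar u_k,\bar v_k)\|^2-\theta t^{\alpha+\beta}\!\int_{\Rn}|\bar u_k|^\alpha|\bar v_k|^\beta\,dx\Big),
\]
and dividing by the positive convergent denominator gives $R_n(t\bar u,t\bar v)\le \liminf_k R_n(t\bar u_k,t\bar v_k)$. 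Because $R_n(t\bar u_k,t\bar v_k)\le \sup_{s>0}R_n(s\bar u_k,s\bar v_k)=\Lambda_n(\bar u_k,\bar v_k)$ for each $t$, I conclude $R_n(t\bar u,t\bar v)\le\liminf_k\Lambda_n(\bar u_k,\bar v_k)$ for all $t>0$. Taking the supremum over $t>0$ on the left-hand side yields
\[
\Lambda_n(\bar u,\bar v)=\sup_{t>0}R_n(t\bar u,t\bar v)\le \liminf_k\Lambda_n(\bar u_k,\bar v_k),
\]
the desired inequality. The main difficulty is thus concentrated in the second step—ruling out vanishing of the coupling term in the limit, where the normalization and the two-sided bound \eqref{2k} must be played off against the embedding—while the last step is a soft consequence of weak lower semicontinuity once the quotient structure is exploited with $t$ held fixed.
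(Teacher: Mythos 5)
Your proposal is correct and follows essentially the same route as the paper: boundedness from Proposition \ref{Limitada}, strong convergence of the $L^p$, $L^q$ norms and the coupling integral via Lemma \ref{chave}, weak lower semicontinuity of the Hilbert norm in the numerator of $R_n$ with the dilation parameter frozen, and then comparison with $\Lambda_n(\bar{u}_k,\bar{v}_k)=R_n(\bar{u}_k,\bar{v}_k)$ through the normalization $t_n(\bar{u}_k,\bar{v}_k)=1$ (the paper freezes the single value $t=t_n(\bar{u},\bar{v})$, you take the supremum over all $t>0$, which is the same estimate). The only substantive difference is that you verify $(\bar{u},\bar{v})\in\mathcal{A}$ inside the proof, using the two-sided bound \eqref{2k} and the lower norm bound of Proposition \ref{tilde Longe de zero}; the paper omits this check here and supplies it only later, in Proposition \ref{lambda*ating}, so your version is in fact the more self-contained one.
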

	\begin{proof}
		Firstly, the sequence $(\bar{u}_k,\bar{v}_k)$ is bounded in $X$, see Proposition \ref{Limitada}. Hence, there exists $(\bar{u},\bar{v})$ such that $\bar{u}_k \rightharpoonup \bar{u}$ in $X_1$ and $ \bar{v}_k \rightharpoonup \bar{v}$ in $X_2$. Recall that $(u,v) \mapsto \Vert ( u, v ) \Vert^2$ is weakly lower semicontinuous. Furthermore, using Proposition \ref{chave} and Dominated Convergence Theorem, we infer that 
		$$\lim\limits_{k \to \infty}\int_\mathbb{R^N} |\bar{u}_k|^{\alpha} |\bar{v}_k|^{\beta}dx = \int_\mathbb{R^N} |\bar{u}|^{\alpha} |\bar{v}|^{\beta}dx$$
		and 
		$$\lim\limits_{k \to \infty}\Vert \bar{u}_k \Vert _{p} ^{p} = \Vert \bar{u} \Vert _{p} ^{p}, \lim\limits_{k \to \infty}\Vert \bar{v}_k \Vert _{q} ^{q} = \Vert \bar{v} \Vert _{q} ^{q}.$$
		Therefore, the functional $R_n: X \setminus \{0\} \to \mathbb{R}$ satisfies $$R_n(\bar{u},\bar{v}) \le \lim \inf_{k \to \infty} R_n(\bar{u}_k, \bar{v}_k).$$
		Furthermore, by using the fact that $\Lambda_n$ is zero homogeneous, we mention that $R_n(\bar{u}_k, \bar{v}_k) = \Lambda_n(\bar{u}_k, \bar{v}_k) = \Lambda_n(u_k, v_k)$. In particular, by using the fact that $R_n(t u, tv) \leq \Lambda_n(u,v), t \geq 0, (u,v) \in \mathcal{A}$, we obtain that
		\begin{equation*}
			\Lambda_n(\bar{u},\bar{v}) = R_n(t_n(\bar{u},\bar{v})(\bar{u},\bar{v})) \leq	\liminf_{k \to \infty} R_n(t_n(\bar{u},\bar{v})(\bar{u}_k,\bar{v}_k)) \le \liminf_{k \to \infty} R_n(\bar{u}_k, \bar{v}_k) = \liminf_{k \to \infty} \Lambda_n(\bar{u}_k, \bar{v}_k).
		\end{equation*}
		This finishes the proof. 
	\end{proof}
	
	Now, we observe also that the functional $E_\lambda: X \to \mathbb{R}$ is weakly lower semicontinuous. Hence, $E_{_\lambda} (u, v) \le  \liminf_{k \to \infty} E_{_\lambda} (u_k, v_k)$. Similarly, we also obtain that $E'_{_\lambda}(u, v)(u, v) \le  \liminf_{k \to \infty} E'_{_\lambda}(u_k, v_k)(u_k, v_k)$ and $E''_{_\lambda}(u, v)(u, v)^2 \le  \liminf_{k \to \infty} E''_{_\lambda}(u_k, v_k)(u_k, v_k)^2$ where $(u_k,v_k) \rightharpoonup (u,v)$ in $X$ and $(u,v) \in X$.
	\begin{prop}\label{lambda e frac Sem Inf}
		Suppose $(P)$, ($V_0$) and ($V_1$). Let $(u_k, v_k) \in \mathcal{A}$ be a minimizer sequence for $\lambda_*$. Consider the sequence $\bar{u}_k = t_e(u_k, v_k) u_k,	\bar{v}_k = t_e(u_k, v_k)v_k$.
		Then, we obtain that
		$$\Lambda_e(\bar{u},\bar{v}) \le \liminf_{k \to \infty}\Lambda_e(\bar{u}_k,\bar{v}_k)$$
		where $\bar{u}_k \rightharpoonup \bar{u}$ in $X_1$ and $\bar{v}_k \rightharpoonup \bar{v}$ in $X_2$.
	\end{prop}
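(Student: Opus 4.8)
The plan is to follow the blueprint of Proposition~\ref{lambda frac Sem Inf} almost verbatim, replacing the triple $(R_n,\Lambda_n,t_n)$ by $(R_e,\Lambda_e,t_e)$. First I would invoke Proposition~\ref{Limitada2} to guarantee that the modified sequence $(\bar u_k,\bar v_k)$, with $\bar u_k=t_e(u_k,v_k)u_k$ and $\bar v_k=t_e(u_k,v_k)v_k$, is bounded in $X$. Consequently, up to a subsequence, there exists $(\bar u,\bar v)\in X$ with $\bar u_k\rightharpoonup\bar u$ in $X_1$ and $\bar v_k\rightharpoonup\bar v$ in $X_2$. Since $\Lambda_e$ is zero-homogeneous by Proposition~\ref{lema_naru2} and $t_e(\bar u_k,\bar v_k)=1$, we have $R_e(\bar u_k,\bar v_k)=\Lambda_e(\bar u_k,\bar v_k)=\Lambda_e(u_k,v_k)$, so $(\bar u_k,\bar v_k)$ remains a minimizing sequence for $\lambda_*$.

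Next I would establish the semicontinuity of the Rayleigh quotient $R_e$ along this sequence. The numerator of $R_e$ in \eqref{R_e1} splits into the weakly lower semicontinuous term $\tfrac12\|(u,v)\|^2$ and the coupling integral, whereas the denominator consists only of the $L^p$ and $L^q$ terms. By Lemma~\ref{chave} the embedding of $X$ into $L^{r_1}(\mathbb{R}^N)\times L^{r_2}(\mathbb{R}^N)$ is compact for $r_1,r_2\in[1,2^*_s)$, so passing to a further subsequence and applying the Dominated Convergence Theorem yields
\[
\lim_{k\to\infty}\int_{\mathbb{R}^N}|\bar u_k|^\alpha|\bar v_k|^\beta\,dx=\int_{\mathbb{R}^N}|\bar u|^\alpha|\bar v|^\beta\,dx,\qquad \lim_{k\to\infty}\|\bar u_k\|_p^p=\|\bar u\|_p^p,\qquad \lim_{k\to\infty}\|\bar v_k\|_q^q=\|\bar v\|_q^q.
\]
Combining the weak lower semicontinuity of $(u,v)\mapsto\|(u,v)\|^2$ with the convergence of every remaining term gives $R_e(\bar u,\bar v)\le\liminf_{k\to\infty}R_e(\bar u_k,\bar v_k)$. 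The same reasoning, applied with the dilation frozen at the limit, shows that $(u,v)\mapsto R_e\big(t_e(\bar u,\bar v)(u,v)\big)$ is weakly lower semicontinuous as well, since freezing $t_e(\bar u,\bar v)$ only multiplies each term of $R_e$ by a fixed power of that constant.

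Finally I would chain these facts exactly as in Proposition~\ref{lambda frac Sem Inf}. Writing $\Lambda_e(\bar u,\bar v)=R_e\big(t_e(\bar u,\bar v)(\bar u,\bar v)\big)$ and using the semicontinuity with the frozen scaling $t=t_e(\bar u,\bar v)$, together with the elementary bound $R_e\big(t_e(\bar u,\bar v)(\bar u_k,\bar v_k)\big)\le\Lambda_e(\bar u_k,\bar v_k)=R_e(\bar u_k,\bar v_k)$ coming from the definition of $\Lambda_e$ as a supremum and the normalization $t_e(\bar u_k,\bar v_k)=1$, we obtain
\[
\Lambda_e(\bar u,\bar v)\le\liminf_{k\to\infty}R_e\big(t_e(\bar u,\bar v)(\bar u_k,\bar v_k)\big)\le\liminf_{k\to\infty}R_e(\bar u_k,\bar v_k)=\liminf_{k\to\infty}\Lambda_e(\bar u_k,\bar v_k),
\]
which is the desired conclusion.

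The main obstacle I anticipate is justifying that the weak limit $(\bar u,\bar v)$ still belongs to $\mathcal{A}$, so that $t_e(\bar u,\bar v)$ and hence $\Lambda_e(\bar u,\bar v)$ are well defined at all. Here the two-sided estimate \eqref{2ky} from Remark~\ref{integral limitada bar u} is decisive: the lower bound $\theta\int_{\mathbb{R}^N}|\bar u_k|^\alpha|\bar v_k|^\beta\,dx\ge\bar{f}(q)\|(\bar u_k,\bar v_k)\|^2$ prevents the coupling integral from collapsing to zero as long as the sequence does not vanish, which forces $\int_{\mathbb{R}^N}|\bar u|^\alpha|\bar v|^\beta\,dx>0$ in the limit. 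The second delicate point, inherited from the case $p\neq q$ where $t_e$ is only implicit, is precisely why the weak lower semicontinuity of $(u,v)\mapsto R_e\big(t_e(\bar u,\bar v)(u,v)\big)$ must be invoked with the scaling frozen at the limit rather than at each index $k$; this is exactly what legitimizes the frozen-dilation step in the chain above.
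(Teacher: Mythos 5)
Your proposal is correct and takes essentially the same approach as the paper: the paper's own proof of this proposition simply defers to Proposition \ref{lambda frac Sem Inf}, and your argument is exactly that transposition to $(R_e,\Lambda_e,t_e)$ --- boundedness via Proposition \ref{Limitada2}, zero-homogeneity giving $t_e(\bar u_k,\bar v_k)=1$, compact embedding plus weak lower semicontinuity for $R_e$, and the frozen-dilation chain $\Lambda_e(\bar u,\bar v)=R_e\bigl(t_e(\bar u,\bar v)(\bar u,\bar v)\bigr)\le\liminf_{k\to\infty}R_e\bigl(t_e(\bar u,\bar v)(\bar u_k,\bar v_k)\bigr)\le\liminf_{k\to\infty}\Lambda_e(\bar u_k,\bar v_k)$. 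Your additional care that $(\bar u,\bar v)\in\mathcal{A}$, via the lower bound in \eqref{2ky}, mirrors the mechanism the paper itself invokes (Proposition \ref{barLonge de zero}) when this point is needed in Proposition \ref{lambda_*ating}.
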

	\begin{proof}
		The proof follows the same lines discussed in the proof of Proposition \ref{lambda frac Sem Inf}. We omit the details. 
	\end{proof}
	
	\begin{prop}\label{E coersiva}
		Suppose $(P)$, ($V_0$) and ($V_1$). Then the energy functional given in (\ref{Ener}) is coercive in Nehari set $\mathcal{N}_\lambda$.
	\end{prop}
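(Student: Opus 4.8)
The plan is to use the Nehari constraint to eliminate the superlinear coupling term from the energy and then exploit the subquadratic growth of the concave terms. First I would fix an arbitrary $(u,v) \in \mathcal{N}_\lambda$. Since points of the Nehari manifold satisfy $E'_{_\lambda}(u,v)(u,v) = 0$, the identity \eqref{der ener} yields
$$\theta \int_{\mathbb{R}^N} |u|^\alpha |v|^\beta\,dx = \Vert(u,v)\Vert^2 - \lambda\Vert u\Vert_p^p - \lambda\Vert v\Vert_q^q.$$
Substituting this into the expression \eqref{Ener} for the energy, the coupling integral is absorbed and I obtain
$$E_{_\lambda}(u,v) = \left(\frac{1}{2} - \frac{1}{\alpha+\beta}\right)\Vert(u,v)\Vert^2 - \lambda\left(\frac{1}{p} - \frac{1}{\alpha+\beta}\right)\Vert u\Vert_p^p - \lambda\left(\frac{1}{q} - \frac{1}{\alpha+\beta}\right)\Vert v\Vert_q^q.$$

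Next I would record the signs of the three coefficients, which is where hypothesis $(P)$ enters decisively. Because $\alpha+\beta > 2$, the coefficient $\tfrac{1}{2} - \tfrac{1}{\alpha+\beta}$ of the quadratic term is strictly positive; because $p, q < 2 < \alpha+\beta$, both $\tfrac{1}{p} - \tfrac{1}{\alpha+\beta}$ and $\tfrac{1}{q} - \tfrac{1}{\alpha+\beta}$ are positive, so the last two terms are genuinely subtracted. Thus on $\mathcal{N}_\lambda$ the energy equals a positive multiple of $\Vert(u,v)\Vert^2$ minus two quantities controlled by $\Vert u\Vert_p^p$ and $\Vert v\Vert_q^q$.

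To finish, I would dominate the concave terms by powers of the full norm using the continuous embedding of Lemma \ref{chave}: there exists $C > 0$ with $\Vert u\Vert_p \le C\Vert(u,v)\Vert$ and $\Vert v\Vert_q \le C\Vert(u,v)\Vert$, whence
$$E_{_\lambda}(u,v) \ge \left(\frac{1}{2} - \frac{1}{\alpha+\beta}\right)\Vert(u,v)\Vert^2 - \lambda C_1 \Vert(u,v)\Vert^p - \lambda C_2 \Vert(u,v)\Vert^q$$
for suitable constants $C_1, C_2 > 0$. Since $1 \le p \le q < 2$, both exponents on the right are strictly smaller than $2$, so the quadratic term dominates and $E_{_\lambda}(u,v) \to +\infty$ as $\Vert(u,v)\Vert \to \infty$. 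This establishes the coercivity of $E_{_\lambda}$ on $\mathcal{N}_\lambda$.

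There is no serious obstacle in this argument; it is essentially a direct computation. The only point requiring care is the bookkeeping of the coefficient signs, which rests entirely on the ordering $1 \le p \le q < 2 < \alpha+\beta$ in $(P)$: were the coupling not superlinear (that is, $\alpha+\beta > 2$), the substitution would not leave a positive quadratic leading term, and were the concave exponents not subquadratic, the lower-order terms could not be absorbed into the quadratic one.
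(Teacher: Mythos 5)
Your proof is correct and follows essentially the same route as the paper's: you substitute the Nehari identity $E'_{_\lambda}(u,v)(u,v)=0$ into the energy to eliminate the coupling integral, obtaining $E_{_\lambda}(u,v)=\left(\frac{1}{2}-\frac{1}{\alpha+\beta}\right)\Vert(u,v)\Vert^2-\lambda\left(\frac{1}{p}-\frac{1}{\alpha+\beta}\right)\Vert u\Vert_p^p-\lambda\left(\frac{1}{q}-\frac{1}{\alpha+\beta}\right)\Vert v\Vert_q^q$, and then use the embedding of Lemma \ref{chave} to dominate the concave terms by $\Vert(u,v)\Vert^p$ and $\Vert(u,v)\Vert^q$ with $p,q<2$, exactly as the paper does. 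The only differences are cosmetic (the paper writes the negative coefficients as $\lambda\left(-\frac{1}{p}+\frac{1}{\alpha+\beta}\right)$ and in fact contains a small sign typo in its final displayed inequality, which your bookkeeping of signs correctly avoids).
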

	\begin{proof}    
		Let $(u, v) \in \mathcal{N}_{\lambda}$ be a fixed function.  It follows from $E'_{_\lambda}(u, v)(u, v) = 0$ that 
		$$\Vert (u, v) \Vert ^2 - \lambda\Vert u\Vert^{p}_{p} - \lambda\Vert v \Vert^{q}_{q} = \theta\int\limits_{\mathbb{R}^N} |u|^\alpha|v|^\beta dx.$$ 
		As a consequence, we obtain that $$E_{_\lambda}(u, v) = \left ( \frac{1}{2} - \frac{1}{\alpha + \beta} \right )\Vert (u, v) \Vert ^2 + \lambda\left ( -\frac{1}{p} + \frac{1}{\alpha + \beta}\right )\Vert u \Vert^{p}_{p} + \lambda\left ( -\frac{1}{q} + \frac{1}{\alpha + \beta}\right )\Vert v \Vert^{q}_{q}.$$ 
		In light of Proposition \ref{chave} we infer that 
		$$E_{_\lambda}(u, v) \ge \left ( \frac{1}{2} - \frac{1}{\alpha + \beta} \right )\Vert (u, v) \Vert ^2 + \lambda\left ( -\frac{1}{p} + \frac{1}{\alpha + \beta}\right )S_{p}^{p}\Vert u \Vert^{p} + \lambda\left ( -\frac{1}{q} + \frac{1}{\alpha + \beta}\right )S_{q}^{q}\Vert v \Vert^{q}.$$
		Therefore, $$E_{_\lambda}(u, v) \ge C_1 \Vert (u, v) \Vert ^2 + C_2 {max \{\Vert(u, v) \Vert^{p}}, \Vert(u, v)\Vert^{q}\}, (u,v) \in \mathcal{N}_\lambda$$ where $C_1 > 0$ and $C_2 > 0$. In view of hypothesis $(P)$ we deduce that $E_{_\lambda}(u,v) \to +\infty$ as $\Vert (u, v) \Vert \to +\infty$ where $(u,v) \in \mathcal{N}_\lambda$. Hence, $E_{_\lambda}$ is coercive in the Nehari manifold $\mathcal{N}_{\lambda}$. This ends the proof.
	\end{proof}
	
	\begin{prop}\label{tilde Longe de zero}
		Suppose $(P)$, ($V_0$) and ($V_1$). Let $(u_k, v_k) \in \mathcal{A}$ be a minimizer sequence for $\lambda^*$. Then $(\bar{u}_k, \bar{v}_k)$, given in Proposition \eqref{Limitada} is bounded from below by a positive constant. More precisely, we obtain the following assertion: There exists $\delta > 0$ such that
		$\Vert (\bar{u}_k, \bar{v}_k)\Vert \ge \delta > 0, \ k \in \mathbb{N}.$
	\end{prop}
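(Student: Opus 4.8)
The plan is to trap the coupling integral $\theta\int_{\mathbb{R}^N}\vert\bar u_k\vert^\alpha\vert\bar v_k\vert^\beta\,dx$ between a lower bound that is quadratic in $\Vert(\bar u_k,\bar v_k)\Vert$ and an upper bound that is superquadratic, of order $\alpha+\beta$. Since hypothesis $(P)$ gives $\alpha+\beta>2$, these two estimates are simultaneously compatible for small norm only if the norm stays away from zero, which produces the desired $\delta$. First I would recall that, by the construction in Proposition \ref{Limitada}, the normalized sequence satisfies $t_n(\bar u_k,\bar v_k)=1$, so that $R_n'(\bar u_k,\bar v_k)(\bar u_k,\bar v_k)=0$. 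This is precisely the identity \eqref{integra acoplamento}, and inserting it into the elementary mediant inequality (Lemma \ref{ex3.18Elon analise reta}) yields the two-sided estimate \eqref{2k}. In particular, the left-hand inequality gives the lower bound $\theta\int_{\mathbb{R}^N}\vert\bar u_k\vert^\alpha\vert\bar v_k\vert^\beta\,dx\ge f(q)\Vert(\bar u_k,\bar v_k)\Vert^2$, where $f(q)=(2-q)/(\alpha+\beta-q)$ is strictly positive by $(P)$, since $q<2<\alpha+\beta$.

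For the upper bound I would apply H\"older's inequality with the conjugate exponents $(\alpha+\beta)/\alpha$ and $(\alpha+\beta)/\beta$ to obtain $\int_{\mathbb{R}^N}\vert\bar u_k\vert^\alpha\vert\bar v_k\vert^\beta\,dx\le\Vert\bar u_k\Vert_{\alpha+\beta}^\alpha\Vert\bar v_k\Vert_{\alpha+\beta}^\beta$. Because $(P)$ ensures $\alpha+\beta\in(2,2^*_s)\subset[1,2^*_s]$, the continuous embedding of Lemma \ref{chave} applies and bounds each factor by a constant multiple of $\Vert(\bar u_k,\bar v_k)\Vert$; hence $\theta\int_{\mathbb{R}^N}\vert\bar u_k\vert^\alpha\vert\bar v_k\vert^\beta\,dx\le\theta C\Vert(\bar u_k,\bar v_k)\Vert^{\alpha+\beta}$ for some $C>0$ independent of $k$. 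Combining this with the lower bound gives $f(q)\Vert(\bar u_k,\bar v_k)\Vert^2\le\theta C\Vert(\bar u_k,\bar v_k)\Vert^{\alpha+\beta}$. Since $(\bar u_k,\bar v_k)\in\mathcal{A}$ is nontrivial, its norm is positive, so dividing by $\Vert(\bar u_k,\bar v_k)\Vert^2$ and using $\alpha+\beta-2>0$ yields $\Vert(\bar u_k,\bar v_k)\Vert^{\alpha+\beta-2}\ge f(q)/(\theta C)$, that is, $\Vert(\bar u_k,\bar v_k)\Vert\ge\delta:=(f(q)/(\theta C))^{1/(\alpha+\beta-2)}>0$ for every $k$, which is the claim.

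The delicate point is that one cannot read the lower bound off the Rayleigh quotient $R_n$ directly: since $R_n(\bar u_k,\bar v_k)\to\lambda^*<\infty$ while the denominator $\Vert\bar u_k\Vert_p^p+\Vert\bar v_k\Vert_q^q$ degenerates like $\Vert(\bar u_k,\bar v_k)\Vert^p$ as the norm tends to $0$, the naive quotient estimate only returns a lower bound of order $\Vert(\bar u_k,\bar v_k)\Vert^{2-p}\to0$ and is inconclusive. The essential observation is that the normalization $t_n=1$ forces the coupling integral itself to be comparable to $\Vert(\bar u_k,\bar v_k)\Vert^2$ from below, which is where \eqref{2k} and the strict positivity of $f(q)$ enter; this is incompatible with the Sobolev bound of order $\alpha+\beta$ unless the norm is bounded below. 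I would finally check that the constant $C$ from Lemma \ref{chave} and the exponent $f(q)$ are both uniform in $k$, which holds since neither depends on the particular element of $X$.
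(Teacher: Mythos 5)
Your proposal is correct and follows essentially the same route as the paper: the normalization $t_n(\bar u_k,\bar v_k)=1$ gives the identity \eqref{integra acoplamento}, the mediant inequality of Lemma \ref{ex3.18Elon analise reta} yields the quadratic lower bound on the coupling term, and H\"older with exponents $(\alpha+\beta)/\alpha$, $(\alpha+\beta)/\beta$ plus the embedding of Lemma \ref{chave} gives the superquadratic upper bound, forcing $\Vert(\bar u_k,\bar v_k)\Vert\ge\delta$. If anything, your choice of the constant $f(q)$ on the lower side is the one directly justified by \eqref{2k} (the paper's display \eqref{6t} uses $(\alpha+\beta-p)/(2-p)$, i.e.\ $f(p)$, which is the other endpoint of the mediant estimate), so your version is the more carefully stated of the two.
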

	\begin{proof}
		Firstly, by using (\ref{Lambda_n de tn}) together with $\frac{d}{dt}\Lambda_n(t \bar{u}_k, t\bar{{v}_k})\mid _{t=1} = 0$ and Lemma \ref{ex3.18Elon analise reta}, we infer that
		\begin{equation}\begin{array}{lll}\label{6t}
				\begin{array}{c}
					\Vert (\bar{u}_k, \bar{v}_k) \Vert ^2 \le \theta \displaystyle \frac{(\alpha +\beta - p)}{2 - p}  \int_{\mathbb{R}^N}\vert\bar{u}_k\vert ^{\alpha}\vert \bar{v}_k\vert ^{\beta}dx.
				\end{array}
			\end{array}
		\end{equation}
		Now, applying the inequality of H$\ddot{o}$lder and Lemma \ref{chave}, we see that 
		\begin{eqnarray*}
			\Vert(\bar{u}_k,\bar{v}_k)\Vert ^2&\le&\theta \frac{(\alpha +\beta - p)}{2 - p} \left (\int\limits_\mathbb{R^N} \vert \bar{u}_k\vert^{\alpha \frac{\alpha + \beta}{\alpha}}dx\right )^\frac{\alpha}{\alpha + \beta} \left (\int_\mathbb{R^N}\vert\bar{v}_k \vert^{\beta \frac{\alpha + \beta}{\beta}} dx\right )^{\frac{\beta}{\alpha + \beta}}\\ 
			&\le&\theta \frac{(\alpha +\beta - p)}{2 - p} \Vert \bar{u}_k\Vert_{\alpha + \beta}^{\alpha} \Vert\bar{v}_k \Vert_{\alpha + \beta}^{\beta} \\
			&\le&\theta S_{\alpha + \beta}^{\alpha + \beta} \frac{(\alpha +\beta - p)}{2 - p} \Vert \bar{u}_k\Vert^{\alpha} \Vert\bar{v}_k \Vert^{\beta}
			\le\theta S_{\alpha + \beta}^{\alpha + \beta} \frac{(\alpha +\beta - p)}{2 - p} \Vert (\bar{u}_k, \bar{v}_k)\Vert^{\alpha + \beta}.\\
		\end{eqnarray*}
		As a consequence, we obtain that
		\begin{equation*}
			\Vert(\bar{u}_k,\bar{v}_k)\Vert \geq \delta : =\left (  \frac{2 - p}{(\alpha +\beta - p)\theta S_{\alpha + \beta}^{\alpha + \beta}}\right )^{\frac{1}{\alpha + \beta - 2}}.
		\end{equation*}
		This ends the proof.
	\end{proof}
	
	\begin{prop}\label{barLonge de zero}
		Suppose $(P)$, ($V_0$) and ($V_1$). Let $(u_k, v_k) \in \mathcal{A}$ be a minimizer sequence for $\lambda_*$. Then the sequence $(\bar{u}_k, \bar{v}_k)$ given by Proposition \ref{Limitada2} is bounded from below by a positive constant, i.e, we obtain the following assertion: There exists $\delta > 0$ such that $\Vert (\bar{u}_k, \bar{v}_k)\Vert \ge \bar{\delta} > 0$ holds for each $k \in \mathbb{N}$.
	\end{prop}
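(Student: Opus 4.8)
The plan is to follow the argument of Proposition \ref{tilde Longe de zero} essentially verbatim, replacing the objects attached to $t_n$ and $\lambda^*$ by those attached to $t_e$ and $\lambda_*$. Recall that $(\bar{u}_k, \bar{v}_k)$ is the sequence of Proposition \ref{Limitada2}, given by $\bar{u}_k = t_e(u_k, v_k) u_k$ and $\bar{v}_k = t_e(u_k, v_k) v_k$. Since $\Lambda_e$ is $0$-homogeneous (Proposition \ref{lema_naru2}) we have $t_e(\bar{u}_k, \bar{v}_k) = 1$, so the fibering identity $\frac{d}{dt} R_e(t\bar{u}_k, t\bar{v}_k)\big|_{t=1} = 0$ holds. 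This is exactly the hypothesis that produces the coupling identity \eqref{integra acoplamento bar} in Remark \ref{integral limitada bar u}, and hence, via Lemma \ref{ex3.18Elon analise reta}, the two-sided bound \eqref{2ky}. I would use only its left-hand half, namely $\bar{f}(q)\,\Vert(\bar{u}_k,\bar{v}_k)\Vert^2 \le \theta \int_{\mathbb{R}^N} |\bar{u}_k|^\alpha |\bar{v}_k|^\beta\,dx$, where $\bar f(q) > 0$.

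Next I would bound the coupling integral from above exactly as in Proposition \ref{tilde Longe de zero}: applying Hölder's inequality with conjugate exponents $\frac{\alpha+\beta}{\alpha}$ and $\frac{\alpha+\beta}{\beta}$ followed by the embedding of Lemma \ref{chave} gives
\[
\int_{\mathbb{R}^N} |\bar{u}_k|^\alpha |\bar{v}_k|^\beta\,dx \le \Vert \bar{u}_k\Vert_{\alpha+\beta}^{\alpha}\,\Vert \bar{v}_k\Vert_{\alpha+\beta}^{\beta} \le S_{\alpha+\beta}^{\alpha+\beta}\,\Vert \bar{u}_k\Vert^{\alpha}\,\Vert \bar{v}_k\Vert^{\beta} \le S_{\alpha+\beta}^{\alpha+\beta}\,\Vert(\bar{u}_k,\bar{v}_k)\Vert^{\alpha+\beta}.
\]
Combining this with the lower bound extracted from \eqref{2ky} yields $\bar f(q)\,\Vert(\bar{u}_k,\bar{v}_k)\Vert^2 \le \theta\, S_{\alpha+\beta}^{\alpha+\beta}\,\Vert(\bar{u}_k,\bar{v}_k)\Vert^{\alpha+\beta}$. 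Since $(\bar{u}_k,\bar{v}_k)\in\mathcal{A}$ forces $\Vert(\bar{u}_k,\bar{v}_k)\Vert>0$, I may divide by $\Vert(\bar{u}_k,\bar{v}_k)\Vert^2$ and invoke $\alpha+\beta>2$ to obtain
\[
\Vert(\bar{u}_k,\bar{v}_k)\Vert \ge \bar{\delta} := \left(\frac{\bar f(q)}{\theta\, S_{\alpha+\beta}^{\alpha+\beta}}\right)^{\frac{1}{\alpha+\beta-2}} > 0,
\]
which is independent of $k$ and is the desired conclusion.

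This is a near-transcription of Proposition \ref{tilde Longe de zero}, so I do not anticipate any serious obstacle; the estimate is purely algebraic once \eqref{2ky} is in hand. The single point that genuinely requires checking is that the constant $\bar f(q)$ furnished by Lemma \ref{ex3.18Elon analise reta} is strictly positive, since otherwise the final division would be vacuous. This is guaranteed by $1 \le p \le q < 2 < \alpha+\beta$, which makes both $\frac{1}{q}-\frac{1}{2}$ and $\frac{1}{q}-\frac{1}{\alpha+\beta}$ positive, so that the relevant coefficients appearing in \eqref{integra acoplamento bar} are genuine positive constants. With that verified the bound closes uniformly in $k$.
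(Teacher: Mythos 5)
Your proof is correct and follows essentially the same route as the paper, which likewise combines $\frac{d}{dt}R_e(t\bar{u}_k, t\bar{v}_k)\big|_{t=1}=0$ with the estimate \eqref{2ky} and then H\"older's inequality plus the embedding of Lemma \ref{chave}. Your constant $\bar{\delta}$ agrees with the paper's $\left(\frac{(\alpha+\beta)f(q)}{2\theta S_{\alpha+\beta}^{\alpha+\beta}}\right)^{\frac{1}{\alpha+\beta-2}}$, since $\bar{f}(q)=\frac{(\alpha+\beta)f(q)}{2}$ with $f(x)=\frac{2-x}{\alpha+\beta-x}$.
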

	\begin{proof}
		Initially, using the same ideas discussed in the proof Proposition \ref{tilde Longe de zero} and taking into account that  $\frac{d}{dt}R_e(t \bar{u}_k, t\bar{{v}_k})\mid _{t=1}=0$, we obtain
		\begin{eqnarray*}
			\Vert(\bar{u}_k,\bar{v}_k)\Vert &\ge&\left (  \frac{(\alpha + \beta)f(q)}{2 \theta S_{\alpha + \beta}^{\alpha + \beta}}\right )^{\frac{1}{\alpha + \beta - 2}} = \bar{\delta} > 0.
		\end{eqnarray*}
		Notice also that the function $f:(1,2) \to \mathbb{R}$ is defined by  $f(x) = \frac{2 - x}{\alpha + \beta - x}$, see Lemma \ref{ex3.18Elon analise reta} in the Appendix.
		This finishes the proof.
	\end{proof}
	\begin{prop}\label{lambda*ating}
		Suppose $(P)$, ($V_0$) and ($V_1$). Then there exists $(u, v)\in \mathcal{A}$  such that  $$\lambda^* = \inf_{(z, w) \in \mathcal{A}}\Lambda_n(z,w) = \Lambda_n(u,v).$$ 
		Hence, the number $\lambda^* > 0$ is attained.
	\end{prop}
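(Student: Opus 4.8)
The plan is to apply the direct method of the calculus of variations to the functional $\Lambda_n$, the only subtlety being that its natural domain $\mathcal{A}$ is open, so one must prevent the minimizing sequence from escaping $\mathcal{A}$ in the weak limit. First I would fix a minimizing sequence $(u_k, v_k) \in \mathcal{A}$ for $\lambda^* = \inf_{(z,w) \in \mathcal{A}} \Lambda_n(z,w)$. By Proposition \ref{Limitada} together with the $0$-homogeneity of $\Lambda_n$ established in Proposition \ref{lema_naru1}, I would pass to the renormalized sequence $\bar{u}_k = t_n(u_k,v_k)\, u_k$, $\bar{v}_k = t_n(u_k,v_k)\, v_k$, which is bounded in $X$, satisfies $t_n(\bar{u}_k,\bar{v}_k) = 1$, and still obeys $\Lambda_n(\bar{u}_k,\bar{v}_k) = \Lambda_n(u_k,v_k) \to \lambda^*$. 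Since $X$ is a Hilbert space, up to a subsequence $\bar{u}_k \rightharpoonup \bar{u}$ in $X_1$ and $\bar{v}_k \rightharpoonup \bar{v}$ in $X_2$; by the compact embedding in Lemma \ref{chave} one also gets $\bar{u}_k \to \bar{u}$ in $L^p(\mathbb{R}^N)$, $\bar{v}_k \to \bar{v}$ in $L^q(\mathbb{R}^N)$, and, via H\"older and dominated convergence, $\int_{\mathbb{R}^N} |\bar{u}_k|^\alpha |\bar{v}_k|^\beta dx \to \int_{\mathbb{R}^N} |\bar{u}|^\alpha |\bar{v}|^\beta dx$.

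The main obstacle, and essentially the heart of the proof, is to show that the weak limit $(\bar{u},\bar{v})$ remains in $\mathcal{A}$, that is, $\int_{\mathbb{R}^N} |\bar{u}|^\alpha |\bar{v}|^\beta dx > 0$; without this $\Lambda_n(\bar{u},\bar{v})$ is not even defined. Here I would combine the uniform lower bound $\Vert(\bar{u}_k,\bar{v}_k)\Vert \ge \delta > 0$ from Proposition \ref{tilde Longe de zero} with the estimate \eqref{6t}, which reads $\Vert(\bar{u}_k,\bar{v}_k)\Vert^2 \le \theta \frac{\alpha+\beta-p}{2-p} \int_{\mathbb{R}^N} |\bar{u}_k|^\alpha |\bar{v}_k|^\beta dx$. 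Since $p < 2 < \alpha+\beta$, rearranging yields the uniform estimate $\int_{\mathbb{R}^N} |\bar{u}_k|^\alpha |\bar{v}_k|^\beta dx \ge \frac{(2-p)\,\delta^2}{\theta(\alpha+\beta-p)} > 0$, and passing to the limit through the strong convergence of the coupling integral gives $\int_{\mathbb{R}^N} |\bar{u}|^\alpha |\bar{v}|^\beta dx \ge \frac{(2-p)\,\delta^2}{\theta(\alpha+\beta-p)} > 0$. Hence $(\bar{u},\bar{v}) \in \mathcal{A}$, which is precisely the non-degeneracy one needs to keep the limit in the domain of $\Lambda_n$.

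With the limit secured inside $\mathcal{A}$, I would close the argument by the weak lower semicontinuity of $\Lambda_n$ along this sequence, namely Proposition \ref{lambda frac Sem Inf}, which gives $\Lambda_n(\bar{u},\bar{v}) \le \liminf_{k\to\infty} \Lambda_n(\bar{u}_k,\bar{v}_k) = \lambda^*$. Conversely, since $(\bar{u},\bar{v}) \in \mathcal{A}$, the very definition of the infimum forces $\Lambda_n(\bar{u},\bar{v}) \ge \lambda^*$. Combining the two inequalities gives $\Lambda_n(\bar{u},\bar{v}) = \lambda^*$, so the infimum is attained at $(u,v) := (\bar{u},\bar{v}) \in \mathcal{A}$, which is the claim. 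Finally, the strict positivity $\lambda^* > 0$ is then immediate from the asymptotic behaviour of $Q_n$ displayed above: since $R_n(t\bar u, t\bar v) \to 0^+$ with positive slope as $t \to 0^+$ and $R_n(t\bar u, t\bar v) \to -\infty$ as $t \to \infty$, the maximum $\Lambda_n(\bar u, \bar v) = \max_{t>0} R_n(t\bar u, t\bar v)$ is strictly positive, whence $\lambda^* = \Lambda_n(\bar u, \bar v) > 0$.
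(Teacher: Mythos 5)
Your proposal is correct and follows essentially the same route as the paper: renormalize the minimizing sequence via $t_n$ (Proposition \ref{Limitada}), extract a weak limit, keep it inside $\mathcal{A}$ by combining the estimate \eqref{6t} with the uniform norm lower bound, and conclude by the weak lower semicontinuity of Proposition \ref{lambda frac Sem Inf} sandwiched against the definition of the infimum. If anything, you are slightly more careful than the paper: you make the uniform lower bound on $\int_{\mathbb{R}^N}|\bar{u}_k|^\alpha|\bar{v}_k|^\beta\,dx$ explicit, you cite the correct norm bound (Proposition \ref{tilde Longe de zero}, where the paper's proof cites the $\lambda_*$ analogue \ref{barLonge de zero}), and you supply the positivity $\lambda^*>0$ directly from $Q_n(t)>0$ for small $t$, which the paper defers to Proposition \ref{esquema}.
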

	\begin{proof}
		According to Proposition \ref{Limitada} we mention that $(\bar{u}_k, \bar{v}_k)$ is a bounded sequence in $X$. Hence, there exists $(\bar{u}, \bar{v}) \in X$ such that $(\bar{u}_k, \bar{v}_k)\rightharpoonup (\bar u, \bar v)$ in $X$. Moreover, by using \eqref{6t} and Proposition \ref{barLonge de zero}, we deduce that $(\bar u, \bar v) \in \mathcal{A}$. Under these conditions, by using Proposition \ref{lambda frac Sem Inf}, we obtain that
		$$\lambda^*\le \Lambda_n(\bar u, \bar v) \le \liminf_{k \to +\infty} \Lambda_n(\bar{u}_k, \bar{v}_k) = \lambda^*.$$
		In particular, we infer that $\lambda^* = \Lambda_n(\bar u, \bar v) = \inf_{(z, w) \in \mathcal{A}} \Lambda_n(z,w)$.
		Therefore, $\lambda^*$ is attained. This ends the proof.
	\end{proof}
	
	\begin{prop}\label{lambda_*ating}
		Suppose $(P)$, ($V_0$) and ($V_1$). Then there exists $(\bar{u}, \bar{v})\in \mathcal{A}$  such that  $$\lambda_* = \inf_{(z, w) \in \mathcal{A}}\Lambda_e(z,w) = \Lambda_e(\bar{u}, \bar{v}). $$ 
		In other words, the number $\lambda_* > 0$ is attained.
	\end{prop}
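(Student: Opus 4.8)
The plan is to mirror the argument already used for $\lambda^*$ in Proposition \ref{lambda*ating}, replacing the functional $\Lambda_n$ by $\Lambda_e$ and the projection $t_n$ by $t_e$. First I would fix a minimizing sequence $(u_k, v_k) \in \mathcal{A}$ for $\lambda_* = \inf_{(z,w) \in \mathcal{A}} \Lambda_e(z,w)$ and pass to the rescaled sequence $\bar{u}_k = t_e(u_k, v_k) u_k$, $\bar{v}_k = t_e(u_k, v_k) v_k$. Since $\Lambda_e$ is zero-homogeneous by Proposition \ref{lema_naru2}, we have $\Lambda_e(\bar{u}_k, \bar{v}_k) = \Lambda_e(u_k, v_k)$, so $(\bar{u}_k, \bar{v}_k)$ remains a minimizing sequence, and by Proposition \ref{Limitada2} it is bounded in $X$. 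Reflexivity then yields a subsequence with $\bar{u}_k \rightharpoonup \bar{u}$ in $X_1$ and $\bar{v}_k \rightharpoonup \bar{v}$ in $X_2$.

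The key step, which I expect to be the main obstacle, is to show that the weak limit $(\bar{u}, \bar{v})$ belongs to the open set $\mathcal{A}$, that is, $\int_{\mathbb{R}^N} |\bar{u}|^\alpha |\bar{v}|^\beta \, dx > 0$; otherwise the sandwich argument would take place outside the domain of $\Lambda_e$. For this I would combine the uniform lower bound $\Vert(\bar{u}_k, \bar{v}_k)\Vert \ge \bar{\delta} > 0$ from Proposition \ref{barLonge de zero} with the estimate \eqref{2ky}, which controls the coupling integral from below by $\bar{f}(q)\Vert(\bar{u}_k, \bar{v}_k)\Vert^2$. By the compact embedding of Lemma \ref{chave} (together with the Dominated Convergence Theorem), up to a further subsequence the coupling integral converges, namely $\int_{\mathbb{R}^N} |\bar{u}_k|^\alpha |\bar{v}_k|^\beta \, dx \to \int_{\mathbb{R}^N} |\bar{u}|^\alpha |\bar{v}|^\beta \, dx$. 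Passing to the limit in \eqref{2ky} and using $\Vert(\bar{u}_k, \bar{v}_k)\Vert^2 \ge \bar{\delta}^2$ forces $\int_{\mathbb{R}^N} |\bar{u}|^\alpha |\bar{v}|^\beta \, dx \ge \theta^{-1}\bar{f}(q)\bar{\delta}^2 > 0$, so indeed $(\bar{u}, \bar{v}) \in \mathcal{A}$.

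Finally, I would invoke the weak lower semicontinuity of $\Lambda_e$ established in Proposition \ref{lambda e frac Sem Inf} to obtain the chain
$$\lambda_* \le \Lambda_e(\bar{u}, \bar{v}) \le \liminf_{k \to \infty} \Lambda_e(\bar{u}_k, \bar{v}_k) = \lim_{k \to \infty} \Lambda_e(u_k, v_k) = \lambda_*,$$
where the first inequality is the definition of the infimum over $\mathcal{A}$ (legitimate now that $(\bar{u}, \bar{v}) \in \mathcal{A}$) and the last equality uses that $(u_k, v_k)$ is minimizing together with the zero-homogeneity of $\Lambda_e$. This squeezes $\Lambda_e(\bar{u}, \bar{v}) = \lambda_*$, so the infimum is attained at $(\bar{u}, \bar{v}) \in \mathcal{A}$. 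The strict positivity $\lambda_* > 0$ then follows from the lower bounds already encoded in \eqref{2ky} and Proposition \ref{barLonge de zero}, exactly as in the treatment of $\lambda^*$.
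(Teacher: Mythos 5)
Your proof is correct and takes essentially the same route as the paper, whose own proof simply transfers the argument of Proposition \ref{lambda*ating} to $\Lambda_e$ via Proposition \ref{Limitada2}, Proposition \ref{barLonge de zero} and Proposition \ref{lambda e frac Sem Inf}. You even spell out the step the paper leaves implicit, namely that $(\bar{u},\bar{v})\in\mathcal{A}$ follows from combining \eqref{2ky}, the uniform lower bound $\Vert(\bar{u}_k,\bar{v}_k)\Vert\ge\bar{\delta}$ and the compact embedding of Lemma \ref{chave}, so nothing further is needed.
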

	\begin{proof}
		Firstly, by using the same ideas discussed in the proof of Proposition \ref{lambda*ating} and taking into account Proposition \ref{lambda e frac Sem Inf}, we infer that
		$$\lambda_* \le \Lambda_e(\bar u, \bar v) \le \liminf_{k \to +\infty} \Lambda_e(\bar{u}_k, \bar{v}_k) = \lambda_*,$$
		where $(\bar{u}_k, \bar{v}_k) \rightharpoonup (\bar u, \bar v)$ in $X$. Here we mention that the sequence $(\bar{u}_k, \bar{v}_k)$ is bounded in $X$, see Proposition \ref{Limitada2}. Furthermore, by using Proposition \ref{barLonge de zero}, we see that $(\bar u, \bar v) \in \mathcal{A}$. 
		Therefore, $\lambda_* = \Lambda_e(\bar u, \bar v) = \inf_{(z, w) \in \mathcal{A}}\Lambda_e(z,w)$. This ends the proof.
	\end{proof}
	
	\begin{prop}\label{esquema}
		Suppose $(P)$, ($V_0$) and ($V_1$). Then there exists $C_\delta > 0$ such that $\lambda^*\ge C_\delta > 0$. 
	\end{prop}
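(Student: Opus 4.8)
The plan is to show directly that $\Lambda_n(u,v) \ge C_\delta$ for every $(u,v) \in \mathcal{A}$, so that passing to the infimum in the definition of $\lambda^*$ yields $\lambda^* \ge C_\delta > 0$; equivalently, one may simply evaluate the estimate at the minimizer produced in Proposition \ref{lambda*ating}. First I would normalize: given $(u,v) \in \mathcal{A}$, set $(\bar u, \bar v) = t_n(u,v)(u,v)$, so that $t_n(\bar u, \bar v) = 1$ and, by the zero-homogeneity of $\Lambda_n$ (Proposition \ref{lema_naru1}), $\Lambda_n(u,v) = R_n(\bar u, \bar v)$. Since $t = 1$ is the critical point of $t \mapsto R_n(t\bar u, t\bar v)$, the identity \eqref{integra acoplamento} holds for $(\bar u, \bar v)$.

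The key computation is to insert this identity into $R_n(\bar u, \bar v)$. Writing $A = \Vert \bar u\Vert_p^p$ and $B = \Vert\bar v\Vert_q^q$, the numerator becomes
\[
\Vert(\bar u,\bar v)\Vert^2 - \theta\int_{\mathbb{R}^N}|\bar u|^\alpha|\bar v|^\beta\,dx = \Vert(\bar u,\bar v)\Vert^2\left(1 - \frac{(2-p)A + (2-q)B}{(\alpha+\beta-p)A + (\alpha+\beta-q)B}\right),
\]
where the terms involving $2-p$ and $2-q$ cancel against part of the denominator, leaving the clean factor $(\alpha+\beta-2)(A+B)$. Hence
\[
\Lambda_n(u,v) = R_n(\bar u,\bar v) = \frac{(\alpha+\beta-2)\,\Vert(\bar u,\bar v)\Vert^2}{(\alpha+\beta-p)A + (\alpha+\beta-q)B},
\]
and by hypothesis $(P)$ every factor here is positive.

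Next I would bound the denominator from above. Since $p \le q$ we have $\alpha+\beta-q \le \alpha+\beta-p$, so the denominator is at most $(\alpha+\beta-p)(A+B)$, and the continuous embeddings of Lemma \ref{chave} give $A = \Vert\bar u\Vert_p^p \le S_p^p\Vert(\bar u,\bar v)\Vert^p$ and $B = \Vert\bar v\Vert_q^q \le S_q^q\Vert(\bar u,\bar v)\Vert^q$. Setting $t = \Vert(\bar u,\bar v)\Vert$, this yields
\[
\Lambda_n(u,v) \ge \frac{(\alpha+\beta-2)}{(\alpha+\beta-p)\left(S_p^p\,t^{p-2} + S_q^q\,t^{q-2}\right)}.
\]
Finally, Proposition \ref{tilde Longe de zero} supplies the uniform lower bound $t = \Vert(\bar u,\bar v)\Vert \ge \delta > 0$, an estimate that depends only on $p,\alpha,\beta,\theta,S_{\alpha+\beta}$ and uses only the critical relation at $t=1$, hence applies to every normalized element of $\mathcal{A}$. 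Because $p,q < 2$ the exponents $p-2$ and $q-2$ are negative, so $t \ge \delta$ forces $t^{p-2} \le \delta^{p-2}$ and $t^{q-2} \le \delta^{q-2}$; therefore
\[
\Lambda_n(u,v) \ge \frac{(\alpha+\beta-2)}{(\alpha+\beta-p)\left(S_p^p\,\delta^{p-2} + S_q^q\,\delta^{q-2}\right)} =: C_\delta > 0.
\]

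I expect the main obstacle to be twofold. The delicate algebraic point is the simplification of the numerator via \eqref{integra acoplamento}: one must check that the $p$- and $q$-dependent contributions cancel exactly to leave the factor $(\alpha+\beta-2)(A+B)$, which is precisely what makes the closed form available even when $p \neq q$. The second, more conceptual point is that the lower bound $\delta$ is indispensable and must be used in the correct direction: since $t^{p-2},t^{q-2} \to \infty$ as $t \to 0^+$, no positive lower bound for $\Lambda_n$ could survive without Proposition \ref{tilde Longe de zero}, and it is exactly the negativity of $p-2$ and $q-2$ that converts $t \ge \delta$ into the required upper bound on the denominator.
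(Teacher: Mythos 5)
Your proof is correct and follows essentially the same route as the paper's: normalize so that $t_n(\bar u,\bar v)=1$, use the critical-point identity \eqref{integra acoplamento} to eliminate the coupling integral from $R_n$, bound the resulting denominator via the embeddings of Lemma \ref{chave}, and invoke the uniform lower bound $\Vert(\bar u,\bar v)\Vert\ge\delta$ of Proposition \ref{tilde Longe de zero} (whose proof, as you correctly observe, uses only the criticality relation at $t=1$ and hence applies to every normalized element of $\mathcal{A}$). Your only deviations are streamlinings rather than a different method: you carry out the cancellation exactly, obtaining the closed form $R_n(\bar u,\bar v)=(\alpha+\beta-2)\Vert(\bar u,\bar v)\Vert^2\big/\bigl[(\alpha+\beta-p)\Vert\bar u\Vert_p^p+(\alpha+\beta-q)\Vert\bar v\Vert_q^q\bigr]$ where the paper settles for the one-sided estimate $\bigl(1-f(p)\bigr)\Vert(\bar u,\bar v)\Vert^2/\bigl(\Vert\bar u\Vert_p^p+\Vert\bar v\Vert_q^q\bigr)$ via Lemma \ref{ex3.18Elon analise reta}; you bound $\Lambda_n$ pointwise on all of $\mathcal{A}$ instead of along a minimizing sequence; and by keeping both terms $t^{p-2}$, $t^{q-2}$ and using that these exponents are negative, you avoid the paper's case split $\Vert(\bar u_k,\bar v_k)\Vert>1$ versus $\Vert(\bar u_k,\bar v_k)\Vert\le 1$ with $S=\max\{S_p^p,S_q^q\}$.
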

	\begin{proof}
		Consider the sequence $(\bar{u}_k, \bar{v}_k)$ given in Proposition \ref{Limitada}. It follows from (\ref{2k}) and (\ref{Lambda_n de tn}) that  
		$$\Lambda_n(\bar{u}_k, \bar{v}_k) \ge \frac{\left ( 1 - f(p) \right ) \Vert (\bar{u}_k, \bar{v}_k)\Vert^2}{\Vert \bar{u}_k\Vert^{p}_{p} + \Vert \bar {v}_k\Vert_{q}^{q}}.$$
		Here we mention that $f: (1, 2) \to \mathbb{R}$ is given by $f(x) =  (2 - x)/(\alpha +\beta - x)$. Using the Sobolev embedding given in Lemma \ref{chave}, we obtain that
		$$\Lambda_n(\bar{u}_k, \bar{v}_k) \ge \frac{\left ( 1 - f(p) \right )  \Vert (\bar{u}_k, \bar{v}_k)\Vert^2}{S_p^p\Vert \bar{u}_k\Vert^{p} + S_q^q\Vert \bar {v}_k\Vert^{q}}  \ge \frac{\left ( 1 - f(p) \right )  \Vert (\bar{u}_k, \bar{v}_k)\Vert^2}{S_p^p\Vert (\bar{u}_k, \bar{v}_k)\Vert^{p} + S_q^q\Vert (\bar{u}_k, \bar {v}_k)\Vert^{q}}.$$
		Define  $S = \max\{S_p^p, S_q^q\}$. Assuming $\Vert (\bar{u}_k, \bar{v}_k)\Vert >1$ the last assertion implies that
		$$\Lambda_n(\bar{u}_k, \bar{v}_k) \ge \frac{\left ( 1 - f(p) \right )  \Vert (\bar{u}_k, \bar{v}_k)\Vert^2}{2S\Vert (\bar{u}_k, \bar{v}_k)\Vert^{q}}.$$
		In view of Proposition \ref{tilde Longe de zero} we obtain that 
		$$\Lambda_n(\bar{u}_k, \bar{v}_k) \ge \delta_2 = \frac{( 1 - f(p))\delta ^{2 - q}}{2S}.$$

		It remains to consider the case $\Vert (\bar{u}_k, \bar {v}_k)\Vert \leq 1$. Under these conditions, by using Proposition \ref{tilde Longe de zero}, we observe that  
		$$\Lambda_n(\bar{u}_k, \bar{v}_k) \ge \frac{( 1 - f(p))\Vert (\bar{u}_k, \bar{v}_k)\Vert^2}{2S} \geq \delta_3 = \frac{( 1 - f(p))\delta^2}{2S}.$$
		As a consequence, we infer that
		$\Lambda_n(\bar{u}_k, \bar{v}_k) \ge \min\{\delta_2, \delta_3\} > 0$ for each $ k \in \mathbb{N}$.
		In particular, there exists $C_\delta > 0$ such that $$\lambda^*= \inf\limits_{(z, w)\in \mathcal{A}}\Lambda_n(z, w) = \displaystyle \lim_{k \to +\infty} \Lambda_n(\bar{u}_k, \bar{v}_k) \ge C_\delta > 0.$$ This ends the proof.
	\end{proof}

	\begin{prop}\label{esquema2}
		Suppose $(P)$, ($V_0$) and ($V_1$). Then there exists $C_\delta > 0$ such that  $\lambda_*\ge \bar{C}_\delta > 0$. 
	\end{prop}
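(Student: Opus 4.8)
The plan is to transcribe, almost verbatim, the argument of Proposition \ref{esquema}, replacing the nonlinear Rayleigh quotient $R_n$ by $R_e$, the functional $\Lambda_n$ by $\Lambda_e$, the projection $t_n$ by $t_e$, and the estimate \eqref{2k} by its counterpart \eqref{2ky}. First I would take a minimizing sequence $(u_k, v_k) \in \mathcal{A}$ for $\lambda_*$ and pass to the rescaled sequence $\bar{u}_k = t_e(u_k, v_k) u_k$, $\bar{v}_k = t_e(u_k, v_k) v_k$ produced in Proposition \ref{Limitada2}, which is bounded in $X$. Since $\Lambda_e$ is zero-homogeneous (Proposition \ref{lema_naru2}) we have $t_e(\bar{u}_k, \bar{v}_k) = 1$, hence $\Lambda_e(\bar{u}_k, \bar{v}_k) = R_e(\bar{u}_k, \bar{v}_k)$ and $\left.\frac{d}{dt}R_e(t\bar{u}_k, t\bar{v}_k)\right|_{t=1} = 0$, so that the inequality \eqref{2ky} from Remark \ref{integral limitada bar u} applies: $\theta \int_{\mathbb{R}^N} |\bar{u}_k|^\alpha |\bar{v}_k|^\beta dx \le \bar{f}(p)\Vert(\bar{u}_k, \bar{v}_k)\Vert^2$.

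Next I would insert this bound into the numerator of $R_e$. Recalling that $R_e(\bar{u}_k,\bar{v}_k)$ has numerator $\frac{1}{2}\Vert(\bar{u}_k,\bar{v}_k)\Vert^2 - \frac{\theta}{\alpha+\beta}\int_{\mathbb{R}^N}|\bar{u}_k|^\alpha|\bar{v}_k|^\beta dx$, the estimate above gives
$$\Lambda_e(\bar{u}_k, \bar{v}_k) \ge \frac{\left(\frac{1}{2} - \frac{\bar{f}(p)}{\alpha+\beta}\right)\Vert(\bar{u}_k, \bar{v}_k)\Vert^2}{\frac{1}{p}\Vert \bar{u}_k\Vert_p^p + \frac{1}{q}\Vert \bar{v}_k\Vert_q^q}.$$
I would then control the denominator from above by the Sobolev embedding of Lemma \ref{chave}, bounding $\frac{1}{p}\Vert\bar{u}_k\Vert_p^p + \frac{1}{q}\Vert\bar{v}_k\Vert_q^q$ by a constant times $\max\{\Vert(\bar{u}_k,\bar{v}_k)\Vert^p, \Vert(\bar{u}_k,\bar{v}_k)\Vert^q\}$. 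As in Proposition \ref{esquema}, the conclusion then follows by splitting into the two cases $\Vert(\bar{u}_k,\bar{v}_k)\Vert > 1$ and $\Vert(\bar{u}_k,\bar{v}_k)\Vert \le 1$, and in each case invoking the uniform lower bound $\Vert(\bar{u}_k,\bar{v}_k)\Vert \ge \bar{\delta} > 0$ from Proposition \ref{barLonge de zero} to obtain a positive constant independent of $k$; taking the infimum over the minimizing sequence yields $\lambda_* = \lim_{k\to\infty}\Lambda_e(\bar{u}_k,\bar{v}_k) \ge \bar{C}_\delta > 0$.

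The one genuinely new point, and the step I expect to require the most care, is the positivity of the coefficient $\frac{1}{2} - \frac{\bar{f}(p)}{\alpha+\beta}$ multiplying $\Vert(\bar{u}_k,\bar{v}_k)\Vert^2$ in the numerator: the whole lower bound is vacuous unless this quantity is strictly positive. Here $\bar{f}(p) = \frac{1/p - 1/2}{1/p - 1/(\alpha+\beta)}$, and since $1 \le p < 2 < \alpha + \beta$ both the numerator and denominator of $\bar{f}(p)$ are positive while the numerator is strictly smaller (because $1/2 > 1/(\alpha+\beta)$), so $\bar{f}(p) \in (0,1)$. Consequently $\frac{\bar{f}(p)}{\alpha+\beta} < \frac{1}{\alpha+\beta} < \frac{1}{2}$, which secures the required strict positivity. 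Everything else is a routine adaptation of the proof of Proposition \ref{esquema}, so I would simply indicate that the remaining computations proceed along the same lines.
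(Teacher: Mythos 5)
Your proposal is correct and follows essentially the same route as the paper: the paper's own proof is exactly the sketch you transcribe, namely reusing the scheme of Proposition \ref{esquema} with $R_e$, $\Lambda_e$, $t_e$, the bound \eqref{2ky}, the Sobolev embedding of Lemma \ref{chave}, and the lower bound $\Vert(\bar{u}_k,\bar{v}_k)\Vert \ge \bar{\delta}$ from Proposition \ref{barLonge de zero}, to arrive at $\Lambda_e(\bar{u}_k,\bar{v}_k) \ge C\min\{\Vert(\bar{u}_k,\bar{v}_k)\Vert^{2-p}, \Vert(\bar{u}_k,\bar{v}_k)\Vert^{2-q}\}$. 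Your positivity check is also sound and is the right analogue of $1-f(p)>0$ in Proposition \ref{esquema}; indeed $\bar{f}(x) = \frac{\alpha+\beta}{2}\,\frac{2-x}{\alpha+\beta-x} = \frac{\alpha+\beta}{2}f(x)$, so your coefficient satisfies $\frac{1}{2} - \frac{\bar{f}(p)}{\alpha+\beta} = \frac{1-f(p)}{2} > 0$, matching the constant implicit in the paper's estimate.
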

	\begin{proof}
		The proof follows the same lines discussed in the proof of Proposition \ref{esquema}.	Firstly, by applying the estimates (\ref{2ky}), (\ref{Lambda_e de te}) together with (\ref{Q_e}) we obtain that  
		$$\Lambda_e(\bar{u}_k, \bar{v}_k) \ge C \min \left \{\Vert (\bar{u}_k, \bar{v}_k)\Vert^{2 - p},\Vert (\bar{u}_k, \bar{v}_k)\Vert^{2 - q}\right \}.$$
		Recall also that $\Lambda_e(\bar{u}_k, \bar{v}_k) \ge C_{\delta}> 0$ holds for some $C_{\delta} > 0$ and for any $k \in \mathbb{N}$. Hence we deduce that
		$$\lambda_*= \inf_{(z, w)\in \mathcal{A}}\Lambda_e(z, w) = \lim_{k \rightarrow +\infty} \Lambda_e(\bar{u}_k, \bar{v}_k) \ge \bar{C}_{\bar\delta} > 0.$$
		This completes the proof.
	\end{proof}
	
	\begin{prop}
		Suppose $(P)$, ($V_0$) and ($V_1$). Let $(\bar u, \bar v) \in \mathcal{A}$ be any minimizer for the functional $\Lambda_n$. Then $(\bar u, \bar v)$ is a critical point for the functional $\Lambda_n$. Furthermore, the function  satisfies $(z_1, z_2) = (t_n(\bar{u}, \bar{v})(\bar{u}, \bar{v}))$ weakly the following nonlocal elliptic problem:
		\begin{equation}\left\{\begin{array}{lll}\label{problema2} 
				2(-\Delta)^su + 2V_1(x)u =   \lambda^*p|u|^{p-2}u + \theta \alpha |u|^{\alpha - 2}u|v|^{\beta} \ \ em \ \ \mathbb{R^N}, \\
				2(-\Delta)^sv + 2V_2(x)v =   \lambda^*q|v|^{q-2}v + \theta \beta |u|^{\alpha}|v|^{\beta - 2}v \ \ em \ \ \mathbb{R^N},\\
				(u, v) \in X.
			\end{array}\right.
		\end{equation}
	\end{prop}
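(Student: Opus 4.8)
The plan is to split the statement into two parts: first, that $(\bar u,\bar v)$ is a critical point of $\Lambda_n$; second, that the rescaled pair $(z_1,z_2)=t_n(\bar u,\bar v)(\bar u,\bar v)$ solves the displayed system \eqref{problema2} weakly. For the first part I would simply invoke the first-order optimality condition: the set $\mathcal{A}$ defined in \eqref{A} is open in $X$, one has $\Lambda_n\in C^1(\mathcal{A},\mathbb{R})$, and $(\bar u,\bar v)\in\mathcal{A}$ realizes the infimum $\lambda^*=\inf_{(u,v)\in\mathcal{A}}\Lambda_n(u,v)$ (attained by Proposition \ref{lambda*ating}). Since this minimizer lies in the interior of the domain, $\Lambda_n'(\bar u,\bar v)=0$, so $(\bar u,\bar v)$ is a critical point of $\Lambda_n$.

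The decisive step is to transfer this critical point information to $R_n$ evaluated at the rescaled point. Because $\Lambda_n$ is $0$-homogeneous (Proposition \ref{lema_naru1}) and, as already recorded in the proof of Proposition \ref{Limitada}, $t_n(z_1,z_2)=1$, one gets $\Lambda_n(z_1,z_2)=\lambda^*$; hence $(z_1,z_2)\in\mathcal{A}$ is again a minimizer and $\Lambda_n'(z_1,z_2)=0$. Differentiating the identity $\Lambda_n(u,v)=R_n(t_n(u,v)(u,v))$ at $(z_1,z_2)$ by the chain rule gives, for every $(\varphi,\psi)\in X$,
\[
\Lambda_n'(z_1,z_2)(\varphi,\psi)=R_n'(z_1,z_2)(\varphi,\psi)+R_n'(z_1,z_2)(z_1,z_2)\,Dt_n(z_1,z_2)(\varphi,\psi).
\]
Now $R_n'(z_1,z_2)(z_1,z_2)=\frac{d}{dt}R_n(tz_1,tz_2)\big|_{t=1}=Q_n'(1)=0$, since $t=1$ is exactly the maximizer of $t\mapsto R_n(tz_1,tz_2)$; consequently the second term drops out and $R_n'(z_1,z_2)=0$ as an element of the dual of $X$.

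It then remains to unwind $R_n'(z_1,z_2)=0$ explicitly. Writing $R_n=I/J$ with $I(u,v)=\Vert(u,v)\Vert^2-\theta\int_{\mathbb{R}^N}|u|^\alpha|v|^\beta dx$ and $J(u,v)=\Vert u\Vert_p^p+\Vert v\Vert_q^q$, the quotient rule together with $J(z_1,z_2)>0$ (which holds since $(z_1,z_2)\in\mathcal{A}$ forces both components to be nontrivial) yields
\[
I'(z_1,z_2)(\varphi,\psi)=R_n(z_1,z_2)\,J'(z_1,z_2)(\varphi,\psi)=\lambda^*\,J'(z_1,z_2)(\varphi,\psi),
\]
where I used $R_n(z_1,z_2)=\Lambda_n(\bar u,\bar v)=\lambda^*$. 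Computing the Gateaux derivatives of $I$ and $J$ and writing $(u,v)=(z_1,z_2)$, this becomes
\begin{align*}
2\langle(u,v),(\varphi,\psi)\rangle &=\lambda^*p\int_{\mathbb{R}^N}|u|^{p-2}u\varphi\,dx+\lambda^*q\int_{\mathbb{R}^N}|v|^{q-2}v\psi\,dx\\
&\quad+\theta\alpha\int_{\mathbb{R}^N}|u|^{\alpha-2}u\varphi|v|^\beta\,dx+\theta\beta\int_{\mathbb{R}^N}|u|^\alpha|v|^{\beta-2}v\psi\,dx.
\end{align*}
Choosing $\psi=0$ recovers the weak formulation of the first equation in \eqref{problema2}, and $\varphi=0$ that of the second, which finishes the argument.

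The step I expect to be the main obstacle is the middle one: rigorously reducing the derivative of $\Lambda_n$ to that of $R_n$ at the optimal rescaling. This hinges on the $C^1$ dependence of $t_n$ furnished by the Implicit Function Theorem and on the vanishing $Q_n'(1)=0$ at the maximizer; the homogeneity bookkeeping (namely $t_n(z_1,z_2)=1$ and $\Lambda_n(z_1,z_2)=\lambda^*$) and the strict positivity of $J(z_1,z_2)$ needed to pass from $R_n'(z_1,z_2)=0$ to $I'(z_1,z_2)=\lambda^*J'(z_1,z_2)$ must also be checked carefully.
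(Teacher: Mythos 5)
Your proposal is correct and follows essentially the same route as the paper: criticality of $\Lambda_n$ at the interior minimizer, the chain rule for $\Lambda_n(u,v)=R_n(t_n(u,v)(u,v))$ combined with the vanishing radial derivative $Q_n'=0$ at the maximizer to conclude $R_n'(z_1,z_2)=0$, and then the quotient rule to extract the weak formulation of \eqref{problema2}. The only (cosmetic) difference is that you normalize first via $t_n(z_1,z_2)=1$ and differentiate at the rescaled point, whereas the paper applies the chain rule directly at $(\bar u,\bar v)$; the bookkeeping you flag as delicate is handled identically in both arguments.
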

	\begin{proof}
		Since $\Lambda_n$ is attained by the function $(\bar{u}, \bar{v})\in \mathcal{A}$ we mention that 
		$$\lambda^* = \inf_{(u,v)\in \mathcal{A}} \Lambda_n(u,v) = \Lambda_n(\bar{u}, \bar{v}) = R_n(t_n(\bar{u}, \bar{v})(\bar{u}, \bar{v})).$$
		Recall also that $t_n(\bar{u}, \bar{v})$ is the maximum point for the function $t \mapsto Q_n(t) := R_n(tu, tv)$. In particular, we obtain that 
		\begin{equation}\label{equiv de deriv Qn' e Rn'} 
			0 = Q_n'(t_n(\bar{u}, \bar{v})) = (R_n)'(t_n(\bar{u},\bar{v})(\bar{u}, \bar{v}))(\bar{u}, \bar{v}).
		\end{equation}
		On the other hand, by using the fact that $(\bar{u}, \bar{v})$ is a critical point of $\Lambda_n$, we infer that
		\begin{eqnarray*} \label{der seg prob} 
			0&=&  \left (\Lambda_n \right )'(\bar{u},\bar{v})(w_1, w_2) = \left (R_n(t_n(\bar{u}, \bar{v})(\bar{u}, \bar{v})) \right )'(w_1, w_2)   \\
			&=&\left (R_n \right )'(t_n(\bar{u}, \bar{v})(\bar{u}, \bar{v})) \left \{\left [t_n'(\bar{u}, \bar{v})(w_1, w_2)\right ](\bar{u}, \bar{v}) +t_n(\bar{u}, \bar{v})(w_1, w_2) \right \} \\ 
			&= & t_n'(\bar{u}, \bar{v})(w_1, w_2)\left (R_n \right )'(t_n(\bar{u}, \bar{v})(\bar{u}, \bar{v}))(\bar{u}, \bar{v}) + t_n(\bar{u}, \bar{v})\left (R_n \right )'(t_n(\bar{u}, \bar{v})(\bar{u}, \bar{v}))(w_1, w_2).
		\end{eqnarray*}
		It follows from \eqref{equiv de deriv Qn' e Rn'} that $\left (R_n \right )'(t_n(\bar{u}, \bar{v})(\bar{u}, \bar{v}))(w_1, w_2) = 0$ holds for each $(w_1, w_2) \in \mathcal{A}$. Define the auxiliary function $(z_1, z_2) := t_n(\bar{u}, \bar{v})(\bar{u}, \bar{v}) \in \mathcal{A}$. Notice also that $$\lambda^*=R_n(z_1, z_2) = \frac{\displaystyle \Vert(z_1, z_2) \Vert^2 - \theta\int _\mathbb{R^N} \vert z_1 \vert^{\alpha}\vert z_2\vert^{\beta}dx}{\Vert z_1 \Vert^{p}_{p} + \Vert z_2 \Vert ^{q}_{q}}.$$
		Therefore, we obtain that
		\begin{eqnarray*} 
			0 & = & 2<(z_1, z_2), (w_1, w_2)> - \theta \displaystyle \alpha \int _\mathbb{R^N} \vert z_1 \vert^{\alpha - 2}z_1 w_1\vert z_2\vert^{\beta}dx - \theta\beta \int _\mathbb{R^N} \vert z_1 \vert^{\alpha}\vert z_2\vert^{\beta - 2}z_2 w_2dx \\
			&-&  \displaystyle\lambda^*\left ( p \int _\mathbb{R^N} \vert z_1 \vert^{p - 2}z_1 w_1 +  q \int _\mathbb{R^N} \vert z_2 \vert^{q - 2}z_2 w_2 dx \right ).
		\end{eqnarray*}
		holds for any $(w_1, w_2) \in X$. The last assertion says that $(z_1, z_2) \in X$ is a weak solution for the Problem (\ref{problema2}). This finished the proof.
	\end{proof}

	\begin{rmk}\label{lasqueira}
		Assume that $0 < \lambda < \lambda^*$. Then $\Lambda_n(u, v) > \lambda$ holds for each $(u, v)\in \mathcal{A}$. In particular, the equation $Q_n(t) = \lambda$ has exactly two distinct critical points for each $\lambda \in (0, \lambda^*)$.
	\end{rmk}
	Now, by using Remark \ref{lasqueira}, we obtain the following result: 
	
	\begin{prop}\label{tn-,tn+}
		Suppose (P), $(V_0)$ and $(V_1)$. Then for each $\lambda \in (0, \lambda^*)$ and $(u, v) \in \mathcal{A}$ the fibering map $ t \mapsto \gamma_{_\lambda}(t) = E_{_\lambda}(tu,tv)$ has exactly two distinct critical points $0 < t_n^+(u, v) < t_n(u, v) < t_n^-(u, v)$. Furthermore, we consider the following statements:
		\begin{itemize}
			\item [i)] The number $t_n^+(u, v)$ is a local minimum point for the fibering map $\gamma_{_\lambda}$ which satisfies $t_n^+(u, v)(u, v) \in \mathcal{N}^+_{_\lambda}$. Furthermore, the number $t_n^-(u, v)$ is a local maximum for the fibering map $\gamma_{_\lambda}$ which verifies $t_n^-(u, v)(u, v) \in \mathcal{N}^-_{\lambda}$.
			
			\item [ii)] The functions $(u, v) \mapsto t_n^+(u, v)$ and $(u, v) \mapsto t_n^-(u, v)$ are in $C^1(\mathcal{A}, \mathbb{R})$.
		\end{itemize}
	\end{prop}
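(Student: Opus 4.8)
The plan is to analyze the fibering map $\gamma_{_\lambda}(t)=E_{_\lambda}(tu,tv)$ through the quotient $Q_n$ and transfer all information about its critical points and their nature from $Q_n$ to $\gamma_{_\lambda}$. First I would record the algebraic factorization, valid for $t>0$,
$$
t\,\gamma_{_\lambda}'(t)=E'_{_\lambda}(tu,tv)(tu,tv)=\big(Q_n(t)-\lambda\big)D(t),\qquad D(t)=t^{p}\Vert u\Vert_p^p+t^{q}\Vert v\Vert_q^q>0,
$$
which is obtained by grouping the terms of $E'_{_\lambda}(tu,tv)(tu,tv)$ and comparing with \eqref{Q_n}. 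Combined with Remark \ref{obs rel Rn eE'}, this shows that the positive critical points of $\gamma_{_\lambda}$ are precisely the solutions of $Q_n(t)=\lambda$, and that $\mathrm{sign}\,\gamma_{_\lambda}'(t)=\mathrm{sign}\big(Q_n(t)-\lambda\big)$.

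Next I would pin down the shape of $Q_n$. From the limits computed after \eqref{Q_e} one has $Q_n(0^{+})=0$ with $Q_n$ increasing near $0$, while $Q_n(t)\to-\infty$ as $t\to\infty$; since the Implicit Function Theorem yields a unique $t=t_n(u,v)$ with $Q_n'(t)=0$, this forces $Q_n'>0$ on $(0,t_n)$ and $Q_n'<0$ on $(t_n,\infty)$, so $t_n(u,v)$ is the unique global maximum of $Q_n$ with value $\Lambda_n(u,v)$. Because $\lambda<\lambda^{*}=\inf_{\mathcal{A}}\Lambda_n\le\Lambda_n(u,v)$, Remark \ref{lasqueira} provides exactly two solutions of $Q_n(t)=\lambda$, one in each monotone branch, giving $0<t_n^{+}(u,v)<t_n(u,v)<t_n^{-}(u,v)$ and hence exactly two positive critical points of $\gamma_{_\lambda}$. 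The sign rule above then shows $\gamma_{_\lambda}'<0$ on $(0,t_n^{+})$, $\gamma_{_\lambda}'>0$ on $(t_n^{+},t_n^{-})$, and $\gamma_{_\lambda}'<0$ on $(t_n^{-},\infty)$, so $t_n^{+}$ is a local minimum and $t_n^{-}$ a local maximum of $\gamma_{_\lambda}$.

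To obtain the strict membership in $\mathcal{N}^{\pm}_{\lambda}$ required by (i), I would differentiate the factorization at a critical point $t_0$ (so $Q_n(t_0)=\lambda$ and $\gamma_{_\lambda}'(t_0)=0$), which gives $t_0\,\gamma_{_\lambda}''(t_0)=Q_n'(t_0)D(t_0)$. Since $D(t_0)>0$, the sign of $\gamma_{_\lambda}''(t_0)$ equals that of $Q_n'(t_0)$, and because $t_n^{+}<t_n<t_n^{-}$ lie strictly inside the two monotone branches we get $Q_n'(t_n^{+})>0$ and $Q_n'(t_n^{-})<0$. Using the identity $E''_{_\lambda}(t_0u,t_0v)(t_0u,t_0v)^2=t_0^{2}\gamma_{_\lambda}''(t_0)$ together with $t_0>0$, we conclude $E''_{_\lambda}(t_0u,t_0v)(t_0u,t_0v)^2>0$ at $t_0=t_n^{+}(u,v)$ and $<0$ at $t_0=t_n^{-}(u,v)$, that is $t_n^{+}(u,v)(u,v)\in\mathcal{N}^{+}_{\lambda}$ and $t_n^{-}(u,v)(u,v)\in\mathcal{N}^{-}_{\lambda}$. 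Statement (ii) then follows from the Implicit Function Theorem applied to $F(t,u,v):=\gamma_{_\lambda}'(t)$, which is $C^{1}$ on $(0,\infty)\times\mathcal{A}$: at each of the two roots $\partial_t F=\gamma_{_\lambda}''(t_n^{\pm})\neq0$ by the previous step, so $(u,v)\mapsto t_n^{\pm}(u,v)$ is locally, and therefore globally, of class $C^{1}(\mathcal{A},\mathbb{R})$.

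The main obstacle is not the existence or the count of the critical points, which is essentially delivered by Remark \ref{lasqueira}, but the strictness needed to place them in $\mathcal{N}^{\pm}_{\lambda}$ rather than in the degenerate set $\mathcal{N}^{0}_{\lambda}$: this is exactly where the hypothesis $\lambda<\lambda^{*}$ is indispensable, since it guarantees $\lambda<\Lambda_n(u,v)$ for every $(u,v)\in\mathcal{A}$ and hence that both crossings are transversal, i.e. $t_n^{\pm}\neq t_n$ and $Q_n'(t_n^{\pm})\neq0$. The one technical point to verify carefully is the joint $C^{1}$ regularity of $F$ (equivalently of $Q_n$ and $D$) in $(t,u,v)$, which underlies both the uniqueness of $t_n$ and the final application of the Implicit Function Theorem.
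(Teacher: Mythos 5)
Your proof is correct and follows essentially the same route as the paper: both hinge on $\lambda<\lambda^*\le\Lambda_n(u,v)$ forcing exactly two transversal crossings of $Q_n(t)=\lambda$, the sign transfer between $Q_n'$ and $E''_{_\lambda}(tu,tv)(tu,tv)^2$, and the Implicit Function Theorem for the $C^1$ dependence of $t_n^{\pm}$. The only cosmetic difference is that you rederive the factorization $t_0\gamma_{_\lambda}''(t_0)=Q_n'(t_0)D(t_0)$ inline, whereas the paper invokes it as Proposition \ref{tR_n'G' = E''} (with $G(tu,tv)=D(t)$) together with Remark \ref{relação entre R_n' e E''}, and you apply the IFT to $\gamma_{_\lambda}'(t)=0$ rather than to $Q_n(t)-\lambda=0$, which is equivalent since the relevant $t$-derivatives have the same sign at the two roots.
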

	\begin{proof}
		Let $0 < \lambda < \lambda^*$ and $(u, v) \in \mathcal{A}$ be fixed. It is easy to see that $$\lambda < \lambda^* = \displaystyle\inf_{(z,w)\in \mathcal {A}}\Lambda_n(z, w) < \Lambda_n(u, v) = Q_n(t_n(u, v)) = R_n(t_n(u, v)(u, v)).$$ Recall also that $Q_n(t_n(u, v))= \max\limits_{t>0} Q_n(t)$. Therefore, $Q_n(t) = R_n(tu, tv) = \lambda$ admits exactly two roots which we denote by $t_n^+(u, v)$ and $t_n^-(u, v)$. It is not hard to see that $0 < t_n^+(u, v) < t_n(u, v) < t_n^-(u, v)$. Furthermore, we have that $t_n^+(u, v)$ and $t_n^-(u, v)$ are critical points for the fibering map $\gamma_{_\lambda}(t) = E_{_\lambda}(tu, tv)$, see Remark \ref{obs rel Rn eE'}. Notice also that $R_n(tu, tv)= \lambda$ if and only if $\gamma_{_\lambda}'(t) = E'_{_\lambda}(tu, tv)(tu, tv) = 0$. It is important to emphasize that $Q_n'(t_n^+(u, v)) > 0$ and $Q_n'(t_n^-(u, v)) < 0$. In view of Proposition \ref{relação entre R_n' e E''} we conclude that $E''_{_\lambda}(t_n^+(u, v)(u, v))(t_n^+(u, v)(u, v))^2 > 0$ and $E''_{_\lambda}(t_n^-(u, v)(u, v))(t_n^+(u, v)(u, v))^2 < 0$. Hence, we obtain that $t_n^+(u, v)(u, v) \in \mathcal{N}^+_\lambda$ and $t_n^-(u, v)(u, v) \in \mathcal{N}^-_\lambda$. This finishes the proof of item $(i)$.
		
		Now we shall prove the item $(ii)$. Firstly, we observe that $\lambda \in (0, \lambda^*)$. Hence, the equation $Q_n(t_n(u, v)) = R_n(t_n(u, v)(u, v))$ admits exactly two roots $(u, v) \in \mathcal{A}$ for each $(u, v) \in \mathcal{A}$. Namely, we obtain $t_n^+(u, v)$ and $t_n^-(u, v)$ such that $0 < t_n^+(u, v) < t_n(u, v) < t_n^-(u, v)$ where $t_n^+(u, v)(u, v) \in \mathcal{N}^+_\lambda$ and $t_n^-(u, v)(u, v) \in \mathcal{N}^-_\lambda$. Furthermore, for $\lambda \in (0, \lambda^*)$, we infer that $\mathcal{N}^0_{\lambda} = \emptyset$ and  $\mathcal{N}_{\lambda} = \mathcal{N}^+_\lambda \cup \mathcal{N}^-_{\lambda}$. 
		On the other hand, using the fact that $Q_n \in C^1(\mathbb{R}^+, \mathbb{R})$, we deduce that $Q_n'(t_n^+(u, v)) > 0$ and $Q_n'(t_n^-(u, v)) < 0$. Therefore, the desired result follows by using the Implicit Function Theorem, see \cite{DRABEK}. Under these conditions, we infer that the functional $(u, v) \mapsto t_n^+(u, v)$ belongs to $C^1(\mathcal{A}, \mathbb{R})$ for any $\lambda \in (0, \lambda^*)$. Similarly, the functional $(u, v) \mapsto t_n^-(u, v)$ belongs to $C^1(\mathcal{A}, \mathbb{R})$ for any $\lambda \in (0, \lambda^*)$. This finishes the proof. 
	\end{proof}
	\begin{prop}\label{tR_n'G' = E''} 
		Suppose $(P)$, ($V_0$) and ($V_1$). Assume also that $(u, v)\in \mathcal{A}$ satisfies $\lambda = R_n(tu,tv)$ for some $t > 0$. Consider the auxiliary functional $G: \mathcal{A}\rightarrow \mathbb{R}$ given by $G(u,v) = \Vert u \Vert ^{p}_{p} +\Vert v \Vert ^{q}_{q}$. Then, we obtain that $$\frac{d}{dt}R_n(tu, tv) = \frac{1}{t}\frac{E''_{_\lambda}(tu, tv)(tu, tv)^2}{G(tu, tv)}.$$ 
	\end{prop}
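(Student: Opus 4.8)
The plan is to exploit the homogeneous structure of the ray $t \mapsto (tu,tv)$ and reduce the whole statement to a single differentiation of the relation $\lambda = R_n(tu,tv)$, carried out in two ways. First I would record the purely algebraic link between $R_n$ and the radial derivative of the energy. Evaluating \eqref{der ener} at $(tu,tv)$ and comparing with the definition \eqref{R_n}, one sees that the numerator of $R_n(tu,tv)$ is exactly $\Vert(tu,tv)\Vert^2 - \theta\int_{\mathbb{R}^N}|tu|^\alpha|tv|^\beta dx$, while the term $\lambda\,(\Vert tu\Vert_p^p + \Vert tv\Vert_q^q)$ furnishes the concave part of $E'_{_\lambda}(tu,tv)(tu,tv)$. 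This yields the factorization
\begin{equation*}
E'_{_\lambda}(tu,tv)(tu,tv) = G(tu,tv)\big(R_n(tu,tv) - \lambda\big), \qquad G(tu,tv) = \Vert tu\Vert_p^p + \Vert tv\Vert_q^q > 0,
\end{equation*}
valid for every $(u,v)\in\mathcal{A}$ and $t>0$.

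Next I would differentiate the scalar function $h(t) := E'_{_\lambda}(tu,tv)(tu,tv)$ with respect to $t$ in two independent ways. Writing $w=(tu,tv)$ with $\dot w = (u,v)$ and using the bilinearity of $E''_{_\lambda}$ together with the homogeneity in the direction variable, the product/chain rule gives the scaling identity
\begin{equation*}
h'(t) = E''_{_\lambda}(w)(\dot w, w) + E'_{_\lambda}(w)(\dot w) = \frac{1}{t}E''_{_\lambda}(tu,tv)(tu,tv)^2 + \frac{1}{t}E'_{_\lambda}(tu,tv)(tu,tv),
\end{equation*}
where I used $E''_{_\lambda}(w)((u,v),(tu,tv)) = t\,E''_{_\lambda}(w)(u,v)^2 = \tfrac{1}{t}E''_{_\lambda}(tu,tv)(tu,tv)^2$ and $E'_{_\lambda}(w)(u,v) = \tfrac{1}{t}E'_{_\lambda}(tu,tv)(tu,tv)$. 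On the other hand, differentiating the factorization from the first step directly produces
\begin{equation*}
h'(t) = \frac{d}{dt}G(tu,tv)\,\big(R_n(tu,tv)-\lambda\big) + G(tu,tv)\,\frac{d}{dt}R_n(tu,tv).
\end{equation*}

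Finally I would impose the hypothesis $\lambda = R_n(tu,tv)$ at the prescribed value of $t$. By Remark \ref{obs rel Rn eE'} this is precisely $E'_{_\lambda}(tu,tv)(tu,tv)=0$, i.e. $h(t)=0$; hence the term $\tfrac{1}{t}E'_{_\lambda}(tu,tv)(tu,tv)$ in the first expression for $h'(t)$ and the term $\frac{d}{dt}G(tu,tv)\,(R_n(tu,tv)-\lambda)$ in the second expression vanish simultaneously (both being governed by the single constraint $R_n(tu,tv)=\lambda$, consistent with $h=G(R_n-\lambda)$ and $G>0$). Equating the two surviving expressions gives $\tfrac{1}{t}E''_{_\lambda}(tu,tv)(tu,tv)^2 = G(tu,tv)\,\frac{d}{dt}R_n(tu,tv)$, and dividing by $G(tu,tv)>0$ yields the claimed formula. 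The one delicate point is the bookkeeping in the scaling identity of the second step: one must track which slot of the bilinear form $E''_{_\lambda}$ carries the extra factor $t$ and apply $E''_{_\lambda}(w)(tu,tv)^2 = t^2 E''_{_\lambda}(w)(u,v)^2$ correctly, lest a spurious power of $t$ survive. As a cross-check (and an alternative route), I would instead write $R_n(tu,tv)=Q_n(t)$ as the explicit quotient \eqref{Q_n}, apply the quotient rule, and then substitute $\lambda\,G(tu,tv)$ for the numerator using $\lambda=Q_n(t)$; the constraint is exactly what cancels the surplus degree-$(\alpha+\beta)$ contribution and leaves $\tfrac1t E''_{_\lambda}(tu,tv)(tu,tv)^2$, so the two derivations agree.
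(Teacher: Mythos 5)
Your proof is correct, and at its core it performs the same computation as the paper's: both differentiate the relation $G(tu,tv)\,R_n(tu,tv)=\Vert (tu,tv)\Vert^2-\theta\int_{\mathbb{R}^N}\vert tu\vert^\alpha\vert tv\vert^\beta\,dx$ by the product rule and then impose $R_n(tu,tv)=\lambda$. The packaging, however, is genuinely different, and in one respect yours is safer. The paper substitutes the constraint, uses $t\frac{d}{dt}G(tu,tv)=p\Vert tu\Vert_p^p+q\Vert tv\Vert_q^q$, and then identifies the surviving expression $2\Vert(tu,tv)\Vert^2-\theta(\alpha+\beta)\int_{\mathbb{R}^N}\vert tu\vert^\alpha\vert tv\vert^\beta\,dx-\lambda\bigl(p\Vert tu\Vert_p^p+q\Vert tv\Vert_q^q\bigr)$ with $E''_{_\lambda}(tu,tv)(tu,tv)^2$ by explicit term matching; note that this matching is valid only on the constraint set, because the genuine ray second derivative carries the coefficient $\theta(\alpha+\beta-1)$ rather than the $\theta(\alpha+\beta)$ printed in \eqref{der seg ener}, and the discrepancy $\theta\int_{\mathbb{R}^N}\vert tu\vert^\alpha\vert tv\vert^\beta\,dx$ is exactly absorbed by $E'_{_\lambda}(tu,tv)(tu,tv)=0$. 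Your factorization $h(t):=E'_{_\lambda}(tu,tv)(tu,tv)=G(tu,tv)\bigl(R_n(tu,tv)-\lambda\bigr)$, combined with the ray identity $h'(t)=\frac{1}{t}\bigl[E''_{_\lambda}(tu,tv)(tu,tv)^2+E'_{_\lambda}(tu,tv)(tu,tv)\bigr]$, brings $E''_{_\lambda}$ in abstractly, never touches that coefficient bookkeeping (the very point you flag as delicate), and makes fully transparent that the hypothesis $R_n(tu,tv)=\lambda$ is used only to annihilate, at the given $t$, the two terms $\frac{1}{t}E'_{_\lambda}(tu,tv)(tu,tv)$ and $\frac{d}{dt}G(tu,tv)\cdot\bigl(R_n(tu,tv)-\lambda\bigr)$; the same factorization scheme also transfers to the companion statement, Proposition \ref{tR_e'G = E'}, via $E_{_\lambda}(tu,tv)=\bigl(\frac{1}{p}\Vert tu\Vert_p^p+\frac{1}{q}\Vert tv\Vert_q^q\bigr)\bigl(R_e(tu,tv)-\lambda\bigr)$. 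One small caveat: since $1\le p\le q<2$, the second Fr\'echet derivative of $E_{_\lambda}$ need not exist as a bilinear form on all of $X$, so your chain-rule step in terms of $E''_{_\lambda}(w)(\dot w,w)$ should be read along the ray, i.e. with $h(t)=t\,\gamma_{_\lambda}'(t)$ and $E''_{_\lambda}(tu,tv)(tu,tv)^2:=t^2\gamma_{_\lambda}''(t)$, which is precisely the convention the paper uses; with that reading, every step of your argument is one-dimensional calculus and goes through verbatim.
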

	\begin{proof}
		Firstly, we mention that
		\begin{equation}\label{der função auxiliar} 
			t\frac{\partial}{\partial t}G(tu, tv) = p\Vert tu \Vert^{p}_{p} + q\Vert tv \Vert^{q}_{q}. 
		\end{equation}
		It is not hard to see that 
		$$G(tu, tv)R_n(tu, tv) = t^2\Vert (u, v) \Vert^2 - \theta t^{\alpha + \beta}\int\limits_{\mathbb{R}^N} \vert u\vert^\alpha\vert v\vert^\beta dx.$$
		Therefore, 
		\begin{equation}\label{derivaçao de eq} 
			\frac{d}{dt}G(tu, tv)R_n(tu, tv) + G(tu, tv) \frac{d}{dt}R_n(tu, tv)= 2t\Vert (u, v) \Vert^2 -\theta(\alpha + \beta)t^{\alpha +\beta -1}\int\limits_{\mathbb{R}^N} \vert u\vert^\alpha\vert v\vert^\beta dx.
		\end{equation}
		In view of hypothesis $R_n(tu, tv) = \lambda$ and (\ref{derivaçao de eq}) we infer that 
		$$t\frac{d}{dt}G(tu, tv)\lambda + tG(tu, tv) \frac{d}{dt}R_n(tu, tv)= 2\Vert (tu, tv) \Vert^2 -\theta(\alpha + \beta)\int\limits_{\mathbb{R}^N} \vert tu\vert^\alpha\vert tv\vert^\beta dx.$$
		Now, taking into account (\ref{der função auxiliar}), we deduce that
		\begin{eqnarray*} \label{der seg prob} 
			tG(tu, tv)\frac{d}{dt} R_n(tu, tv) & = & 2 \Vert(tu, tv) \Vert^2 - \theta (\alpha + \beta)\int\limits_{\mathbb{R}^N} \vert tu\vert^\alpha\vert tv\vert^\beta dx - \lambda (p \Vert tu\Vert_p^p + q\Vert tv \Vert_q^q)  =  E''_{_\lambda}(tu, tv)(tu, tv)^2.
		\end{eqnarray*}
		Under these conditions, we were able to show that
		\begin{equation}\label{sinal Rn' e E''} 
			\frac{d}{dt} R_n(tu, tv) = \frac{E''_{_\lambda}(tu, tv)(tu, tv)^2 }{tG(tu, tv)}. 
		\end{equation}
		This ends the proof.
	\end{proof}
	Now, by using Proposition \eqref{tR_n'G' = E''},  we obtain the following result:
	\begin{rmk} \label{relação entre R_n' e E''}
		Suppose $(P)$, ($V_0$) and ($V_1$). Assume also that $(u, v)\in \mathcal{A}$ satisfies $\lambda = R_n(tu,tv)$ for some $t > 0$. Then we obtain that 
		$\frac{d}{dt}R_n(tu, tv)>0$ if, and only if, $E''_{_\lambda}(tu, tv)(tu, tv)^2>0$. Furthermore, $\frac{d}{dt}R_n(tu, tv)<0$ if, and only, if $E''_{_\lambda}(tu, tv)(tu, tv)^2<0$. In the same way, $\frac{d}{dt}R_n(tu, tv) = 0$ if, and only if, $E''_{_\lambda}(tu, tv)(tu, tv)^2 = 0$.
	\end{rmk}
	
	\begin{prop}\label{tR_e'G = E'} 
		Suppose $(P)$, ($V_0$) and ($V_1$). Assume also that $(u, v)\in \mathcal{A}$ satisfies $\lambda = R_e(tu,tv)$ for some $t > 0$. Define the auxiliary functional $G: \mathcal{A}\rightarrow \mathbb{R}$ given by $G(u,v) = \Vert u \Vert ^{p}_{p} +\Vert v \Vert ^{q}_{q}$. Then $$\frac{d}{dt}R_e(tu, tv) = \frac{1}{t}\frac{E'_{_\lambda}(tu, tv)(tu, tv)}{G(tu, tv)}.$$ 
	\end{prop}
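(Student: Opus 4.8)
The plan is to run the computation of Proposition \ref{tR_n'G' = E''} one differentiation order lower, with $R_e$ in the role of $R_n$ and $E'_{_\lambda}(tu,tv)(tu,tv)$ in the role of $E''_{_\lambda}(tu,tv)(tu,tv)^2$. In that scheme $G$ is the denominator of the quotient being differentiated; for $R_e$ this denominator is $G(tu,tv)=\frac{1}{p}\Vert tu\Vert_p^p+\frac{1}{q}\Vert tv\Vert_q^q$ from \eqref{R_e1}, and clearing it produces the polynomial identity
\begin{equation*}
	G(tu,tv)\,R_e(tu,tv)=\frac{1}{2}t^2\Vert(u,v)\Vert^2-\frac{\theta}{\alpha+\beta}t^{\alpha+\beta}\int_{\mathbb{R}^N}\vert u\vert^\alpha\vert v\vert^\beta\,dx .
\end{equation*}
First I would differentiate this in $t$ by the product rule and multiply by $t$, so the left-hand side reads $t\frac{d}{dt}[G(tu,tv)]\,R_e(tu,tv)+tG(tu,tv)\frac{d}{dt}R_e(tu,tv)$.

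Next I would insert the hypothesis $R_e(tu,tv)=\lambda$ into the first summand and use the scaling identity
\begin{equation*}
	t\frac{d}{dt}G(tu,tv)=\Vert tu\Vert_p^p+\Vert tv\Vert_q^q ,
\end{equation*}
the first-order counterpart of the identity exploited for $R_n$, which reduces that summand to $\lambda\bigl(\Vert tu\Vert_p^p+\Vert tv\Vert_q^q\bigr)$. After multiplication by $t$, the right-hand side of the differentiated identity equals $t^2\Vert(u,v)\Vert^2-\theta t^{\alpha+\beta}\int_{\mathbb{R}^N}\vert u\vert^\alpha\vert v\vert^\beta\,dx$; transposing the $\lambda$-term and comparing with \eqref{der ener}, I recognize the right-hand side as precisely $E'_{_\lambda}(tu,tv)(tu,tv)$. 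This gives $tG(tu,tv)\frac{d}{dt}R_e(tu,tv)=E'_{_\lambda}(tu,tv)(tu,tv)$, and dividing by $tG(tu,tv)$ yields the claim.

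The step I expect to be delicate is keeping apart the two $p,q$-sums that enter. The $G$ that divides $E'_{_\lambda}$ is the \emph{weighted} denominator $\frac{1}{p}\Vert\cdot\Vert_p^p+\frac{1}{q}\Vert\cdot\Vert_q^q$ of $R_e$, whereas its logarithmic derivative $t\frac{d}{dt}G(tu,tv)=\Vert tu\Vert_p^p+\Vert tv\Vert_q^q$ is the \emph{unweighted} sum; it is this unweighted combination that fuses with the quadratic and coupling terms to rebuild the concave part $-\lambda(\Vert tu\Vert_p^p+\Vert tv\Vert_q^q)$ of $E'_{_\lambda}$ in \eqref{der ener}. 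In Proposition \ref{tR_n'G' = E''} the two sums coincide, since the denominator of $R_n$ is itself unweighted, which is why the same letter $G$ serves both statements; for $R_e$ one simply records that the weighted denominator divides $E'_{_\lambda}$ while its unweighted derivative supplies the concave terms. Once the hypothesis $\lambda=R_e(tu,tv)$ is used to discard the undifferentiated factor $R_e(tu,tv)$, every remaining manipulation is the verbatim first-order shadow of the $R_n$ computation and needs nothing beyond the $C^1$-smoothness of $R_e$ on $\mathcal{A}$.
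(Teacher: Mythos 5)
Your computation is correct and is essentially the paper's own argument: the paper's proof of Proposition \ref{tR_e'G = E'} expands $t\frac{d}{dt}R_e(tu,tv)$ by the quotient rule and substitutes $R_e(tu,tv)=\lambda$, while you run the equivalent product-rule computation on the cleared identity $G(tu,tv)R_e(tu,tv)=\frac{1}{2}t^2\Vert(u,v)\Vert^2-\frac{\theta}{\alpha+\beta}t^{\alpha+\beta}\int_{\mathbb{R}^N}\vert u\vert^\alpha\vert v\vert^\beta\,dx$, exactly mirroring how the paper proves the companion Proposition \ref{tR_n'G' = E''}; the two organizations are interchangeable. One point in your favor deserves emphasis: the proposition as printed defines $G(u,v)=\Vert u\Vert_p^p+\Vert v\Vert_q^q$, but, as you correctly observe, the identity only holds with the \emph{weighted} denominator $\frac{1}{p}\Vert tu\Vert_p^p+\frac{1}{q}\Vert tv\Vert_q^q$ of $R_e$ (for $1<p\le q<2$ the two sums differ), and the paper's own displayed proof tacitly uses the weighted denominator squared in both fractions, so the unweighted $G$ in the statement is a slip that your proof silently corrects. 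This discrepancy is harmless downstream, since the only use of the proposition is the sign equivalence of Remark \ref{signal R_e' = signal E'}, and both denominators are positive; your reconstruction of $E'_{_\lambda}(tu,tv)(tu,tv)$ via \eqref{der ener} after transposing the $\lambda$-term is exactly right.
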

	\begin{proof}
		Using (\ref{der ener}) together with the identity $R_e(tu, tv) = \lambda$ we obtain the following identities
		\begin{eqnarray*}
			\frac{E'_{_{\lambda}}(tu, tv)(tu, tv)}{G(tu, tv)} &=&\frac{\left ( t^2\Vert (u, v)\Vert^2 - \theta t^{\alpha + \beta} \int\limits_{\mathbb{R}^N} \vert u\vert^\alpha\vert v\vert^\beta dx  \right  )\left ( \frac{t^{p}}{p}\Vert u \Vert^{p}_{p} + \frac{t^{q}}{q}\Vert v \Vert^{q}_{q}  \right  ) }{\left ( \frac{t^{p}}{p}\Vert u \Vert _{p}^{p} + \frac{t^{q}}{q}\Vert v \Vert_{q}^{q} \right )^2}\\ 
			&-&\frac{\left ( \frac{1}{2}t^2\Vert(u, v) \Vert^2 - \frac{\theta}{\alpha + \beta}t^{\alpha + \beta} \int\limits_{\mathbb{R}^N} \vert u\vert^\alpha\vert v\vert^\beta dx  \right )\left ( t^{p}\Vert u \Vert ^{p}_{p} + t^{q}\Vert v \Vert ^{q}_{q}  \right )}{\left ( \frac{t^{p}}{p}\Vert u \Vert_{p}^{p} + \frac{t^{q}}{q}\Vert v \Vert_{q}^{q} \right )^2}\\ 
			& = & t\frac{d}{dt}R_e(tu, tv).
		\end{eqnarray*}
		This ends the proof.
	\end{proof}
	In particular, using Proposition \ref{tR_e'G = E'}, we obtain the following result:
	\begin{rmk}\label{signal R_e' = signal E'} 
		Suppose $(P)$, ($V_0$) and ($V_1$). Assume also that $(u, v)\in \mathcal{A}$ satisfies $\lambda = R_e(tu,tv)$ for some $t > 0$. Then we obtain that $\frac{d}{dt}R_e(tu, tv)>0$ if and only if $E'_{_\lambda}(tu, tv)(tu, tv)>0$. Furthermore, $\frac{d}{dt}R_e(tu, tv)<0$ if and only if $E'_{_\lambda}(tu, tv)(tu, tv)<0$. In the same manner, $\frac{d}{dt}R_e(tu, tv) = 0$ if and only if $E'_{_\lambda}(tu,  tv)(tu, tv) = 0$.
	\end{rmk}
	\begin{prop}
		Suppose $(P)$, ($V_0$) and ($V_1$). Then $\Lambda_n(u, v) > \Lambda_e(u, v)$ holds for each $(u, v) \in \mathcal{A}$. As a consequence, we obtain that $0 < \lambda_* <\lambda^* < + \infty$.
	\end{prop}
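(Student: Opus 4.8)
The plan is to establish the pointwise strict inequality $\Lambda_n(u,v) > \Lambda_e(u,v)$ on all of $\mathcal{A}$ and then read off the chain $0 < \lambda_* < \lambda^* < +\infty$ from the attainment results already at hand. Fix $(u,v)\in\mathcal{A}$ and write $t_n=t_n(u,v)$, $t_e=t_e(u,v)$ for the unique critical (maximum) points of $Q_n$ and $Q_e$, so $\Lambda_n(u,v)=Q_n(t_n)$ and $\Lambda_e(u,v)=Q_e(t_e)$. The non-strict inequality is immediate: since $Q_e'(t_e)=0$, Remark \ref{lema_naru}(iii) gives $Q_n(t_e)=Q_e(t_e)$, and because $t_n$ is the global maximum of $Q_n$ we obtain
\[
\Lambda_n(u,v)=Q_n(t_n)\ge Q_n(t_e)=Q_e(t_e)=\Lambda_e(u,v).
\]

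It remains to upgrade this to a strict inequality, which is equivalent to showing $t_n\neq t_e$ (since $t_n$ is the \emph{unique} maximizer of $Q_n$, the inequality $Q_n(t_n)\ge Q_n(t_e)$ is strict as soon as $t_e\neq t_n$). First I would prove $t_n\le t_e$. Arguing by contradiction, suppose $t_n>t_e$ and pick $t\in(t_e,t_n)$. On $(0,t_n)$ the map $Q_n$ is strictly increasing and on $(t_e,\infty)$ the map $Q_e$ is strictly decreasing (each has a single critical point and tends to $0^+$ at the origin and to $-\infty$ at infinity); hence, using $Q_n(t_e)=Q_e(t_e)$, we get $Q_n(t)>Q_n(t_e)=Q_e(t_e)>Q_e(t)$, contradicting Remark \ref{lema_naru}(ii) which forces $Q_n(t)<Q_e(t)$ for $t>t_e$. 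Therefore $t_n\le t_e$.

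The main obstacle is to exclude the degenerate case $t_n=t_e$. Here I would use the explicit fibering critical-point equations. Writing $a=t^p\Vert u\Vert_p^p$, $b=t^q\Vert v\Vert_q^q$, $K=\theta t^{\alpha+\beta-2}\int_{\mathbb{R}^N}|u|^\alpha|v|^\beta\,dx$, and $N=\Vert(u,v)\Vert^2$, the conditions $Q_n'(t)=0$ and $Q_e'(t)=0$ read, respectively,
\[
N[(2-p)a+(2-q)b]=K[(\alpha+\beta-p)a+(\alpha+\beta-q)b]
\]
and
\[
N\Big[\big(\tfrac1p-\tfrac12\big)a+\big(\tfrac1q-\tfrac12\big)b\Big]=K\Big[\big(\tfrac1p-\tfrac1{\alpha+\beta}\big)a+\big(\tfrac1q-\tfrac1{\alpha+\beta}\big)b\Big].
\]
If $t_n=t_e$ both hold at the same $t$, and eliminating the ratio $N/K$ yields a homogeneous quadratic identity in $(a,b)$. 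A direct expansion shows this form has strictly positive coefficients precisely because $\alpha+\beta>2$ and $1\le p\le q<2$: the $a^2$ and $b^2$ coefficients carry the factor $(\alpha+\beta-2)$, while the $ab$-coefficient reduces (up to a positive factor) to the positivity at $\sigma=\alpha+\beta$ of $\Phi(\sigma)=2(S-\Pi)\sigma^2+S(\Pi-4)\sigma+2\Pi(4-S)$, where $S=p+q$ and $\Pi=pq$. Since $\Phi(2)=0$ and its second root lies below $2$ — equivalently $h(p)+h(q)>0$ with $h(t)=4/t+t-4>0$ on $[1,2)$ — one has $\Phi(\sigma)>0$ for $\sigma>2$. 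Thus the quadratic form cannot vanish for $(a,b)\neq(0,0)$, a contradiction, so $t_n<t_e$ and $\Lambda_n(u,v)>\Lambda_e(u,v)$. (When $p=q$ this step is cleaner: the explicit formulas of Remarks \ref{tn p = q} and \ref{te p = q} reduce the claim to the comparison $C_{\alpha,\beta,q,\theta}>\tilde C_{\alpha,\beta,q,\theta}$ proved earlier.)

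Finally I would deduce the stated chain. Because $\Lambda_n$ is finite on $\mathcal{A}$, any fixed $(u_0,v_0)\in\mathcal{A}$ gives $\lambda^*=\inf_{\mathcal{A}}\Lambda_n\le\Lambda_n(u_0,v_0)<+\infty$, while $\lambda_*>0$ is Proposition \ref{esquema2}. For the strict separation, Proposition \ref{lambda*ating} furnishes a minimizer $(\bar u,\bar v)\in\mathcal{A}$ with $\lambda^*=\Lambda_n(\bar u,\bar v)$; applying the pointwise strict inequality at this point yields
\[
\lambda^*=\Lambda_n(\bar u,\bar v)>\Lambda_e(\bar u,\bar v)\ge\inf_{\mathcal{A}}\Lambda_e=\lambda_*,
\]
so that $0<\lambda_*<\lambda^*<+\infty$, as claimed. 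I expect the positive-definiteness of the quadratic form (the exclusion of $t_n=t_e$) to be the genuinely technical point; everything else is a bookkeeping of the maximum structure of $Q_n,Q_e$ together with the already-established attainment.
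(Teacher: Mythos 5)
Your proof is correct, and at the top level it follows the same route as the paper: both use the identity of Proposition \ref{lema_naruprop} (through Remark \ref{lema_naru}) to get $Q_n(t_e(u,v))=Q_e(t_e(u,v))$, the strict global maximality of $Q_n$ at $t_n(u,v)$ (Lemma \ref{apendice Maxwell}) to conclude the pointwise inequality, and then attainment of $\lambda^*$ (Proposition \ref{lambda*ating}) together with Proposition \ref{esquema2} to read off $0<\lambda_*<\lambda^*<+\infty$. Where you genuinely depart from the paper is the degenerate case $t_n(u,v)=t_e(u,v)$: the paper's chain $\Lambda_n-\Lambda_e=Q_n(t_n)-Q_e(t_e)>Q_n(t_e)-Q_e(t_e)=0$ is strict only if $t_n\neq t_e$, and the paper never verifies this (the cited fact that $Q_e'$ vanishes only at $t_e$ shows $Q_n(t_e)=Q_e(t_e)$, not $t_n\neq t_e$), so your extra step fills a real gap rather than duplicating an existing argument. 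I checked your elimination: writing $\sigma=\alpha+\beta$, the difference of the cross-multiplied products is a homogeneous quadratic in $(a,b)$ with $a^2$-coefficient $(\sigma-p)(2-p)(\sigma-2)/(2p\sigma)$, $b^2$-coefficient $(\sigma-q)(2-q)(\sigma-2)/(2q\sigma)$, and $ab$-coefficient $\Phi(\sigma)/(2pq\sigma)$ with exactly your $\Phi$; since $\Phi(2)=0$, the leading coefficient $2(S-\Pi)=2\bigl(1-(p-1)(q-1)\bigr)$ is positive, and the second root lies below $2$ precisely because $h(p)+h(q)>0$ on $[1,2)$, all three coefficients are strictly positive, and $a,b>0$ on $\mathcal{A}$ gives the contradiction, hence $t_n<t_e$. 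For what it buys: your version is more complete than the paper's on the one delicate point. A slightly quicker route to the same exclusion is linear rather than quadratic: if $t_n=t_e=\tau$ and $\lambda=Q_n(\tau)=Q_e(\tau)$, then the fibering map $\gamma(t)=E_\lambda(t\tau u,t\tau v)$ satisfies $\gamma(1)=\gamma'(1)=\gamma''(1)=0$, and eliminating $\Vert(\tau u,\tau v)\Vert^2$ and the coupling integral from these three relations leaves
\[
\lambda\,\frac{(2-p)(\sigma-p)}{p}\,\Vert \tau u\Vert_p^p+\lambda\,\frac{(2-q)(\sigma-q)}{q}\,\Vert \tau v\Vert_q^q=0,
\]
which is impossible since every factor is positive; but your quadratic-form argument is equally valid.
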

	\begin{proof}
		Firstly, we observe that  $Q_n(t_n(u, v)) > Q_n(t)$ holds for each $t \neq t_n(u,v)$. In light of Proposition \ref{lema_naruprop} we obtain that
		$$\Lambda_n(u, v) - \Lambda_e(u, v) = Q_n(t_n(u, v)) - Q_e(t_e(u, v)) > Q_n(t_e(u, v)) - Q_e(t_e(u, v)) = 0$$
		Recall also that $\frac{dQ_e(t)}{dt} = 0$ only for  $t = t_e(u, v)$, see Remark \ref{signal R_e' = signal E'}. In particular, we obtain that
		$\Lambda_e(u, v) < \Lambda_n(u, v)$ holds for each $(u,v) \in \mathcal{A}$. Using the last assertion we infer that
		$$\lambda^* = \inf_{(u, v)\in \mathcal{A}} \Lambda_n(u,v) = \Lambda_n(u^*, v^*)> \Lambda_e(u^*, v^*) \ge \inf_{(z, w)\in \mathcal{A}}\Lambda_e(z, w)  = \lambda_*.$$
		Here was used the fact that $\lambda^*$ is attained by the function $(u^*, v^*) \in \mathcal{A}$. Hence, $0 < \lambda_* < \lambda^* < \infty$. The finishes the proof.
	\end{proof}
	\begin{prop}
		Suppose $(P)$, ($V_0$), ($V_1$) and $\lambda \in ( 0, \lambda^*)$. Then $\mathcal{N}^0_{\lambda} = \emptyset$ holds.
	\end{prop}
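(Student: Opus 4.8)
The plan is to argue by contradiction, in the same spirit as Remark \ref{N- cont A}. Suppose that for some $\lambda \in (0,\lambda^*)$ there exists a pair $(u,v) \in \mathcal{N}^0_\lambda$, so that $(u,v) \neq (0,0)$ satisfies simultaneously $E'_{_\lambda}(u,v)(u,v) = 0$ and $E''_{_\lambda}(u,v)(u,v)^2 = 0$. The first point to settle is that any such pair must lie in $\mathcal{A}$. Indeed, if instead $\int_{\mathbb{R}^N}|u|^\alpha|v|^\beta\,dx = 0$, then combining the Nehari identity $\Vert(u,v)\Vert^2 = \lambda(\Vert u\Vert_p^p + \Vert v\Vert_q^q)$ coming from \eqref{Nehari} with the vanishing of \eqref{der seg ener} yields $\lambda(2-p)\Vert u\Vert_p^p + \lambda(2-q)\Vert v\Vert_q^q = 0$. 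Since $1 \le p \le q < 2$, both summands are nonnegative, which forces $\Vert u\Vert_p^p = \Vert v\Vert_q^q = 0$ and hence $(u,v)=(0,0)$, a contradiction. Therefore $\mathcal{N}^0_\lambda \subset \mathcal{A}$, and we may assume $(u,v)\in\mathcal{A}$ from now on.

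Once $(u,v) \in \mathcal{A}$, the identity $E'_{_\lambda}(u,v)(u,v) = 0$ together with Remark \ref{obs rel Rn eE'} (taken at $t=1$) gives $R_n(u,v) = \lambda$. Next I would invoke Proposition \ref{tR_n'G' = E''} evaluated at $t=1$: since
$$\left.\frac{d}{dt}R_n(tu,tv)\right|_{t=1} = \frac{E''_{_\lambda}(u,v)(u,v)^2}{G(u,v)},$$
the numerator vanishes while $G(u,v) = \Vert u\Vert_p^p + \Vert v\Vert_q^q > 0$ for $(u,v)\in\mathcal{A}$, and it follows that $t=1$ is a critical point of the fibering map $t \mapsto Q_n(t) = R_n(tu,tv)$. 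But $Q_n$ admits exactly one positive critical point $t_n(u,v)$ (the unique solution of $Q_n'(t)=0$ furnished by the Implicit Function Theorem and Lemma \ref{apendice Maxwell}), which is its global maximizer in view of the limiting behavior of $Q_n$ at $0$ and at $\infty$ recorded earlier. Hence $t_n(u,v) = 1$, and consequently
$$\lambda = R_n(u,v) = R_n(t_n(u,v)(u,v)) = \max_{t>0}Q_n(t) = \Lambda_n(u,v).$$

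This last equality contradicts the standing hypothesis $\lambda < \lambda^*$: from the definition $\lambda^* = \inf_{(z,w)\in\mathcal{A}}\Lambda_n(z,w)$ (equivalently Remark \ref{lasqueira}) one has $\Lambda_n(u,v) \ge \lambda^* > \lambda$ for every $(u,v) \in \mathcal{A}$, whereas we just derived $\Lambda_n(u,v) = \lambda$. This contradiction shows that no such pair exists, that is, $\mathcal{N}^0_\lambda = \emptyset$. The only genuinely delicate points are the reduction to $\mathcal{A}$ in the first step, where the degenerate semitrivial case $\int|u|^\alpha|v|^\beta\,dx=0$ must be ruled out, and the use of the uniqueness of the critical point of $Q_n$ to identify $t_n(u,v)=1$; everything else is a direct translation between the sign of the second variation of $E_\lambda$ and the monotonicity of the Rayleigh quotient $R_n$ already encoded in Proposition \ref{tR_n'G' = E''} and Remark \ref{relação entre R_n' e E''}.
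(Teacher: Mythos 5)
Your proof is correct and follows essentially the same route as the paper: assume $(u,v)\in\mathcal{N}^0_\lambda$, deduce $t_n(u,v)=1$, and derive the contradiction $\lambda = R_n(u,v) = \Lambda_n(u,v) \ge \lambda^* > \lambda$. In fact you carefully justify two steps the paper asserts without comment — the reduction $\mathcal{N}^0_\lambda \subset \mathcal{A}$ (via the same computation as Remark \ref{N- cont A}, here with equality forcing $(u,v)=(0,0)$) and the identification $t_n(u,v)=1$ (via Proposition \ref{tR_n'G' = E''} and the uniqueness of the critical point of $Q_n$ from Lemma \ref{apendice Maxwell}) — so your write-up is a more complete version of the paper's argument.
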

	\begin{proof}
		The proof follows arguing by contradiction assuming that $\mathcal{N}^0_\lambda \ne \emptyset$. Let $(u, v) \in \mathcal{N}^0_\lambda$ be a fixed function. Therefore, $t_n(u, v) = 1$.  Notice also that $\lambda < \lambda^*$ and $$\lambda < \lambda^* = \inf_{(z, w)\in \mathcal{A}}\Lambda_n(z, w) \le \Lambda_n(u, v) =R_n(t_n(u, v)(u, v))  = R_n(u, v) = \lambda.$$
		This is a contradiction proving that $\mathcal{N}^0_{\lambda} = \emptyset$ for any $\lambda \in (0, \lambda^*)$. This ends the proof. 
	\end{proof}
	\begin{prop}\label{N0dif vazio}
		Suppose $(P)$, ($V_0$), ($V_1$) and $\lambda = \lambda^*$. Then $\mathcal{N}^0_{\lambda}\ne \emptyset$ holds.
	\end{prop}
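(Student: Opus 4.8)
The plan is to exhibit an explicit element of $\mathcal{N}^0_{\lambda^*}$ by exploiting the fact that, at the threshold $\lambda = \lambda^*$, the two critical points of the fibering map $\gamma_{\lambda}$ coalesce into the single degenerate maximum point of $Q_n$. Concretely, I would invoke Proposition \ref{lambda*ating} to fix a minimizer $(\bar u, \bar v) \in \mathcal{A}$ with $\Lambda_n(\bar u, \bar v) = \lambda^*$, and let $t^* := t_n(\bar u, \bar v) > 0$ be the unique maximum point of the fibering map $Q_n(t) = R_n(t\bar u, t\bar v)$, so that $Q_n(t^*) = \Lambda_n(\bar u, \bar v) = \lambda^*$ and $Q_n'(t^*) = 0$. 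The natural candidate is then $(z_1, z_2) := (t^*\bar u, t^*\bar v)$; since $(\bar u, \bar v) \in \mathcal{A}$ and $t^* > 0$ we have $\int_{\mathbb{R}^N}|z_1|^\alpha |z_2|^\beta\,dx > 0$, and in particular $(z_1, z_2) \neq (0,0)$.

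The first key step is to check that $(z_1, z_2) \in \mathcal{N}_{\lambda^*}$. This follows from the identity $R_n(z_1, z_2) = Q_n(t^*) = \lambda^*$ together with item (i) of Remark \ref{obs rel Rn eE'}, which yields $E'_{\lambda^*}(z_1, z_2)(z_1, z_2) = 0$, i.e. the Nehari identity \eqref{Nehari} holds for $(z_1,z_2)$.

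The second key step is the degeneracy. Since $\lambda^* = R_n(t^*\bar u, t^*\bar v)$, the hypothesis of Proposition \ref{tR_n'G' = E''} is met with $\lambda = \lambda^*$, $(u,v) = (\bar u, \bar v)$ and $t = t^*$, and therefore
\begin{equation*}
0 = Q_n'(t^*) = \left.\frac{d}{dt}R_n(t\bar u, t\bar v)\right|_{t = t^*} = \frac{1}{t^*}\frac{E''_{_\lambda^*}(z_1, z_2)(z_1, z_2)^2}{G(z_1, z_2)}.
\end{equation*}
Because $t^* > 0$ and $G(z_1, z_2) = \Vert z_1 \Vert_p^p + \Vert z_2 \Vert_q^q > 0$, this forces $E''_{_\lambda^*}(z_1, z_2)(z_1, z_2)^2 = 0$. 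Combining the two steps with the definition \eqref{N0} gives $(z_1, z_2) \in \mathcal{N}^0_{\lambda^*}$, hence $\mathcal{N}^0_{\lambda^*} \neq \emptyset$.

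I expect the only delicate point is ensuring that the minimizer $(\bar u, \bar v)$ produced by Proposition \ref{lambda*ating} genuinely lies in $\mathcal{A}$, so that $t_n(\bar u, \bar v)$ and the fibering analysis are available and $G(z_1, z_2) > 0$; this membership is already guaranteed in the statement of Proposition \ref{lambda*ating} (via the lower bound away from zero in Proposition \ref{tilde Longe de zero}). Everything else is a direct transcription of the $Q_n'$–$E''$ dictionary established in Proposition \ref{tR_n'G' = E''} and Remark \ref{relação entre R_n' e E''}, so no further estimates should be needed.
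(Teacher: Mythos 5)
Your proposal is correct and follows essentially the same route as the paper: fix a minimizer $(\bar u, \bar v) \in \mathcal{A}$ attaining $\lambda^*$ (Proposition \ref{lambda*ating}), project it to $t_n(\bar u, \bar v)(\bar u, \bar v)$, observe via Remark \ref{obs rel Rn eE'} that this point lies in $\mathcal{N}_{\lambda^*}$, and convert $Q_n'(t_n(\bar u,\bar v)) = 0$ into $E''_{\lambda^*} = 0$ through the identity of Proposition \ref{tR_n'G' = E''}. The only cosmetic difference is that you cite Proposition \ref{tR_n'G' = E''} directly (correctly noting $t^* > 0$ and $G > 0$) where the paper cites its consequence, Remark \ref{relação entre R_n' e E''}.
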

	\begin{proof}
		Let $(\tilde{u}, \tilde{v}) \in \mathcal{A}$ be fixed such that $\lambda^* = \Lambda_n(\tilde{u}, \tilde{v})$. Since $\lambda = \lambda^*$ it follows that $\lambda = \Lambda_n(\tilde{u}, \tilde{v}) = R_n(t_n(\tilde{u}, \tilde{v})(\tilde{u}, \tilde{v})) = max_{t>0} R_n(t (\tilde{u}, \tilde{v}))$. Hence, $t_n(\tilde{u}, \tilde{v})(\tilde{u}, \tilde{v}) \in \mathcal{N}_\lambda$. Furthermore, $\frac{d}{dt}R_n(t\tilde{u}, t\tilde{v}) = 0$ for $t = t_n(\tilde{u}, \tilde{v})$. Now, by using Proposition \ref{relação entre R_n' e E''},  we deduce that $E''(t_n(\tilde{u}, \tilde{v})(\tilde{u}, \tilde{v})) = 0$. Therefore $t_n(\tilde{u}, \tilde{v})(\tilde{u}, \tilde{v}) \in \mathcal{N}^0_\lambda$. The last assertion ensures that  $\mathcal{N}^0_\lambda \ne \emptyset$ holds true for $\lambda = \lambda^*$. This finishes the proof.
	\end{proof}
	\begin{rmk}
		It is important to emphasize that Proposition \ref{N0dif vazio} shows that $\mathcal{N}^0_{\lambda} \ne \emptyset$ for $\lambda = \lambda^*$. More generality, we can show that $t_n(u, v)(u, v) \in \mathcal{N}^0_{\lambda}$ whenever $\lambda = \Lambda_n(u, v)$. In other words, $\lambda = \lambda^*$ is the first positive value such that $\mathcal{N}^0_{\lambda} \ne \emptyset$.
	\end{rmk}
	\begin{prop}\label{Nehari - longe de zero}
		Suppose $(P)$, ($V_0$) and ($V_1$). Assume also that $\lambda \in ( 0, \lambda^*)$. Then there exists $C > 0$ such that $\Vert (u, v) \Vert \ge C > 0$ for each $(u, v) \in \mathcal{N}^-_{\lambda}$.
	\end{prop}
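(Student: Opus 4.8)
The plan is to show that on $\mathcal{N}^-_\lambda$ the superlinear coupling term controls the norm from below, namely that there is a constant $c_0>0$ with $\theta\int_{\mathbb{R}^N}|u|^\alpha|v|^\beta\,dx \ge c_0\Vert(u,v)\Vert^2$, and then to play this against the Hölder--Sobolev \emph{upper} bound for the very same integral. First I would record, via Remark \ref{N- cont A}, that every $(u,v)\in\mathcal{N}^-_\lambda$ lies in $\mathcal{A}$, so that $I:=\int_{\mathbb{R}^N}|u|^\alpha|v|^\beta\,dx>0$; this is exactly what makes the coupling term usable as a positive quantity. Note that the resulting constant will depend only on $\theta$ and the exponents, not on $\lambda$.

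To produce the lower bound I would combine the two constraints defining $\mathcal{N}^-_\lambda$: the Nehari identity $E'_\lambda(u,v)(u,v)=0$ from \eqref{der ener}, which reads $\Vert(u,v)\Vert^2=\lambda\Vert u\Vert_p^p+\lambda\Vert v\Vert_q^q+\theta I$, together with the strict inequality $E''_\lambda(u,v)(u,v)^2<0$ from \eqref{der seg ener}. Substituting $\theta I=\Vert(u,v)\Vert^2-\lambda\Vert u\Vert_p^p-\lambda\Vert v\Vert_q^q$ into the $E''$ inequality eliminates the coupling integral and yields
\begin{equation*}
(\alpha+\beta-1)\Vert(u,v)\Vert^2 > \lambda(\alpha+\beta-p+1)\Vert u\Vert_p^p + \lambda(\alpha+\beta-q+1)\Vert v\Vert_q^q .
\end{equation*}
Since $p\le q$, the smaller of the two coefficients on the right is $\alpha+\beta-q+1$; bounding both terms below by it, and then using the Nehari identity once more to replace $\lambda(\Vert u\Vert_p^p+\Vert v\Vert_q^q)=\Vert(u,v)\Vert^2-\theta I$, the inequality collapses to $(\alpha+\beta-q+1)\theta I>(2-q)\Vert(u,v)\Vert^2$, that is, $\theta I> c_0\Vert(u,v)\Vert^2$ with $c_0=(2-q)/(\alpha+\beta-q+1)>0$.

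Finally I would estimate the same integral from above exactly as in Proposition \ref{tilde Longe de zero}: by Hölder's inequality and the embedding of Lemma \ref{chave},
$$\int_{\mathbb{R}^N}|u|^\alpha|v|^\beta\,dx\le \Vert u\Vert_{\alpha+\beta}^\alpha\,\Vert v\Vert_{\alpha+\beta}^\beta\le S_{\alpha+\beta}^{\alpha+\beta}\Vert(u,v)\Vert^{\alpha+\beta}.$$
Combining the two estimates gives $c_0\Vert(u,v)\Vert^2\le\theta S_{\alpha+\beta}^{\alpha+\beta}\Vert(u,v)\Vert^{\alpha+\beta}$, and since $\alpha+\beta-2>0$ this forces $\Vert(u,v)\Vert\ge\bigl(c_0/(\theta S_{\alpha+\beta}^{\alpha+\beta})\bigr)^{1/(\alpha+\beta-2)}=:C>0$, which is the claim.

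The only delicate point is the derivation of the linear lower bound $\theta I> c_0\Vert(u,v)\Vert^2$. A naive estimate that merely uses $p-1<1$ and $q-1<1$ in the $E''$ inequality throws away all quantitative information and recovers only $I>0$, which is useless here. The key is to keep the exact coefficients after eliminating the coupling term and to exploit the ordering $p\le q$ so that the factor $2-q$ survives; this is precisely where the hypothesis $q<2$ does the work, and it is the step I expect to require the most care.
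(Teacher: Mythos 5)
Your proof is correct and takes essentially the same route as the paper: both arguments combine the Nehari identity with the $\mathcal{N}^-_{\lambda}$ inequality to eliminate the concave terms and arrive at a bound of the form $(2-q)\Vert(u,v)\Vert^2 < C(\alpha,\beta,q)\,\theta\int_{\mathbb{R}^N}|u|^{\alpha}|v|^{\beta}\,dx$, and then close with the identical H\"older--Sobolev estimate $\int_{\mathbb{R}^N}|u|^{\alpha}|v|^{\beta}\,dx \le S_{\alpha+\beta}^{\alpha+\beta}\Vert(u,v)\Vert^{\alpha+\beta}$ from Lemma \ref{chave}. The only difference is cosmetic: the paper bounds $\lambda p\Vert u\Vert_p^p + \lambda q\Vert v\Vert_q^q \le q\lambda\left(\Vert u\Vert_p^p + \Vert v\Vert_q^q\right)$ before substituting the Nehari identity, obtaining the factor $\alpha+\beta-q$, whereas your substitution of $\theta I$ directly into \eqref{der seg ener} produces the slightly larger denominator $\alpha+\beta-q+1$, which is immaterial for the conclusion.
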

	\begin{proof}
		Let $(u, v) \in \mathcal{N}^-_\lambda$ be a fixed function. Hence, we deduce that
		\begin{equation}\label{equação de Nehari} 
			\lambda \Vert u \Vert ^{p}_{p} + \lambda \Vert v \Vert ^{q}_{q}= \Vert (u, v) \Vert ^2  - \theta\int\limits_\mathbb{R^N} |u|^\alpha |v|^\beta dx
		\end{equation}
		and
		\begin{eqnarray*}\label{equação de Nehari -}
			2\Vert (u, v) \Vert ^2 - \theta ( \alpha +\beta) \int\limits_\mathbb{R^N} |u|^\alpha |v|^\beta dx & < & \lambda p \Vert u \Vert ^{p}_{p} + \lambda q \Vert v \Vert ^{q}_{q}.\\
		\end{eqnarray*}
		Without loss of generality we assume that $ q \leq p$. Under these conditions,  by using \eqref{equação de Nehari}, we obtain that 
		\begin{equation}\label{ineq norma e integ} 
			(2 - q)\Vert (u, v) \Vert ^2 < \theta (\alpha + \beta - q)\int\limits_\mathbb{R^N} |u|^\alpha |v|^\beta dx.
		\end{equation}
		According to Lemma \ref{chave} we obtain also that $$\int\limits_\mathbb{R^N} |u|^\alpha |v|^\beta dx \le S^{\alpha + \beta}_{\alpha + \beta}\Vert(u, v)\Vert^{\alpha + \beta}.$$
		In particular, by using \eqref{equação de Nehari} and the last estimate, we see that
		\begin{eqnarray*}\label{2ª equação de Nehari -}
			\Vert(u, v)\Vert & \ge &  \left (\frac{2 - \theta  q}{\theta [\alpha + \beta - q]S^{\alpha + \beta}_{\alpha + \beta}}\right )^{\frac{1}{\alpha + \beta - 2}}.
		\end{eqnarray*}
		Hence, the desired result follows for $C = C(p, q, \theta, \alpha , \beta) = \left (\frac{2 - \theta q}{\theta [\alpha + \beta - q]S^{\alpha + \beta}_{\alpha + \beta}  }\right )^{\frac{1}{\alpha + \beta - 2}}$.
	\end{proof}
	\begin{rmk}\label{limdifsemtriv}
		It is worthwhile to mention that \eqref{ineq norma e integ} implies that for each sequence $(u_k, v_k) \in \mathcal{N}^-_{\lambda}$ such that $(u_k, v_k) \rightharpoonup (u, v)$ we obtain that $u \ne 0$ and $v \ne 0$. Here was used the Lemma \ref{chave} which provide us the compact embedding.
	\end{rmk} 
	\begin{prop}\label{chave2}
		Suppose $(P)$, ($V_0$) and ($V_1$). Let $(u_k, v_k)$ a sequence in $\mathcal{N}^-_{\lambda}$ such that $(u_k, v_k) \rightharpoonup (u, v)$ in $X$. Then there exists $\delta_C > 0$ such that 
		$$\int_{\mathbb{R}^N} |u|^\alpha|v|^\beta dx \ge \delta_C > 0.$$
	\end{prop}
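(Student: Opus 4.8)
The plan is to pass the lower estimate \eqref{ineq norma e integ}, which is valid uniformly along the sequence, to the weak limit by exploiting the compactness of the embedding of $X$ into the space governing the coupling term. First, since $(u_k, v_k) \rightharpoonup (u,v)$ in $X$, the sequence is bounded in $X$. Because $\alpha + \beta < 2^*_s$ by hypothesis $(P)$, Lemma \ref{chave} guarantees that, up to a subsequence, $u_k \to u$ in $L^{\alpha+\beta}(\mathbb{R}^N)$ and $v_k \to v$ in $L^{\alpha+\beta}(\mathbb{R}^N)$. I would then establish the convergence of the coupling integral, namely
$$\lim_{k\to\infty}\int_{\mathbb{R}^N}|u_k|^\alpha|v_k|^\beta\,dx = \int_{\mathbb{R}^N}|u|^\alpha|v|^\beta\,dx,$$
exactly as in Proposition \ref{lambda frac Sem Inf}: writing the difference as $|u_k|^\alpha|v_k|^\beta - |u|^\alpha|v|^\beta = (|u_k|^\alpha-|u|^\alpha)|v_k|^\beta + |u|^\alpha(|v_k|^\beta-|v|^\beta)$ and applying H\"older's inequality with the conjugate exponents $(\alpha+\beta)/\alpha$ and $(\alpha+\beta)/\beta$, the boundedness of $\Vert v_k\Vert_{\alpha+\beta}$ together with the Dominated Convergence Theorem (along an a.e.\ convergent subsequence dominated in $L^{\alpha+\beta}$) forces each term to vanish.

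Next, since $(u_k, v_k) \in \mathcal{N}^-_\lambda$ for every $k$, the estimate \eqref{ineq norma e integ} obtained in Proposition \ref{Nehari - longe de zero} applies to each element of the sequence, giving
$$(2-q)\Vert(u_k,v_k)\Vert^2 < \theta(\alpha+\beta-q)\int_{\mathbb{R}^N}|u_k|^\alpha|v_k|^\beta\,dx.$$
Invoking Proposition \ref{Nehari - longe de zero} once more, which furnishes a constant $C>0$ with $\Vert(u_k,v_k)\Vert \ge C$ for all $k$, I obtain the uniform lower bound
$$\int_{\mathbb{R}^N}|u_k|^\alpha|v_k|^\beta\,dx > \frac{(2-q)C^2}{\theta(\alpha+\beta-q)}.$$
The right-hand side is strictly positive because $q < 2 < \alpha + \beta$ and $\theta>0$ by $(P)$.

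Finally, letting $k\to\infty$ and using the convergence of the coupling integral established in the first step, the strict inequality relaxes to a non-strict one in the limit, so that
$$\int_{\mathbb{R}^N}|u|^\alpha|v|^\beta\,dx \ge \delta_C := \frac{(2-q)C^2}{\theta(\alpha+\beta-q)} > 0,$$
which is precisely the assertion. I expect the main obstacle to be the convergence of the coupling term in the first step; the rest is a direct passage to the limit. The delicate point there is that weak convergence in $X$ is by itself insufficient, so it is essential to use the strict inequality $\alpha+\beta<2^*_s$ to land in the subcritical range where Lemma \ref{chave} yields compactness, and to treat the nonlinear superpositions $t\mapsto|t|^\alpha$ and $t\mapsto|t|^\beta$ through a Nemytskii-type continuity argument rather than through naive weak limits.
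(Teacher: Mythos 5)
Your proposal is correct and takes essentially the same route as the paper's proof: both combine the uniform lower bound coming from \eqref{ineq norma e integ} and Proposition \ref{Nehari - longe de zero} with the compact embedding of Lemma \ref{chave} and a dominated-convergence argument to pass the coupling integral to the weak limit. If anything, your version is more careful in two respects the paper glosses over: you justify the convergence of the coupling term explicitly via the Nemytskii splitting and H\"older's inequality, and your constant $\delta_C = (2-q)C^2/\bigl(\theta(\alpha+\beta-q)\bigr)$ is the correct bookkeeping, whereas the paper states the bound as $(2-q)C$.
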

	\begin{proof}
		Initially, we observe that $(u_k, v_k) \in \mathcal{N}^-_{\lambda}$. Now, using (\ref{ineq norma e integ} ) together wit Proposition \ref{Nehari - longe de zero}, we infer that
		$$\int\limits_{\mathbb{R}^N} \vert{u}_k\vert^\alpha\vert{v}_k\vert^\beta dx > (2 - q) C > 0.$$
		Hence, by using the Dominated Convergence Theorem, we also obtain that $$\int\limits_{\mathbb{R}^N} \vert{u}\vert^\alpha\vert{v}\vert^\beta dx \ge \delta_C = (2 - q) C > 0.$$
		This completes the proof.
	\end{proof}
	\begin{prop}\label{soluçao N-}
		Suppose $(P)$, ($V_0$) and ($V_1$). Assume also that $\lambda \in ( 0, \lambda^*)$. Then $\mathcal{N}^-_\lambda$ is a closed which is away from zero. Furthermore, there exists $(u, v) \in \mathcal{N}^-_\lambda$ such that $$C_{\mathcal{N}^-_\lambda} = \inf\limits_{(z, w) \in \mathcal{N}^-_\lambda}E_{_\lambda}(z, w) = E_{_\lambda}(u, v)$$ where $(u, v)$ is a weak solution to the System (\ref{sistema Principal}).
	\end{prop}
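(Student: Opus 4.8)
The plan is to split the statement into three assertions---that $\mathcal{N}^-_\lambda$ is closed and bounded away from zero, that the infimum $C_{\mathcal{N}^-_\lambda}$ is attained, and that the minimizer solves \eqref{sistema Principal} weakly---and to treat them in this order. The away-from-zero property is immediate from Proposition \ref{Nehari - longe de zero}. For closedness, I would take $(u_k,v_k)\in\mathcal{N}^-_\lambda$ with $(u_k,v_k)\to(u,v)$ in $X$, pass to the limit in the Nehari identity $E'_{_\lambda}(u_k,v_k)(u_k,v_k)=0$ (all terms are continuous under strong convergence) to obtain $(u,v)\in\mathcal{N}_\lambda$, where $(u,v)\neq(0,0)$ precisely because of the uniform lower bound; passing to the limit in the strict inequality $E''_{_\lambda}(u_k,v_k)(u_k,v_k)^2<0$ yields $E''_{_\lambda}(u,v)(u,v)^2\le 0$, and since $\mathcal{N}^0_\lambda=\emptyset$ for $\lambda\in(0,\lambda^*)$ the limit cannot lie in $\mathcal{N}^0_\lambda$, so $(u,v)\in\mathcal{N}^-_\lambda$.

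For attainment, I would pick a minimizing sequence $(u_k,v_k)\in\mathcal{N}^-_\lambda$ for $C_{\mathcal{N}^-_\lambda}$. By the coercivity of $E_{_\lambda}$ on $\mathcal{N}_\lambda$ (Proposition \ref{E coersiva}) this sequence is bounded, so up to a subsequence $(u_k,v_k)\rightharpoonup(u,v)$ in $X$. The compact embedding of Lemma \ref{chave} gives $\Vert u_k\Vert_p^p\to\Vert u\Vert_p^p$, $\Vert v_k\Vert_q^q\to\Vert v\Vert_q^q$ and $\int_{\mathbb{R}^N}|u_k|^\alpha|v_k|^\beta\,dx\to\int_{\mathbb{R}^N}|u|^\alpha|v|^\beta\,dx$, while Proposition \ref{chave2} furnishes $\int_{\mathbb{R}^N}|u|^\alpha|v|^\beta\,dx\ge\delta_C>0$; hence $(u,v)\in\mathcal{A}$ with $u\neq 0$ and $v\neq 0$, cf. Remark \ref{limdifsemtriv}.

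The crucial step is to upgrade this weak limit to a genuine minimizer. Since $(u,v)\in\mathcal{A}$ and $\lambda\in(0,\lambda^*)$, Proposition \ref{tn-,tn+} provides the projection $\bar t=t_n^-(u,v)$ with $(\bar u,\bar v):=\bar t(u,v)\in\mathcal{N}^-_\lambda$, so that $C_{\mathcal{N}^-_\lambda}\le E_{_\lambda}(\bar u,\bar v)$. I would then argue by contradiction that the convergence is strong: if $\Vert(u,v)\Vert<\liminf_k\Vert(u_k,v_k)\Vert$, then because only the Gagliardo-plus-potential norm term fails to converge (the $L^p$, $L^q$ and coupling terms converge by Lemma \ref{chave}) one gets the \emph{strict} inequality $E_{_\lambda}(\bar t u,\bar t v)<\liminf_k E_{_\lambda}(\bar t u_k,\bar t v_k)$; combining this with the maximality of the fiber map $t\mapsto E_{_\lambda}(tu_k,tv_k)$ at its outer critical point $t_n^-(u_k,v_k)=1$, which gives $E_{_\lambda}(\bar t u_k,\bar t v_k)\le E_{_\lambda}(u_k,v_k)$, would yield $E_{_\lambda}(\bar u,\bar v)<\liminf_k E_{_\lambda}(u_k,v_k)=C_{\mathcal{N}^-_\lambda}$, contradicting $C_{\mathcal{N}^-_\lambda}\le E_{_\lambda}(\bar u,\bar v)$. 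I expect this fiber comparison to be the main obstacle: when $\lambda>\lambda_*$ the energy is negative on $\mathcal{N}^-_\lambda$, so one must verify $E_{_\lambda}(\bar t u_k,\bar t v_k)\le E_{_\lambda}(u_k,v_k)$ with care, checking the position of $\bar t$ relative to $t_n^{\pm}(u_k,v_k)$ or, equivalently, invoking that $t_n^-$ realizes the maximum of the fiber on $[t_n^+,\infty)$. Once strong convergence is established, $(u,v)\in\mathcal{N}^-_\lambda$ by closedness and $E_{_\lambda}(u,v)=\lim_k E_{_\lambda}(u_k,v_k)=C_{\mathcal{N}^-_\lambda}$.

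Finally, to see that the minimizer is a weak solution, I would use that $\mathcal{N}^0_\lambda=\emptyset$ makes $\mathcal{N}_\lambda$ a $C^1$-manifold on which $\mathcal{N}^-_\lambda$ is relatively open, so $(u,v)$ is a constrained critical point of $E_{_\lambda}$ subject to $\Phi(u,v):=E'_{_\lambda}(u,v)(u,v)=0$. The Lagrange multiplier theorem then yields $\mu\in\mathbb{R}$ with $E'_{_\lambda}(u,v)=\mu\,\Phi'(u,v)$; testing against $(u,v)$ and using $E'_{_\lambda}(u,v)(u,v)=0$ together with $\Phi'(u,v)(u,v)=E''_{_\lambda}(u,v)(u,v)^2<0$ (the minimizer lies in $\mathcal{N}^-_\lambda$) forces $\mu=0$. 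Hence $E'_{_\lambda}(u,v)=0$, that is, $(u,v)$ is a weak solution of \eqref{sistema Principal}, as claimed.
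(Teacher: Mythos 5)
Your proposal follows essentially the same route as the paper's proof of Proposition \ref{soluçao N-}: a minimizing sequence bounded via coercivity (Proposition \ref{E coersiva}), nontriviality of the weak limit via Proposition \ref{chave2}, strong convergence by contradiction using the projection $t_n^-(u,v)$ together with the fiber comparison $E_{_\lambda}(tu_k,tv_k)\le E_{_\lambda}(u_k,v_k)$ on $[t_n^+(u_k,v_k),1]$ (resting on $t_n^-(u_k,v_k)=1$), and finally the Lagrange multiplier argument with $\Phi(u,v)=E'_{_\lambda}(u,v)(u,v)$ and $\Phi'(u,v)(u,v)=E''_{_\lambda}(u,v)(u,v)^2<0$ forcing the multiplier to vanish. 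The delicate point you flag, namely locating $\bar t$ relative to $t_n^{\pm}(u_k,v_k)$ when the energy level is negative, is precisely the step the paper treats by deriving $t_n^-(u,v)<1$ from weak lower semicontinuity of $E'_{_\lambda}$ and restricting the comparison to that interval, so your plan matches the published argument.
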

	\begin{proof}
		Let $(u_k, v_k)$ be a minimizer sequence for $E_{_\lambda}$ in $\mathcal{N}^-_\lambda$. In particular, we know that $E_{_\lambda}(u_k, v_k) \to C_{\mathcal{N}^-_\lambda}$ as $k \to \infty$. Recall also that $$\lambda < \lambda^* = \inf\limits_{(z, w) \in \mathcal{A}}\Lambda_n(z, w) < \Lambda_n(z, w)$$ holds true for each $(z, w) \in \mathcal{A}$. Furthermore, we observe that $(u_k, v_k)$ is a bounded sequence, see Proposition \ref{E coersiva}.
		Hence, there exists $(u,v) \in X$ such that $(u_k, v_k) \rightharpoonup (u, v)$ in $X$. Now, we claim that $(u, v) \ne (0, 0)$. The proof for this claim follows arguing by contradiction. Assume that $(u, v) = (0, 0)$. Hence, by using Lemma \ref{chave} and Proposition \ref{chave2}, we obtain $$0 < \delta_C \leq \int\limits_{\mathbb{R}^N} \vert{u}_k\vert^\alpha\vert{v}_k\vert^\beta dx \to 0.$$
		This is a contradiction proving the desired claim. It remains to prove that $(u_k, v_k) \to (u, v)$ in $X$. Once again the proof follows arguing by contradiction. Assume that $(u_k, v_k)$
		does not strong converge to $(u, v)$ in $X$. Now, by using Proposition \ref{tn-,tn+}, there exists $t_n^-(u, v) >0$ such that $t_n^-(u, v)(u, v) \in \mathcal{N}^-_\lambda$. The last assertion implies that $C_{\mathcal{N}^-_\lambda}\le E_{_\lambda}(t_n^-(u, v) (u, v))$. Notice also that $E'_{_\lambda}(u_k, v_k)(u_k, v_k) = 0$ and $E''_{_\lambda}(u_k, v_k)(u_k, v_k)^2 < 0$ hold for each $k \in \mathbb{N}$. Furthermore, by using Proposition \ref{Nehari - longe de zero} and Proposition \ref{chave2}, we observe that $$\Vert (u_k, v_k) \Vert \ge C > 0 \,\, \mbox{and} \,\, \int\limits_{\mathbb{R^N}}\vert u_k\vert^{\alpha}\vert v_k \vert ^{\beta} \ge \delta_c > 0$$ hold for each $k \in \mathbb{N}$. Here we emphasize that $t_n^-(u_k, v_k) = 1$ holds for each $t > 0$. Recall also that $(u,v) \mapsto E_\lambda(u,v)$ is weakly lower semicontinuous. Hence, we obtain that 
		\begin{equation}\label{cn-} 
			C_{\mathcal{N}^-_\lambda} < \liminf\limits_{k \to \infty}E_{_\lambda}(t_n^-(u, v)(u_k, v_k)).
		\end{equation}
		It is not hard to see that $$E'_{_\lambda}(u, v)(u, v) < \liminf\limits_{k \to \infty }E'_{_\lambda}(u_k, v_k)(u_k, v_k) = 0, E''_{_\lambda}(u, v)(u, v)^2 < \displaystyle\liminf_{k \to \infty} E''_{_\lambda}(u_k, v_k)(u_k, v_k)^2 \le 0$$ and $$\gamma_{_\lambda}'(t) =E'_{_\lambda}(tu, tv)(u, v), E'_{_\lambda}(u, v)(u, v) =  \left. \frac{\partial E_{_\lambda}}{\partial t} (tu, tv) \right|_{t=1} = \left. \frac{\partial \gamma_{_\lambda} }{\partial t}(t) \right|_{t=1}.$$  
		Under these conditions, we obtain that $t_n^-(u, v) < 1$. Furthermore, by using the fact that $t_n^-(u_k, v_k) = 1$, we infer that
		$$E_{_\lambda}(tu_k, tv_k) \le \max_{t\in [ t_n^+(u_k, v_k), t_n^-(u_k, v_k)]} E_{_\lambda}(tu_k, tv_k) = E_{_\lambda}(t_n^-(u_k, v_k)(u_k, v_k)) = E_{_\lambda}(u_k, v_k).$$
		Moreover, we observe that
		\begin{equation}\label{E(tuk,tvk)} 
			E_{_\lambda}(tu_k, tv_k) \le E_{_\lambda}(u_k, v_k), \,\, t \in [t_n^+(u_k, v_k), 1].
		\end{equation}
		Now, by using (\ref{cn-}) and (\ref{E(tuk,tvk)}), we also obtain that
		$$C_{\mathcal{N}^-_\lambda} < \liminf\limits_{k \to \infty}E_{_\lambda}(t_n^-(u, v)(u_k, v_k)) \le \liminf\limits_{k \to \infty}E_{_\lambda}(u_k, v_k) = C_{\mathcal{N}^-_\lambda}.$$
		This is a contradiction proving that $$\Vert (u, v) \Vert^2 = \liminf\limits_{k \to \infty}\Vert (u_k, v_k) \Vert^2.$$ Now, by using the fact that $X$ is Hilbert space together with the fact that $(u_k, v_k)\rightharpoonup (u, v)$ and $\Vert(u_k, v_k) \Vert \to \Vert (u,v)\Vert$, we deduce $(u_k, v_k) \to (u, v)$ in $X$.
		
		At this stage, we observe that the energy functional $E_{_\lambda}$ belongs to $C^1(X, \mathbb{R})$. It is important to stress that the functional $(u,v) \mapsto E''_{_\lambda}(u, v)(u, v)^2$ is well defined and continuous.  As a consequence, we obtain that $$C_{\mathcal{N}^-_\lambda} = \lim\limits_{k \to \infty} E_{_\lambda}(u_k, v_k) = E_{_\lambda}(u, v) \,\, \mbox{and} \,\, E'_{_\lambda}(u, v)(u, v) = 0.$$ 
		Hence, $(u, v) \in \mathcal{N}_\lambda$.  Similarly, we obtain that $E''_{_\lambda}(u, v)(u, v)^2 \le 0$ holds for each $\lambda \in (0,  \lambda^*)$.  Recall also that $\mathcal{N}^0_\lambda = \emptyset$ for each $\lambda \in (0,  \lambda^*)$. Under these conditions, we obtain that $E_{_\lambda}''(u, v)(u, v)^2 < 0$. The last assertion implies that $(u, v) \in \mathcal{N}^-_\lambda$. Hence, $C_{\mathcal{N}^-_\lambda}$ is attained in the set $\mathcal{N}^-_\lambda$. It remains to ensure that $(u, v)$ is a critical point of $E_{_\lambda}$ in $X$, that is, $E'_{_\lambda}(u, v)(\phi, \psi) = 0$ holds for each $(\phi, \psi) \in X$. The last assertion implies that $(u, v)$ is a weak nontrivial for the System (\ref{sistema Principal}). The main ingredient here is to apply the Lagrange Multiplier Theorem, see \cite[Theorem 7.8.2]{DRABEK}. Notice that minimum for the energy functional $E$ restricted to $\mathcal{N}_\lambda^-$ is attained by a function  $(u, v) \in \mathcal{N}^-_\lambda$. Define $R: \mathcal{A} \to \mathbb{R}$ which is given by $R(u, v) = E'_{_\lambda}(u,v)(u, v)$. It is not hard to see that $R$ belongs to $C^1(\mathcal{A}, \mathbb{R})$. Furthermore, we observe that $R^{-1}(0) = \mathcal{N}^-_\lambda \cup \mathcal{N}^+_\lambda $. Here we also mention that $\mathcal{A}$ is an open cone set in $X$.
		Recall also that $R'(u, v) (\phi, \psi) = E''_{_\lambda}(u, v)((\phi, \psi)(u, v)) + E'_{_\lambda}(u, v)(\phi, \psi)$ is verified for each $(\phi, \psi) \in X$. Now, using the Lagrange Multiplier Theorem, there exists $\theta \in \mathbb{R}$ such that
		\begin{equation}\label{Teo mult lag} 
			E'_{_\lambda}(u, v)(\phi, \psi) = \theta R'(u, v)(\phi, \psi), \,\, (\phi, \psi) \in X.
		\end{equation}
		In particular,  for $(\phi, \psi)=(u, v)$, we infer that
		\begin{equation}\label{R'(u,v)(u, v)} 
			R'(u, v)(u, v) = E''_{_\lambda}(u, v)(u, v)^2.
		\end{equation}
		Notice that $(u,v) \in \mathcal{N}_\lambda^-$ implies that
		$R'(u, v)(u, v) < 0$. Hence, by using (\ref{Teo mult lag}), we see that $0 = \theta R'(u, v)(u, v)$. Therefore, $\theta = 0$ which implies that 
		$$E'_{_\lambda}(u, v)(\phi, \psi) = 0, \,\, (\phi, \psi) \in X.$$
		Under these conditions, we observe that $C_{\mathcal{N}^-_\lambda} = E_{_\lambda}(u, v), \, \,(u, v) \in \mathcal{N}^-_\lambda$ and $E'_{_\lambda}(u, v)(\phi, \psi) = 0, (\phi, \psi) \in X.$ This ends the proof.
	\end{proof}
	\begin{prop}\label{CN+ neg}
		Suppose $(P)$, ($V_0$) and ($V_1$). Assume that $\lambda \in (0, \lambda^*)$. Then $C_{\mathcal{N}^+_\lambda\cap \mathcal{A}}= E_{_\lambda}(u, v) < 0$.
	\end{prop}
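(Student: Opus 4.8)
The plan is to establish two things: that $C_{\mathcal{N}^+_\lambda\cap\mathcal{A}}$ is strictly negative, and that it is attained at some $(u,v)\in\mathcal{N}^+_\lambda\cap\mathcal{A}$. The negativity is the core and comes from a direct inspection of the fibering map. Fix any $(u,v)\in\mathcal{A}$ and set $\gamma_{_\lambda}(t)=E_{_\lambda}(tu,tv)$. Since $\mathcal{A}$ is a cone, Proposition \ref{tn-,tn+} provides $t_n^+(u,v)$, the smaller of the two critical points of $\gamma_{_\lambda}$ and a local minimum, with $t_n^+(u,v)(u,v)\in\mathcal{N}^+_\lambda\cap\mathcal{A}$. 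First I would observe that, because $1\le p\le q<2<\alpha+\beta$, dividing $\gamma_{_\lambda}(t)$ by $t^{p}$ and letting $t\to 0^+$ produces the negative limit $-\frac{\lambda}{p}\Vert u\Vert_p^p$ (with the extra term $-\frac{\lambda}{q}\Vert v\Vert_q^q$ when $p=q$), which is strictly negative since $(u,v)\in\mathcal{A}$ forces $u\neq 0$. Hence $\gamma_{_\lambda}(t)<0=\gamma_{_\lambda}(0)$ for all small $t>0$. As $t_n^+(u,v)$ is the first critical point, $\gamma_{_\lambda}'$ keeps its sign on $(0,t_n^+(u,v))$, so $\gamma_{_\lambda}$ is strictly decreasing there and $\gamma_{_\lambda}(t_n^+(u,v))<0$. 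Therefore $C_{\mathcal{N}^+_\lambda\cap\mathcal{A}}\le E_{_\lambda}(t_n^+(u,v)(u,v))<0$, and the infimum is bounded below by the coercivity of $E_{_\lambda}$ on $\mathcal{N}_\lambda$ (Proposition \ref{E coersiva}).

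For the attainment I would take a minimizing sequence $(u_k,v_k)\in\mathcal{N}^+_\lambda\cap\mathcal{A}$ with $E_{_\lambda}(u_k,v_k)\to C_{\mathcal{N}^+_\lambda\cap\mathcal{A}}$. Coercivity makes it bounded, so up to a subsequence $(u_k,v_k)\rightharpoonup(u,v)$ in $X$, and the compact embedding of Lemma \ref{chave} gives $\Vert u_k\Vert_p^p\to\Vert u\Vert_p^p$, $\Vert v_k\Vert_q^q\to\Vert v\Vert_q^q$ and $\int_{\mathbb{R}^N}|u_k|^\alpha|v_k|^\beta\,dx\to\int_{\mathbb{R}^N}|u|^\alpha|v|^\beta\,dx$. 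The limit is nontrivial: were $(u,v)=(0,0)$, all lower order terms would vanish and $E_{_\lambda}(u_k,v_k)=\frac12\Vert(u_k,v_k)\Vert^2+o(1)\ge o(1)$, contradicting $C_{\mathcal{N}^+_\lambda\cap\mathcal{A}}<0$. Granting that $(u,v)\in\mathcal{A}$, I would then project via the fibering map and compare $E_{_\lambda}(t_n^+(u,v)(u,v))$ with $\liminf_k E_{_\lambda}(u_k,v_k)$ using the weak lower semicontinuity of $E_{_\lambda}$, arguing exactly as in Proposition \ref{soluçao N-} to upgrade to strong convergence $(u_k,v_k)\to(u,v)$ in $X$; this yields $E_{_\lambda}(u,v)=C_{\mathcal{N}^+_\lambda\cap\mathcal{A}}$, and since $\mathcal{N}^0_\lambda=\emptyset$ for $\lambda\in(0,\lambda^*)$ the limit lands in $\mathcal{N}^+_\lambda$, the Lagrange Multiplier Theorem promoting it (as in Proposition \ref{soluçao N-}) to a genuine critical point of $E_{_\lambda}$.

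I expect the one genuinely delicate point to be the membership $(u,v)\in\mathcal{A}$, that is $\int_{\mathbb{R}^N}|u|^\alpha|v|^\beta\,dx>0$. In contrast with $\mathcal{N}^-_\lambda$, where Proposition \ref{Nehari - longe de zero} and Proposition \ref{chave2} supply a uniform lower bound $\int_{\mathbb{R}^N}|u|^\alpha|v|^\beta\,dx\ge\delta_C>0$, on $\mathcal{N}^+_\lambda$ the coupling term can be arbitrarily small, so the minimizing sequence could a priori degenerate towards a semitrivial configuration $(u,0)$ or $(0,v)$, for which the coupling integral vanishes and $(u,v)\notin\mathcal{A}$. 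Ruling this out is precisely the problem of excluding semitrivial minimizers, and it cannot be settled by the integral estimates alone; the resolution is to construct, near any would-be semitrivial limit, an admissible curve inside $\mathcal{N}^+_\lambda\cap\mathcal{A}$ along which the energy strictly decreases (the coupling term enters $E_{_\lambda}$ with a negative sign and $\theta>0$), thereby contradicting the minimality of $C_{\mathcal{N}^+_\lambda\cap\mathcal{A}}$. This curve/contradiction argument, valid for every $\theta>0$, is the technical crux and is where the nonlinear Rayleigh quotient machinery developed above becomes indispensable.
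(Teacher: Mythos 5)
Your proposal is correct, but the heart of the matter — the strict negativity — travels a different road than the paper's. You obtain $E_{_\lambda}(t_n^+(u,v)(u,v))<0$ by elementary asymptotics of the fibering map: since $1\le p\le q<2<\alpha+\beta$, the concave terms dominate as $t\to 0^+$, so $\gamma_{_\lambda}(t)<0$ for all small $t>0$ (here $u\ne 0$, supplied by $(u,v)\in\mathcal{A}$, is what you need), and because Proposition \ref{tn-,tn+} guarantees exactly two critical points, $\gamma_{_\lambda}'$ cannot change sign on $\left(0,t_n^+(u,v)\right)$, forcing $\gamma_{_\lambda}(t_n^+(u,v))<0$. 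The paper instead stays entirely inside the Rayleigh-quotient formalism: by Proposition \ref{lema_naruprop} and Remark \ref{lema_naru} one has $R_e(tu,tv)<R_n(tu,tv)$ for $t\in\left(0,t_e(u,v)\right)$, the ordering $t_n^+(u,v)<t_n(u,v)<t_e(u,v)$ then gives $R_e(t_n^+(u,v)(u,v))<R_n(t_n^+(u,v)(u,v))=\lambda$, and Remark \ref{obs rel Re e E} translates this into $E_{_\lambda}(t_n^+(u,v)(u,v))<0$. Your route is more elementary and self-contained; the paper's buys uniformity with the method it reuses in the proof of Theorem \ref{teorem2} to read off the sign of $C_{\mathcal{N}^-_{\lambda}}$ in the regimes $\lambda<\lambda_*$, $\lambda=\lambda_*$, $\lambda\in(\lambda_*,\lambda^*)$. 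One scope remark: despite the wording of the statement, the paper's own proof of this proposition establishes only the bound $C_{\mathcal{N}^+_\lambda\cap\mathcal{A}}\le E_{_\lambda}(t_n^+(u,v)(u,v))<0$; attainment, strong convergence, the Lagrange-multiplier step and the exclusion of semitrivial minimizers are all deferred to Proposition \ref{soluçao N^+}, which in turn invokes the negativity proved here to rule out a vanishing weak limit. Your second and third paragraphs therefore reprove, in sketch form, what the paper postpones — and you do it in the correct non-circular order (negativity first, then attainment) and correctly isolate the genuine crux, namely membership of the weak limit in $\mathcal{A}$, resolved by the implicit-function-theorem curve $s\mapsto (t(s)u,t(s)sv)$ along which the energy strictly decreases, exactly as the paper does.
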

	\begin{proof}
		Let $\lambda \in (0, \lambda^*)$ be fixed. Now, by using Remark \ref{lema_naru}, we see that $R_e(tu, tv) < R_n(tu, tv)$ holds for each  $t \in (0, t_e(u, v))$ where $(u, v) \in \mathcal{A}$. Furthermore, we observe that $t_n^+ (u, v) < t_n (u, v) < t_e(u, v)$. In particular, $R_e(t_n^+(u, v)(u, v)) < R_n(t_n^+(u, v)(u, v)) = \lambda$. Now, by using Remark \ref{obs rel Re e E}, we obtain that $E_{_\lambda}(t_n^+(u, v)(u, v)) < 0$. Recall also that $t_n^+(u, v)(u, v) \in \mathcal{N}^+_\lambda\cap \mathcal{A}$. Under these conditions, we obtain that $$C_{\mathcal{N}^+_\lambda\cap \mathcal{A}} = \displaystyle \inf_{(z, w) \in \mathcal{N}^+_\lambda \cap \mathcal{A}} E_{_\lambda}(z, w) \le E_{_\lambda}(t_n^+(u, v) (u, v)) < 0.$$ This ends the proof.
	\end{proof}
	
	\begin{prop}\label{soluçao N^+}
		Suppose $(P)$, ($V_0$) and ($V_1$). Assume that $\lambda \in (0, \lambda^*)$. Let $(u_k, v_k)\in \mathcal{N}^+_\lambda \cap \mathcal{A}$ be a minimizer sequence for the energy functional $E_{_\lambda}$ restricted to $\mathcal{N}^+_\lambda\cap \mathcal{A}$. Then there exists $(u, v) \in \mathcal{A}$ such that $(u_k, v_k) \to (u, v)$ in $X$ where $(u, v) \in \mathcal{N}^+_\lambda\cap \mathcal{A}$. Furthermore, $C_{\mathcal{N}^+_\lambda \cap \mathcal{A}}= E_{_\lambda}(u, v)$ and $(u, v)$ is critical point for the functional $E_{_{\lambda}}$. 
	\end{prop}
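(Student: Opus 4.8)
The plan is to run the direct method on $\mathcal{N}^+_\lambda \cap \mathcal{A}$, following the template of Proposition \ref{soluçao N-} but exploiting that here the infimum is strictly negative. First I would fix a minimizing sequence $(u_k,v_k) \in \mathcal{N}^+_\lambda \cap \mathcal{A}$ with $E_{_\lambda}(u_k,v_k) \to C_{\mathcal{N}^+_\lambda \cap \mathcal{A}}$. By Proposition \ref{E coersiva} the functional $E_{_\lambda}$ is coercive on $\mathcal{N}_\lambda$, so $(u_k,v_k)$ is bounded in $X$ and, up to a subsequence, $(u_k,v_k) \rightharpoonup (u,v)$ in $X$. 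Lemma \ref{chave} then yields $\Vert u_k \Vert_p^p \to \Vert u \Vert_p^p$ and $\Vert v_k \Vert_q^q \to \Vert v \Vert_q^q$, while the argument of Proposition \ref{lambda frac Sem Inf} (H\"older together with the Dominated Convergence Theorem) gives $\int_{\mathbb{R}^N} |u_k|^\alpha |v_k|^\beta\,dx \to \int_{\mathbb{R}^N} |u|^\alpha |v|^\beta\,dx$.

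Next I would rule out the trivial weak limit. Writing $E_{_\lambda}$ on $\mathcal{N}_\lambda$ as in Proposition \ref{E coersiva}, if $(u,v)=(0,0)$ then $\Vert u_k\Vert_p^p$, $\Vert v_k\Vert_q^q$ and the coupling integral all tend to $0$, whence $E_{_\lambda}(u_k,v_k) = \left(\tfrac12 - \tfrac{1}{\alpha+\beta}\right)\Vert (u_k,v_k)\Vert^2 + o(1) \ge o(1)$ and therefore $\liminf_k E_{_\lambda}(u_k,v_k) \ge 0$. This contradicts $C_{\mathcal{N}^+_\lambda \cap \mathcal{A}} < 0$ from Proposition \ref{CN+ neg}, so $(u,v) \neq (0,0)$.

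The main obstacle is to prove $(u,v) \in \mathcal{A}$, i.e. $\int_{\mathbb{R}^N}|u|^\alpha|v|^\beta\,dx > 0$, since the energy bounds above do not by themselves preclude a semitrivial weak limit. Here I would argue by contradiction and use the auxiliary curve announced in the introduction: assuming $(u,v) \notin \mathcal{A}$, I would choose a direction $(\phi,\psi)$ along which $(u,v)+s(\phi,\psi)$ enters $\mathcal{A}$ and, using that $\mathcal{N}^0_\lambda = \emptyset$ for $\lambda \in (0,\lambda^*)$ together with the $C^1$ projection $t_n^+$ of Proposition \ref{tn-,tn+}, set $G(s) = t_n^+\big((u,v)+s(\phi,\psi)\big)\,\big((u,v)+s(\phi,\psi)\big) \in \mathcal{N}^+_\lambda \cap \mathcal{A}$ for small $s>0$. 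Since the coupling term contributes at order $s^\beta$ with a favourable (negative) sign in $E_{_\lambda}$, while the Nehari projection correction enters at higher order, an expansion of $s \mapsto E_{_\lambda}(G(s))$ produces a value strictly below $C_{\mathcal{N}^+_\lambda \cap \mathcal{A}}$, contradicting the definition of the infimum. This forces $(u,v) \in \mathcal{A}$.

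Once $(u,v) \in \mathcal{A}$, Proposition \ref{tn-,tn+} provides $t_n^+(u,v)$ with $t_n^+(u,v)(u,v) \in \mathcal{N}^+_\lambda \cap \mathcal{A}$, realizing the minimum of the fiber $\gamma_{_\lambda}(t)=E_{_\lambda}(tu,tv)$ on $(0,t_n^-(u,v))$. A fiber analysis---using $E'_{_\lambda}(u,v)(u,v) \le \liminf_k E'_{_\lambda}(u_k,v_k)(u_k,v_k)=0$ to locate $t=1$ relative to $t_n^+(u,v)$ and $t_n^-(u,v)$---shows $t_n^+(u,v)=1$, so that the chain $C_{\mathcal{N}^+_\lambda \cap \mathcal{A}} \le E_{_\lambda}(t_n^+(u,v)(u,v)) \le E_{_\lambda}(u,v) \le \liminf_k E_{_\lambda}(u_k,v_k) = C_{\mathcal{N}^+_\lambda \cap \mathcal{A}}$ collapses to equalities. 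As the only inequality that can be strict is the weak lower semicontinuity of the norm, I obtain $\Vert(u_k,v_k)\Vert \to \Vert(u,v)\Vert$, hence $(u_k,v_k)\to(u,v)$ in the Hilbert space $X$, and $(u,v) \in \mathcal{N}^+_\lambda \cap \mathcal{A}$ attains $C_{\mathcal{N}^+_\lambda \cap \mathcal{A}}$. Finally, to upgrade this constrained minimizer to a free critical point I would apply the Lagrange Multiplier Theorem exactly as in Proposition \ref{soluçao N-}: with $R(u,v) = E'_{_\lambda}(u,v)(u,v)$ one has $R'(u,v)(u,v) = E''_{_\lambda}(u,v)(u,v)^2 > 0$ on $\mathcal{N}^+_\lambda$, so the multiplier must vanish and $E'_{_\lambda}(u,v)(\phi,\psi)=0$ for all $(\phi,\psi)\in X$. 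I expect the curve step excluding semitrivial limits to be the decisive difficulty, with the fiber bookkeeping for the strong convergence a close second.
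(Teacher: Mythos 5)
Your skeleton coincides with the paper's proof in all but one step: coercivity and boundedness, ruling out the zero weak limit via $C_{\mathcal{N}^+_\lambda\cap\mathcal{A}}<0$, the chain $C_{\mathcal{N}^+_\lambda\cap\mathcal{A}} \le E_{_\lambda}(t_n^+(u,v)(u,v)) \le E_{_\lambda}(u,v) \le \liminf_k E_{_\lambda}(u_k,v_k) = C_{\mathcal{N}^+_\lambda\cap\mathcal{A}}$ forcing $t_n^+(u,v)=1$ and strong convergence, and the Lagrange multiplier argument with $R'(u,v)(u,v)=E''_{_\lambda}(u,v)(u,v)^2>0$ are all exactly as in the paper. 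The genuine gap is in the step you yourself flag as decisive: excluding $\int_{\mathbb{R}^N}|u|^\alpha|v|^\beta\,dx=0$. You claim the favourable term in the expansion of $s\mapsto E_{_\lambda}(G(s))$ is the coupling term, of order $s^\beta$, and that only the projection correction competes with it. Neither half is right. The hypotheses only impose $\beta>1$ (with $\alpha+\beta<2^*_s$), so $\beta\ge 2$ is admissible, and then the quadratic part of the norm already costs order $s^2$, which dominates or matches $s^\beta$; worse, at a mere weak limit $(u,v)$ — which at this stage of your argument is \emph{not} known to be a critical point — the first-order terms $2s\left<(u,v),(\phi,\psi)\right>$ and $-\lambda p\, s\int_{\mathbb{R}^N}|u|^{p-2}u\phi\, dx$ survive with indeterminate sign. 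The Nehari constraint does kill the tangential derivative (your correct observation that the projection correction is higher order), but the transversal derivative $E'_{_\lambda}(G(0))(\phi,\psi)$ has no sign, so no contradiction follows. In the paper the mechanism is entirely different: the decisive term is the \emph{concave} one. Along the curve $(t(s)u,t(s)sv)$ one gets
\begin{equation*}
G'(s)\;\le\; t(s)\left[\,t(s)s\,\Vert v\Vert^2-\lambda\,(t(s)s)^{q-1}\Vert v\Vert_q^q\,\right],
\end{equation*}
and it is $q\in(1,2)$ — not the coupling power — that makes $s^{q-1}$ dominate $s$ and forces $G'(s)<0$ near $0$; the coupling contribution is simply discarded as having a favourable sign.

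There is a second, related omission: $(u,v)\notin\mathcal{A}$ only gives $uv=0$ almost everywhere, so both components may be nonzero with disjoint supports, a case your perturbation scheme never addresses (and where the Gagliardo cross term between $v$ and $\psi$ does not vanish). The paper resolves this by reversing the order of the argument: it first proves strong convergence and criticality (taking $(u,v)\in\mathcal{A}$, justified a posteriori for minimizers), normalizes the minimizing sequence to be nonnegative (Proposition \ref{mini nao negat}), and then uses regularity (Proposition \ref{regular}) plus the strong maximum principle to conclude that a minimizer outside $\mathcal{A}$ must be exactly semitrivial, say $(u,0)$ with $u>0$ solving the scalar equation. Only with this structure is the Implicit Function Theorem curve available, since one needs $F(1,0)=0$ and $\frac{d}{dt}F(1,0)=\lambda(2-p)\Vert u\Vert_p^p>0$ at the base point, both of which use that $(u,0)$ is a critical point on $\mathcal{N}^+_\lambda$. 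As written, your curve step fails whenever $\beta\ge 2$ and is incomplete even for $\beta<2$; to close the proof you should restructure it along the paper's lines: criticality first, then maximum principle to reduce to $(u,0)$, then the concave-term-driven descent $G'(s)<0$.
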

	\begin{proof}
		Assume that $\lambda \in (0, \lambda^*)$. Let $(u_k, v_k)\in \mathcal{N}^+_\lambda \cap \mathcal{A}$ be a minimizer sequence for the energy functional $E_{_\lambda}$ restricted to $\mathcal{N}^+_\lambda\cap \mathcal{A}$. Hence, $(u_k,v_k)$ is a bounded sequence and there exists $(u, v) \in X$ such that $(u_k, v_k)\rightharpoonup (u, v)$ in $X$, see Proposition \ref{E coersiva}. Now, we observe that $(u,v) \neq (0,0)$. This can be done arguing by contradiction. Assume by contradiction that $(u_k, v_k)\rightharpoonup (0, 0)$ in $X$. Under these conditions, by using Lemma \ref{chave} and \eqref{Nehari}, we obtain that $\|(u_k, v_k)\| \to 0$ as $k \to \infty$. Therefore, by using Proposition \ref{CN+ neg}, we obtain that $0 > C_{\mathcal{A}\cap \mathcal{N}^+_\lambda} = \lim_{k \to \infty} E_\lambda(u_k,v_k) = 0$. This is a contradiction proving that $(u,v) \neq (0,0)$.
		
		At this stage, we shall assume that $(u, v) \in \mathcal{A}$. Firstly, we show that $(u_k, v_k) \to (u, v)$ in $X$. Notice that the functionals $(u,v) \mapsto E_\lambda(u,v)$ and $(u,v) \mapsto E_\lambda'(u,v) (u,v)$ are weakly lower semicontinuous. Hence,  we obtain that $$E'_{_{\lambda}}(u, v)(u, v) \le \lim \inf\limits_{k \to \infty} E'_{_{\lambda}}(u_k, v_k)(u_k, v_k) = 0, \,\, E_{_{\lambda}}(u, v) \le \lim \inf\limits_{k \to \infty} E_{_{\lambda}}(u_k, v_k) = C_{\mathcal{N}^+_{\lambda}}.$$
		In particular, we obtain that that $t_n^+(u, v) \ge 1$. Recall also the fact that fibering map 
		$\gamma_{_\lambda}(t) = E_\lambda(tu,tv)$ is decreasing for each $t \in (0, t_n^+(u, v))$. Notice also that $t_n^+(u, v)(u, v) \in \mathcal{N}^+_{\lambda}$. Using the last assertions we see that 
		$$C_{\mathcal{N}^+_{\lambda}} \le E_{_{\lambda}}(t_n^+(u, v)(u, v)) \le E_{_{\lambda}}(u, v) \le \liminf\limits_{k \to \infty} E_{_{\lambda}}(u_k, v_k) = C_{\mathcal{N}^+_{\lambda}}.$$
		As a consequence, we obtain that  $$t_n^+(u, v) = 1, \,\,E_{_{\lambda}}(u, v) = \lim_{k \to \infty} E_{\lambda}(u_k, v_k).$$
		Therefore, we deduce that $(u_k, v_k) \to (u, v)$ in $X$. Here we observe that 
		the functionals $(u,v) \mapsto E_{_{\lambda}}(u,v)$, $(u,v) \mapsto E'_{_{\lambda}}(u,v)(u,v)$ and $(u,v) \mapsto E''_{_{\lambda}}(u,v)(u,v)^2$ are continuous. Hence,
		$(u,v) \in \mathcal{N}^+_{\lambda} \cap \mathcal{A}$ and $E_{_\lambda}(u, v) = C_{\mathcal{N}^+_{\lambda}\cap \mathcal{A}}$.
		
		Now, using the same ideas discussed in the proof of Proposition \ref{soluçao N-}, we can apply the Lagrange Multiplier Theorem which implies that $(u, v)$ is the critical point for the energy functional $E_\lambda$.

		It remains to prove that  the infimum for the energy functional $E_\lambda$ restricted to  $\mathcal{N}^+_{_{\lambda}}$ is never attained by a semitrivial solution of the type $(u,0)$ or $(0,v)$. More generally, we shall prove that any minimizer $(u,v) \in X$ for the functional $E_\lambda$ restricted to the set $\mathcal{A} \cap \mathcal{N}^+_\lambda$ belongs to $\mathcal{A}$. Without loss of generality we assume that $u \geq 0$ and $v \geq 0$ in $\mathbb{R}^N$. Let us assume by contradiction that there exists some minimizer $(u,v) \in X \setminus \mathcal{A}$ for the functional $E_\lambda$ restricted to $\mathcal{A}\cap \mathcal{N}_\lambda^+$. Hence,  $$\int\limits_{\mathbb{R}^N} \vert u \vert^\alpha\vert v\vert^\beta dx = 0$$ which implies that $u v = 0$ almost every in $\mathbb{R}^N$. Recall also that $E'_{_{\lambda}}(u, v)(\phi, \psi) = 0$ holds true for each $(\phi, \psi) \in X$.  In particular, we obtain that 
		\begin{equation}
			E'_{_{\lambda}}(u, v)(\phi, 0) = 0, \,\,E'_{_{\lambda}}(u, v)(0, \psi) = 0 .
		\end{equation}
		Under these conditions,  by using the fact that $u \ge 0$ and $v \ge 0$ in $\mathbb{R}^N$, we obtain that $u$ and $v$ are solutions for the following two scalar problems:
		\begin{equation}
			(-\Delta)^su +V_1(x)u =   \lambda u^{p-1}, (-\Delta)^sv +V_2(x)v =   \lambda v^{q-1} \,\, \mbox{in} \,\, \mathbb{R}^N.
		\end{equation}
		It is important to stress that $u, v \in L^\infty(\mathbb{R}^N)$ is verified, see Proposition \ref{regular} ahead. In particular, we obtain that $u, v \in C^{0,\alpha}(\mathbb{R}^N)$ holds for some $\alpha \in (0,1)$. Now, 	applying the Strong Maximum Principle \cite[Theorem 1.2]{Maxfor}, we infer that 
		\begin{equation}\left\{\begin{array}{ll} 
				u \equiv 0  \ \hbox{or} \ u > 0 \;\; \mbox{in}\;\; \mathbb{R}^N,  \\
				v \equiv 0 \ \hbox{or} \ v > 0 \;\; \mbox{in}\;\; \mathbb{R}^N.
			\end{array}\right.
		\end{equation}
		Notice also that $(u, v) \neq (0, 0)$. Let us assume by contradiction that  $u > 0$ and $v \equiv 0$ in $\mathbb{R}^N$. Consider the function $t \mapsto E_\lambda(tz,tw)$ where $(z,w) \in X$. It is easy to verify that
		\begin{eqnarray*}
			\frac{d}{dt}E_{_{\lambda}}(tz, tw) & = & E'(tz, tw)(tz, tw)  =  t \Vert(z, w) \Vert^2 - \lambda t^{p - 1} \Vert z \Vert_p^p - \lambda t^{q - 1} \Vert w \Vert_q^q - \theta t^{\alpha + \beta - 1}\int\limits_{\mathbb{R}^N} \vert z \vert^\alpha\vert w\vert^\beta dx.
		\end{eqnarray*}
		Now, using the function $sw$ instead of $w$ where $s > 0$, we also obtain 
		\begin{eqnarray*}
			\frac{d}{dt}E_{_{\lambda}}(tz, tsw)  & = & E'(tz, tsw)(z, sw) =  t \Vert(z, sw) \Vert^2 - \lambda t^{p - 1} \Vert z \Vert_p^p - \lambda t^{q - 1} \vert s \vert^q \Vert w \Vert_q^q - \theta t^{\alpha + \beta - 1}\vert s \vert^{\beta}\int\limits_{\mathbb{R}^N} \vert z \vert^\alpha\vert w\vert^\beta dx.
		\end{eqnarray*}
		Recall that $(u, 0) \in \mathcal{N}^+_{_{\lambda}}$ and $C_{\mathcal{N}^+_{_{\lambda}}} = E_{_\lambda}(u, 0)$. As a consequence, $$\Vert u \Vert ^2 = \lambda \Vert u \Vert_p^p,
		E''_{_{\lambda}}(u, 0)(u, 0)^2 > 0$$
		where $E''_{_{\lambda}}(u, 0)(u, 0)^2 = 2\Vert u \Vert^2 - \lambda p \Vert u \Vert_p^p = \lambda( 2 - p) \Vert u \Vert _p^p$. Now, we define the function $F : (0, \infty) \times (-\epsilon, \epsilon) \to \mathbb{R}$ of $C^1$ class given by $F(t, s) = \frac{d}{dt}E_{_{\lambda}}(tz, tsw)$ for each $(z, w) \in X$ fixed. It is not hard to verify that 
		\begin{eqnarray}\label{equação de Nehari -}
			\frac{d}{dt}F(t, s) & = & \Vert (z, sw) \Vert ^2 - \lambda(p - 1) t^{p- 2}\Vert z \Vert_p^p - \lambda (q - 1) t^{q-2}\vert s \vert^q \Vert w\Vert _q^q \nonumber \\
			& -&  \theta(\alpha + \beta - 1) t^{\alpha + \beta - 2} \vert s \vert^\beta\int\limits_{\mathbb{R}^N} \vert z \vert^\alpha\vert w \vert^\beta dx.
		\end{eqnarray}
		Furthermore, we observe that 
		\begin{equation}
			F(1, 0) = 0, \,\,
			\frac{d}{dt}F(1, 0) = \Vert u \Vert^p - \lambda(p - 1) \Vert u \Vert_p^p = \lambda(2 - p) \Vert u \Vert_p^p > 0, (z,w) = (u,0).
		\end{equation}
		Now, by using the Implicit Function Theorem \cite{DRABEK}, there exists $\delta >0$ and a unique function $t : (-\delta, \delta) \to \mathbb{R}$ such that 
		\begin{equation}
			\frac{d}{dt}E_{_{\lambda}}(t(s)u, t(s)sv) = F(t(s), s) = 0, \,\, - \delta < s < \delta.
		\end{equation}
		In other words, we obtain that $(t(s) u, t(s)sv) \in \mathcal{N}_{_{\lambda}}$ holds for each $s \in (-\delta, \delta)$. Furthermore, assuming that $\delta > 0$ is small enough, we can use the fact that $s \mapsto \frac{d}{dt}F(t(s), s) $ is continuous proving that  that $\frac{d}{dt}F(t(s), s) > 0$ holds for each $s \in (-\delta, \delta)$. 
		Therefore, we infer that $(t(s) u, t(s)sv) \in \mathcal{N}^+_{_{\lambda}}$ holds for each  $s \in (-\delta, \delta)$. Furthermore, we mention that $t(s) \to 1$ 
		as $s \to 0$. Now, we define the auxiliary function  $G : (-\delta, \delta) \to \mathbb{R}$ of $C^1$ class given by $G(s) = E_{_{\lambda}}(t(s) u, t(s) s v), s \in (-\delta, \delta)$. 
		In particular, we know that $G(0) = E_{_{\lambda}}(u, 0) = C_{\mathcal{N}^+_{\lambda}}$. Moreover, we observe that  
		\begin{eqnarray*}
			G'(s)  & = & E'_{_{\lambda}}(t(s)u, t(s)sv)(t'(s)u, [ t'(s)s + t(s)] v)\\
			& = & \frac{t'(s)}{t(s)}E'_{_{\lambda}}(t(s)u, t(s)sv)(t(s)u, t(s)sv) + t(s) E'_{_{\lambda}}(t(s)u, t(s)sv)(0, v), s \in (-\delta, \delta).
		\end{eqnarray*}
		Notice also that $E'_{_{\lambda}}(t(s)u, t(s)sv)(t(s)u, t(s)sv) = 0$ holds for each  $s \in (-\delta, \delta)$. 
		Hence, we obtain the following identity
		\begin{eqnarray*}
			E'_{_{\lambda}}(t(s)u,t(s)s v)(0, v)& = &\left<(t(s)u,t(s)s v),(0, v)\right>  - \lambda \int\limits_{\mathbb{R}^N} \vert t(s)sv \vert^{q-2}t(s)svv dx  \\  &-& \frac{\theta}{\alpha + \beta}\beta \int_{\mathbb{R}^N} \vert t(s)u\vert^{\alpha} \vert t(s)sv\vert^{\beta-2}t(s)sv v dx.
		\end{eqnarray*}
		In particular, we obtain that
		\begin{eqnarray*}
			E'_{_{\lambda}}(t(s)u,t(s)s v)(0, v)
			& \leq &  t(s)s \Vert v \Vert^2  - \lambda (t(s)s)^{q - 1} \Vert v \Vert^q_q .
		\end{eqnarray*}
		As a consequence, we infer that 
		\begin{eqnarray*}
			G'(s)  & = & t(s) E'_{_{\lambda}}(t(s)u, t(s)sv)(0, v) \le t(s)\left (t(s)s \Vert v \Vert^2  - \lambda (t(s)s)^{q - 1} \Vert v \Vert^q_q\right) \nonumber \\
			& =&  t(s) (t(s)s)^{q - 1}\left [
			(t(s)s)^{2 - q} \Vert v \Vert^2 - \lambda \Vert v \Vert _q^q \right ].
		\end{eqnarray*}
		Since $q \in (1, 2)$ we obtain that $G'(s) < 0$ holds true for each $s \in (0, \delta)$ where $\delta > 0$ is small enough. Here was used the fact that $t(s) \to 1$ as $s \to 0$. Therefore, $s \mapsto G(s)$ is decreasing in the interval $[0, \delta)$. The last statement implies also that $G(s) < G(0) = C_{\mathcal{N}^+_{\lambda}}$. Hence, 
		we know that $(t(s)u, t(s)sv) \in \mathcal{N}^+_{\lambda}$ where $E_{_{\lambda}}(t(s)u, t(s)sv) < C_{\mathcal{N}^+_{\lambda}}$. This lead us to a contradiction proving that $(u,0)$ is not the infimum for the functional $E_\lambda$ restricted to the set $\mathcal{N}^+_\lambda$. Analogously, we infer that the function $(0, v)$ is not the infimum for the energy functional $E_\lambda$ restricted to $\mathcal{N}^+_\lambda$.  
		Therefore,  we see that $(u, v) \in \mathcal{A}$ is now satisfied for any minimizer for the functional $E_\lambda$ restricted to $\mathcal{A}\cap \mathcal{N}_\lambda^+$.  This ends the proof.
	\end{proof}
	
	\begin{prop}\label{regular}
		Suppose $(P)$, ($V_0$) and ($V_1$). Then any weak solution $(u, v) \in \mathcal{A}$ for the System (\ref{sistema Principal}) belongs to $C^{0, \alpha}(\mathbb{R}^N) \times C^{0, \alpha}(\mathbb{R}^N)$ for some $\alpha \in (0, 1).$
	\end{prop}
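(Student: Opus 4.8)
The plan is to argue in two stages: first I would show that every weak solution $(u,v)\in\mathcal{A}$ belongs to $L^{\infty}(\mathbb{R}^N)\times L^{\infty}(\mathbb{R}^N)$, and then I would bootstrap this boundedness into H\"older continuity by invoking the regularity theory for the fractional Laplacian with a right-hand side that is bounded on compact sets. The decisive structural feature throughout is the subcriticality of all nonlinear exponents, namely $1\le p\le q<2$ and $2<\alpha+\beta<2^*_s$, which guarantees a strictly positive gain of integrability at each step of the iteration.

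For the $L^{\infty}$ estimate I would run a Moser iteration on each equation. Writing the first equation of \eqref{sistema Principal} in weak form and testing with the truncated power $\varphi=u\,u_M^{2(\gamma-1)}$, where $\gamma\ge 1$, $M>0$ and $u_M=\min\{|u|,M\}$, one uses the elementary convexity inequality
$$(a-b)\left(a|a|^{2\gamma-2}-b|b|^{2\gamma-2}\right)\ge \frac{2\gamma-1}{\gamma^2}\left(a|a|^{\gamma-1}-b|b|^{\gamma-1}\right)^2,$$
to bound the fractional bilinear form from below by a constant times $[\,|u|\,u_M^{\gamma-1}]^2$; combined with $V_1\ge V_0>0$ this controls $\Vert\,|u|\,u_M^{\gamma-1}\Vert^2$ and hence, by the fractional Sobolev embedding of Lemma \ref{chave}, the quantity $\Vert\,|u|\,u_M^{\gamma-1}\Vert_{2^*_s}^2$. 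On the right-hand side the concave term contributes harmlessly since $p<2$, while the coupling term $|u|^{\alpha-1}|v|^{\beta}$ is estimated by H\"older's inequality together with $u,v\in L^{2^*_s}(\mathbb{R}^N)$ and the subcriticality $\alpha+\beta<2^*_s$. Letting $M\to\infty$ and choosing exponents $\gamma_k$ with $2^*_s\gamma_k=2\gamma_{k+1}$ yields a recursion, schematically of the form $\Vert u\Vert_{2^*_s\gamma_{k+1}}\le C^{1/\gamma_k}\Vert u\Vert_{2^*_s\gamma_k}$, whose iteration gives $u\in L^{\infty}(\mathbb{R}^N)$; the symmetric argument applied to the second equation yields $v\in L^{\infty}(\mathbb{R}^N)$.

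With $u,v\in L^{\infty}(\mathbb{R}^N)$ at hand, I would rewrite each equation as $(-\Delta)^s u=h_1$ and $(-\Delta)^s v=h_2$, where $h_1=-V_1u+\lambda|u|^{p-2}u+\frac{\theta\alpha}{\alpha+\beta}|u|^{\alpha-2}u|v|^{\beta}$ and $h_2$ is defined analogously. Since $u,v$ are bounded and $V_1,V_2$ are continuous, the functions $h_1,h_2$ are bounded on every compact set, so the interior regularity theory for the fractional Laplacian (H\"older estimates for $(-\Delta)^s w=h$ with $h\in L^{\infty}_{\mathrm{loc}}$) gives $u,v\in C^{0,\alpha}_{\mathrm{loc}}(\mathbb{R}^N)$ for some $\alpha\in(0,1)$. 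A covering argument, together with the decay at infinity furnished by $u,v\in H^s(\mathbb{R}^N)\cap L^{\infty}(\mathbb{R}^N)$, then upgrades this to the claimed global regularity $(u,v)\in C^{0,\alpha}(\mathbb{R}^N)\times C^{0,\alpha}(\mathbb{R}^N)$.

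The main obstacle is the first stage: making the Moser iteration rigorous in the nonlocal setting requires care with the truncation $u_M$ so that the test function is admissible in $X_1$ and the convexity inequality can be applied inside the double integral, and the coupling term forces one to track the $L^{2^*_s}$-norms of both components simultaneously while verifying that the chosen exponents keep the iteration closed. Once the uniform bound is secured, the passage to H\"older continuity is comparatively standard.
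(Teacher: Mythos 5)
Your two-stage scheme (Moser iteration for $L^{\infty}$, then H\"older estimates for the fractional Laplacian) is exactly the route the paper delegates to its references: the paper's own ``proof'' is a one-line citation of \cite[Proposition 4.1]{Regularizado} and \cite[Chapter 5]{DipierroRegul}, and where the regularity is actually used (Proposition \ref{sol positivas}) it invokes \cite[Lemma 2.8]{Vambrozio2020} and \cite[Corollary 5.5]{AIalMoscSqua2016}, which implement precisely this plan. Your first stage is sound as a program: the truncation inequality you quote is the standard one for nonlocal Moser iteration, the concave term is harmless for all $1\le p<2$ (even at $p=1$, where $|u|^{p-2}u$ is merely bounded), and the coupling term can either be tracked jointly, as you propose, or decoupled at the outset via Young's inequality, $|u|^{\alpha-1}|v|^{\beta}\le C\left(|u|^{\alpha+\beta-1}+|v|^{\alpha+\beta-1}\right)$, which reduces each equation to one with subcritical right-hand side and makes the iteration closure you worry about routine.

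The genuine gap is in your final globalization step. Hypothesis $(V_1)$, namely $1/V_j\in L^1(\mathbb{R}^N)$, forces $V_j$ to be unbounded (a continuous potential bounded above would have $1/V_j$ bounded below, hence non-integrable; the model example is $V_j(x)=(1+x^2)^{\gamma_j}$ with $\gamma_j>N/2$). Consequently $h_1=-V_1u+\lambda|u|^{p-2}u+\frac{\theta\alpha}{\alpha+\beta}|u|^{\alpha-2}u|v|^{\beta}$ is only \emph{locally} bounded, and the interior H\"older estimate on $B_1(x_0)$ carries a constant depending on $\Vert h_1\Vert_{L^{\infty}(B_2(x_0))}$, which a priori blows up as $|x_0|\to\infty$; a covering argument therefore does not yield a uniform H\"older seminorm. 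Worse, the ingredient you invoke to repair this --- ``decay at infinity furnished by $u,v\in H^s(\mathbb{R}^N)\cap L^{\infty}(\mathbb{R}^N)$'' --- is not a fact: membership in $H^s\cap L^{\infty}$ gives no pointwise decay at all, and even granting continuity it gives no rate. To obtain $C^{0,\alpha}(\mathbb{R}^N)$ as stated you need a quantitative decay estimate strong enough to keep $V_ju$ and $V_jv$ globally bounded (for confining fractional Schr\"odinger operators the expected behavior is $|u(x)|\lesssim (1+|x|)^{-(N+2s)}V_1(x)^{-1}$, typically proved by a barrier/comparison argument), and this must be established, not inferred from energy-space membership. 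If instead one only needs $C^{0,\alpha}_{\mathrm{loc}}$ --- which is all the strong maximum principle in Proposition \ref{sol positivas} requires --- then your argument through the interior estimates suffices, but it does not deliver the global statement as written.
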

	\begin{proof}
		The proof follows the same ideas discussed in \cite[Proposition 4.1]{Regularizado} and \cite[Chapter 5]{DipierroRegul}. We will omit the details.
	\end{proof} 
	\begin{prop}
		Suppose $(P)$, ($V_0$) and ($V_1$). Let $(u_k, v_k)$ be a minimizer sequence for the energy functional $E_\lambda$ restricted to $\mathcal{N}^-_\lambda$. Then there exists a non-negative minimizer sequence for the energy functional $E_\lambda$ restricted to $\mathcal{N}^-_\lambda$.
	\end{prop}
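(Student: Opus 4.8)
The plan is to symmetrize the given sequence by passing to absolute values and then reprojecting onto $\mathcal{N}^-_\lambda$ along the fibering direction. Concretely, I would set $\hat{u}_k = |u_k|$ and $\hat{v}_k = |v_k|$ and observe that the building blocks of the energy transform very favourably: the concave norms and the coupling integral are invariant, namely $\Vert \hat{u}_k \Vert_p^p = \Vert u_k \Vert_p^p$, $\Vert \hat{v}_k \Vert_q^q = \Vert v_k \Vert_q^q$ and $\int_{\mathbb{R}^N} |\hat{u}_k|^\alpha |\hat{v}_k|^\beta dx = \int_{\mathbb{R}^N} |u_k|^\alpha |v_k|^\beta dx$, while the quadratic part can only decrease, since the elementary inequality $\big||w(x)| - |w(y)|\big| \le |w(x)-w(y)|$ yields $[\,|w|\,] \le [w]$ for the Gagliardo seminorm and the potential terms are unchanged. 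Hence $\Delta_k := \Vert (u_k, v_k) \Vert^2 - \Vert (\hat{u}_k, \hat{v}_k) \Vert^2 \ge 0$, and for every $t > 0$ one has the pointwise identity $E_\lambda(t\hat{u}_k, t\hat{v}_k) = E_\lambda(tu_k, tv_k) - \tfrac{t^2}{2}\Delta_k$. Moreover, since $(u_k, v_k) \in \mathcal{N}^-_\lambda \subset \mathcal{A}$ by Remark \ref{N- cont A} (so $u_k \neq 0$ and $v_k \neq 0$), the invariance of the coupling integral gives $(\hat{u}_k, \hat{v}_k) \in \mathcal{A}$.

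Next I would project. Because $\lambda \in (0, \lambda^*)$ and $(\hat{u}_k, \hat{v}_k) \in \mathcal{A}$, Proposition \ref{tn-,tn+} produces a unique $t_k^- := t_n^-(\hat{u}_k, \hat{v}_k) > 0$ with $(t_k^-\hat{u}_k, t_k^-\hat{v}_k) \in \mathcal{N}^-_\lambda$; this non-negative sequence is my candidate. It lies in $\mathcal{N}^-_\lambda$ by construction, so $E_\lambda(t_k^-\hat{u}_k, t_k^-\hat{v}_k) \ge C_{\mathcal{N}^-_\lambda}$, and the whole argument reduces to the reverse energy estimate $E_\lambda(t_k^-\hat{u}_k, t_k^-\hat{v}_k) \le E_\lambda(u_k, v_k)$.

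To prove this, write $\gamma_k(t) = E_\lambda(tu_k, tv_k)$ and $\tilde\gamma_k(t) = E_\lambda(t\hat{u}_k, t\hat{v}_k) = \gamma_k(t) - \tfrac{t^2}{2}\Delta_k$, so that $\tilde\gamma_k'(t) = \gamma_k'(t) - t\Delta_k$. Since $(u_k, v_k) \in \mathcal{N}^-_\lambda$, the fibering map $\gamma_k$ has its local maximum at $t = 1$, that is $t_n^-(u_k, v_k) = 1$; by Proposition \ref{tn-,tn+}, $\gamma_k$ has a local minimum at $a_k := t_n^+(u_k, v_k) < 1$, is increasing on $[a_k, 1]$, and satisfies $\gamma_k' < 0$ on $(0, a_k) \cup (1, \infty)$ with $\gamma_k'(1) = 0$. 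The derivative comparison then pins down $t_k^-$: on $(0, a_k)$ both $\gamma_k'(t) < 0$ and $-t\Delta_k \le 0$, so $\tilde\gamma_k' < 0$ there and hence $t_k^- > a_k$; on $[1, \infty)$ one has $\gamma_k'(t) \le 0$ together with $-t\Delta_k \le 0$, giving $\tilde\gamma_k'(t) < 0$ for $t > 1$ and $\tilde\gamma_k'(1) = -\Delta_k \le 0$, so $\tilde\gamma_k$ is non-increasing on $[1, \infty)$ and its local maximum $t_k^-$ must satisfy $t_k^- \le 1$. Therefore $a_k < t_k^- \le 1$, i.e. $t_k^-$ lies in the region where $\gamma_k$ is increasing, whence $\gamma_k(t_k^-) \le \gamma_k(1)$ and
\begin{equation*}
E_\lambda(t_k^-\hat{u}_k, t_k^-\hat{v}_k) = \tilde\gamma_k(t_k^-) \le \gamma_k(t_k^-) \le \gamma_k(1) = E_\lambda(u_k, v_k).
\end{equation*}
Letting $k \to \infty$ gives $C_{\mathcal{N}^-_\lambda} \le E_\lambda(t_k^-\hat{u}_k, t_k^-\hat{v}_k) \le E_\lambda(u_k, v_k) \to C_{\mathcal{N}^-_\lambda}$, so $(t_k^-\hat{u}_k, t_k^-\hat{v}_k)$ is a non-negative minimizing sequence for $E_\lambda$ on $\mathcal{N}^-_\lambda$, as claimed.

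The step I expect to be the main obstacle is precisely the localization $a_k < t_k^- \le 1$ of the reprojection parameter relative to the critical points of the original fibering map; everything else is bookkeeping. I would stress that this is settled purely at the level of the first derivative via $\tilde\gamma_k' = \gamma_k' - t\Delta_k$ and the sign $\Delta_k \ge 0$, so that no delicate second-order or inflection-point analysis is needed. The only analytic inputs beyond the quoted propositions are the pointwise inequality $\big||w(x)| - |w(y)|\big| \le |w(x)-w(y)|$ guaranteeing $[\,|w|\,] \le [w]$, and the compact embedding of Lemma \ref{chave} already used throughout; in particular, the same $(t_k^-\hat{u}_k, t_k^-\hat{v}_k)$ can be fed into Proposition \ref{soluçao N-} to upgrade the minimizer itself to a non-negative one.
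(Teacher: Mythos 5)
Your proof is correct and takes essentially the same route as the paper: symmetrize to $(\vert u_k\vert , \vert v_k\vert)$, reproject with $t_k^- = t_n^-(\vert u_k\vert, \vert v_k\vert)$, and use $[\,\vert w\vert\,]\le [w]$ together with the monotonicity of $t \mapsto E_\lambda(t u_k, t v_k)$ on $[t_n^+(u_k,v_k), 1]$ to conclude $E_\lambda(t_k^-\vert u_k\vert, t_k^-\vert v_k\vert)\le E_\lambda(u_k,v_k)= C_{\mathcal{N}^-_\lambda}+o_k(1)$. Your first-derivative comparison $\tilde\gamma_k'(t)=\gamma_k'(t)-t\Delta_k$ with $\Delta_k\ge 0$ is simply a more explicit rendering of the paper's inequality $E'_\lambda\bigl(t(\vert u_k\vert,\vert v_k\vert)\bigr)\bigl(t(\vert u_k\vert,\vert v_k\vert)\bigr)\le E'_\lambda\bigl(t(u_k,v_k)\bigr)\bigl(t(u_k,v_k)\bigr)$, which the paper evaluates at $t=t_k^-$ to locate $t_k^-$ in the interval $[t_n^+(u_k,v_k), t_n^-(u_k,v_k)]=[t_n^+(u_k,v_k),1]$.
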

	\begin{proof}
		Let $(u_k, v_k)$ be a minimizer sequence for the energy functional $E_\lambda$ restricted to  $\mathcal{N}^-_\lambda$. In other words, we have that $E_{_\lambda}(u_k, v_k) = C_{\mathcal{N}^-_{\lambda}} + o_k(1)$. It is not hard to verify that $(\vert u_k \vert , \vert v_k \vert) \in \mathcal{A}$. Here was used the fact that  $$\int\limits_{\mathbb{R}^N} \vert \vert u_k \vert \vert^\alpha\vert \vert v_k \vert \vert^\beta dx  = \int\limits_{\mathbb{R}^N} \vert u_k \vert^\alpha\vert v_k\vert^\beta dx > 0.$$
		Now, by using Proposition \ref{tn-,tn+}, there exists $t_n^-(\vert u_k \vert, \vert v_k \vert)$ such that $t_n^-(\vert u_k \vert, \vert v_k \vert)(\vert u_k \vert, \vert v_k \vert) \in \mathcal{N}^-_\lambda$. It is important to mention that $[\vert u_k \vert] \le [u_k]$ and $[\vert v_k \vert] \le [v_k]$. As a consequence, $$E'_{_\lambda}(t(\vert u_k \vert, \vert v_k \vert))(t(\vert u_k \vert, \vert v_k \vert )) \le E'_{_\lambda}(t(u_k, v_k))(t(u_k, v_k))$$ 
		holds for each $t > 0$. In particular, for $t = t_n^-(\vert u_k \vert, \vert v_k \vert )$, we deduce that 
		$$E'_{_\lambda}(t_n^- (\vert u_k \vert, \vert v_k \vert)(u_k, v_k))(t_n^- (\vert u_k \vert, \vert v_k \vert)(u_k, v_k)) \ge 0.$$
		Here we recall also that $t \mapsto E_{_\lambda}(tu_k, tv_k)$ is an increasing function in the interval $[t_n^+(u_k, v_k),t_n^-(u_k, v_k)]$.  As a consequence, $t_n^- (\vert u_k \vert, \vert v_k \vert) \in [t_n^+(u_k, v_k),t_n^-(u_k, v_k)]$. In particular, we obtain that \begin{eqnarray}
			C_{\mathcal{N}^-_\lambda} &\le& E_{_\lambda}(t_n^-(\vert u_k \vert, \vert v_k \vert)(\vert u_k\vert,\vert v_k \vert)) \le E_{_\lambda}(t_n^-(\vert u_k \vert, \vert v_k \vert)( u_k, v_k)) \nonumber \\
			&\le& \max\limits_{t \ge t_n^+(u_k, v_k)}E_{_\lambda}(t u_k, t v_k) = E_{_\lambda}(u_k, v_k) = C_{\mathcal{N}^-_\lambda} + o_k(1). 
		\end{eqnarray}
		Hence, $E_{_\lambda}(t_n^-(\vert u_k \vert, \vert v_k \vert)(\vert u_k\vert,\vert v_k \vert) = C_{\mathcal{N}^-_\lambda} + o_k(1)$. As a consequence, $(t_n^-(\vert u_k \vert, \vert v_k \vert)(\vert u_k\vert,\vert v_k \vert))$ is a non-negative minimizer sequence for the energy functional $E_\lambda$ restricted to $\mathcal{N}^-_\lambda$. This finishes the proof.     
	\end{proof}
	\begin{prop}\label{mini nao negat}
		Suppose $(P)$, ($V_0$) and ($V_1$). Let $(u_k, v_k)$ a minimizer sequence for the energy functional $E_\lambda$ restricted to $\mathcal{N}^+_\lambda \cap \mathcal{A}$. Then there exists a non-negative minimizer sequence for the energy functional $E_\lambda$ restricted to $\mathcal{N}^+_\lambda \cap \mathcal{A}$.
	\end{prop}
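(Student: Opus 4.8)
The plan is to mirror the argument of the preceding proposition, replacing the local maximum $t_n^-$ by the local minimum $t_n^+$ and using the opposite monotone branch of the fibering map. First I would fix a minimizing sequence $(u_k, v_k) \in \mathcal{N}^+_\lambda \cap \mathcal{A}$, so that $E_{_\lambda}(u_k, v_k) = C_{\mathcal{N}^+_\lambda \cap \mathcal{A}} + o_k(1)$, and pass to $(\vert u_k\vert, \vert v_k\vert)$. Since $\int_{\mathbb{R}^N}\vert\vert u_k\vert\vert^{\alpha}\vert\vert v_k\vert\vert^{\beta}dx = \int_{\mathbb{R}^N}\vert u_k\vert^{\alpha}\vert v_k\vert^{\beta}dx > 0$, the pair $(\vert u_k\vert, \vert v_k\vert)$ again lies in $\mathcal{A}$, so by Proposition \ref{tn-,tn+} the projection $t_k^+ := t_n^+(\vert u_k\vert, \vert v_k\vert)$ is well defined and $(t_k^+\vert u_k\vert, t_k^+\vert v_k\vert) \in \mathcal{N}^+_\lambda \cap \mathcal{A}$; this is my candidate non-negative minimizing sequence.

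The monotonicity input I would use is the pointwise seminorm inequality $[\vert u_k\vert] \le [u_k]$ and $[\vert v_k\vert] \le [v_k]$, which, together with the invariance of the potential terms, of $\Vert u_k\Vert_p^p$, $\Vert v_k\Vert_q^q$, and of the coupling integral under $w \mapsto \vert w\vert$, yields $\Vert(\vert u_k\vert, \vert v_k\vert)\Vert \le \Vert(u_k, v_k)\Vert$. Because only the quadratic term $\tfrac{t^2}{2}\Vert\cdot\Vert^2$ carries the norm, this gives for every $t > 0$ both $E_{_\lambda}(t\vert u_k\vert, t\vert v_k\vert) \le E_{_\lambda}(tu_k, tv_k)$ and the derivative inequality $E'_{_\lambda}(t(\vert u_k\vert, \vert v_k\vert))(t(\vert u_k\vert, \vert v_k\vert)) \le E'_{_\lambda}(t(u_k, v_k))(t(u_k, v_k))$.

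The delicate point, and where I expect the main obstacle, is to locate $t = 1$ relative to the projection so that the minimum value can be compared. Evaluating the derivative inequality at $t = t_n^-(\vert u_k\vert, \vert v_k\vert)$, where the left-hand side vanishes, forces $E'_{_\lambda}(t(u_k, v_k))(t(u_k, v_k)) \ge 0$ at that point. For the original pair the map $t \mapsto E'_{_\lambda}(t(u_k, v_k))(t(u_k, v_k))$ has the sign of the fibering derivative $\gamma_{_\lambda}'$, hence is nonnegative exactly on the closed interval $[t_n^+(u_k, v_k), t_n^-(u_k, v_k)] = [1, t_n^-(u_k, v_k)]$, where I used $t_n^+(u_k, v_k) = 1$ since $(u_k, v_k) \in \mathcal{N}^+_\lambda$. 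Therefore $t_n^-(\vert u_k\vert, \vert v_k\vert) \ge 1$, so $t = 1$ belongs to $(0, t_n^-(\vert u_k\vert, \vert v_k\vert)]$, which is precisely the range on which $\gamma_{_\lambda}^{\vert u_k\vert}(t) := E_{_\lambda}(t\vert u_k\vert, t\vert v_k\vert)$ attains its global minimum at $t_k^+$.

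Putting these together, since $t_k^+$ minimizes $\gamma_{_\lambda}^{\vert u_k\vert}$ on $(0, t_n^-(\vert u_k\vert, \vert v_k\vert)]$ and $1$ lies in that interval, I obtain $E_{_\lambda}(t_k^+\vert u_k\vert, t_k^+\vert v_k\vert) \le E_{_\lambda}(\vert u_k\vert, \vert v_k\vert) \le E_{_\lambda}(u_k, v_k) = C_{\mathcal{N}^+_\lambda \cap \mathcal{A}} + o_k(1)$. On the other hand, $(t_k^+\vert u_k\vert, t_k^+\vert v_k\vert) \in \mathcal{N}^+_\lambda \cap \mathcal{A}$ furnishes the reverse bound $E_{_\lambda}(t_k^+\vert u_k\vert, t_k^+\vert v_k\vert) \ge C_{\mathcal{N}^+_\lambda \cap \mathcal{A}}$. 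Hence $(t_k^+\vert u_k\vert, t_k^+\vert v_k\vert)$ is a non-negative minimizing sequence for $E_{_\lambda}$ restricted to $\mathcal{N}^+_\lambda \cap \mathcal{A}$, which is the desired conclusion.
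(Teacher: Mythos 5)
Your proposal is correct and follows essentially the same route as the paper: symmetrize to $(\vert u_k\vert,\vert v_k\vert)$, project via $t_n^+(\vert u_k\vert,\vert v_k\vert)$, use the Gagliardo-seminorm inequality $[\vert u\vert]\le[u]$ (with all other terms invariant) to compare fibering maps, and squeeze $C_{\mathcal{N}^+_\lambda\cap\mathcal{A}} \le E_{_\lambda}(t_k^+\vert u_k\vert, t_k^+\vert v_k\vert) \le E_{_\lambda}(\vert u_k\vert,\vert v_k\vert) \le E_{_\lambda}(u_k,v_k)$. The only (harmless) variation is the intermediate step: the paper uses the derivative inequality on $(0,1)$ to get the stronger fact $t_n^+(\vert u_k\vert,\vert v_k\vert)\ge 1$ and then monotone decrease of the fibering map up to $t_n^+(\vert u_k\vert,\vert v_k\vert)$, while you evaluate at $t_n^-(\vert u_k\vert,\vert v_k\vert)$ to get only $t_n^-(\vert u_k\vert,\vert v_k\vert)\ge 1$ and invoke that $t_k^+$ is the global minimum of the fibering map on $(0, t_n^-(\vert u_k\vert,\vert v_k\vert)]$ --- both suffice for the same comparison chain.
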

	\begin{proof}
		Let $(u_k, v_k)$ be a minimizer sequence for the energy functional $E_\lambda$ restricted to  $\mathcal{N}^+_\lambda\cap \mathcal{A}$. It is important to emphasize that $t_n^+(u_k, v_k) = 1$ holds for each $k \in \mathbb{N}$. Once again, we consider the sequence $(\vert u_k \vert, \vert v_k\vert)$. Now, by using Proposition \ref{tn-,tn+}, there exists $t_n^+(\vert u_k \vert, \vert v_k \vert)$ such that $t_n^+(\vert u_k \vert, \vert v_k \vert)(\vert u_k \vert, \vert v_k \vert) \in \mathcal{N}^+_\lambda$. Recall also that $\mathcal{A}$ is an open cone set. As a consequence, $t_n^+(\vert u_k \vert, \vert v_k \vert)(\vert u_k \vert, \vert v_k \vert) \in \mathcal{A}$. Therefore, we obtain that $t_n^+(\vert u_k \vert, \vert v_k \vert)(\vert u_k \vert, \vert v_k \vert) \in \mathcal{N}^+_\lambda \cap \mathcal{A}$. Now we claim that $C_{\mathcal{N}^+_\lambda \cap \mathcal{A}} = E_{_\lambda}(t_n^+(\vert u_k \vert, \vert v_k \vert)(\vert u_k \vert, \vert v_k \vert))  + o_k(1)$. The proof for this claim follows using some fine estimates. Firstly, we observe that 
		$$E'_{_\lambda}(t\vert u_k \vert, t\vert v_k \vert)(t\vert u_k \vert, t\vert v_k \vert) \le E'_{_\lambda}(t u_k, t v_k)(t u_k , t v_k ), \,\, t > 0.$$
		In particular, for each $t \in (0, 1)$, we obtain also that 
		$$E'_{_\lambda}(t\vert u_k \vert, t\vert v_k \vert)(t\vert u_k \vert, t\vert v_k \vert) \le 0.$$
		The last assertion implies that $t_n^+(\vert u_k \vert, \vert v_k \vert) \ge t_n^+(u_k, v_k) = 1$. It is important to stress that $t \mapsto E_{_\lambda}(t \vert u_k \vert, t \vert v_k \vert)$ is decreasing for $t \in (0, t_n^+(\vert u_k \vert, \vert v_k \vert)$. Under these conditions, we infer that 
		$$C_{\mathcal{N}^+_\lambda \cap \mathcal{A}} \leq E_{_\lambda}(t_n^+(\vert u_k \vert, \vert v_k \vert) (|u_k|, |v_k|)) \le E_{_\lambda}(\vert u_k \vert, \vert v_k \vert ) \le E_{_\lambda}( u_k, v_k) = C_{\mathcal{N}^+_\lambda \cap \mathcal{A}} + o_k(1).$$
		Hence, $C_{\mathcal{N}^+_\lambda \cap \mathcal{A}} = \lim\limits_{k \to \infty}E_{_\lambda}(t_n^+(\vert u_k \vert, \vert v_k \vert)(\vert u_k \vert, \vert v_k \vert))$ proving the claim. This ends the proof.
	\end{proof}
	
	\begin{prop}\label{sol positivas}
		Suppose $(P)$, ($V_0$) and ($V_1$). Assume that $\lambda \in (0, \lambda^*)$. Then the System (\ref{sistema Principal}) admits at least two positive weak solutions $(u, v)$ and $(z, w)$. 
	\end{prop}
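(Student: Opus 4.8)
The plan is to assemble the two required solutions from the two pieces of the Nehari manifold already analyzed, and then to upgrade them from non-negative to strictly positive functions by means of regularity together with a strong maximum principle. First I would invoke Proposition~\ref{soluçao N^+}: for $\lambda \in (0,\lambda^*)$ the infimum $C_{\mathcal{N}^+_\lambda \cap \mathcal{A}}$ is attained at some $(u,v) \in \mathcal{N}^+_\lambda \cap \mathcal{A}$, and the Lagrange multiplier argument carried out there shows that $(u,v)$ is a genuine critical point of $E_\lambda$, hence a weak solution of \eqref{sistema Principal}. Since the minimizer lies in $\mathcal{A}$, we have $\int_{\mathbb{R}^N}|u|^\alpha|v|^\beta\,dx > 0$, so neither component vanishes identically and $(u,v)$ is not semitrivial. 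Moreover, by Proposition~\ref{mini nao negat} the minimizing sequence may be taken non-negative, and since the convergence is strong in $X$ (again by Proposition~\ref{soluçao N^+}) I may assume $u \ge 0$ and $v \ge 0$ in $\mathbb{R}^N$.

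Next I would produce the second solution from $\mathcal{N}^-_\lambda$. By Proposition~\ref{soluçao N-}, for $\lambda\in(0,\lambda^*)$ the level $C_{\mathcal{N}^-_\lambda}$ is attained at some $(z,w)\in \mathcal{N}^-_\lambda$ which is again a weak solution of \eqref{sistema Principal}. Remark~\ref{N- cont A} gives $\mathcal{N}^-_\lambda \subset \mathcal{A}$, so $(z,w)\in\mathcal{A}$ and is therefore not semitrivial as well; the preceding construction of a non-negative minimizing sequence for $\mathcal{N}^-_\lambda$ lets me assume $z\ge 0$ and $w\ge 0$. The two solutions are automatically distinct, because $\mathcal{N}^+_\lambda$ and $\mathcal{N}^-_\lambda$ are disjoint (they are separated by the sign of $E''_\lambda(\cdot)(\cdot)^2$), so one cannot have $(u,v)=(z,w)$. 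One may additionally separate them at the level of the energy, using $C_{\mathcal{N}^+_\lambda\cap\mathcal{A}}<0$ from Proposition~\ref{CN+ neg}.

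Finally I would upgrade both non-negative weak solutions to strictly positive ones. Since both $(u,v)$ and $(z,w)$ lie in $\mathcal{A}$, Proposition~\ref{regular} applies and yields $u,v,z,w \in C^{0,\alpha}(\mathbb{R}^N)$ for some $\alpha\in(0,1)$. Each component is then a non-negative continuous weak solution of a scalar fractional equation of the form $(-\Delta)^s \phi + V_i(x)\phi = g(\phi)$ whose right-hand side is non-negative for non-negative $\phi$, so the Strong Maximum Principle \cite[Theorem 1.2]{Maxfor} forces each component either to vanish identically or to be strictly positive in $\mathbb{R}^N$. Membership in $\mathcal{A}$ already rules out any vanishing component, which excludes the first alternative; hence $u,v,z,w>0$ in $\mathbb{R}^N$, and the System \eqref{sistema Principal} admits at least two positive weak solutions.

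I expect the main obstacle to be organizational rather than any fresh estimate: the genuinely delicate work — excluding semitrivial minimizers on $\mathcal{N}^+_\lambda$ through the perturbation curve $s\mapsto G(s)$, and proving strong convergence of the minimizing sequences — has already been carried out in Propositions~\ref{soluçao N-} and \ref{soluçao N^+}. The point that still demands care here is checking that the scalar equations satisfied by the individual components have the precise structure required by the maximum principle, so that the sign condition guaranteeing strict positivity (and not merely non-negativity) is available for each of $u,v,z,w$.
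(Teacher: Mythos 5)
Your proposal is correct and follows essentially the same route as the paper: take the minimizers produced by Propositions \ref{soluçao N-} and \ref{soluçao N^+}, replace the minimizing sequences by non-negative ones (Proposition \ref{mini nao negat} and its $\mathcal{N}^-_\lambda$ analogue), then apply the regularity result of Proposition \ref{regular} and the strong maximum principle \cite[Theorem 1.2]{Maxfor}, using membership in $\mathcal{A}$ to rule out identically vanishing components. The only difference is presentational: the paper's written proof carries out the argument in detail only for the $\mathcal{N}^+_\lambda$ minimizer, leaving the $\mathcal{N}^-_\lambda$ case and the distinctness of the two solutions implicit (the latter is recorded in the proof of Theorem \ref{teorem3}), whereas you spell out both solutions and their disjointness via the sign of $E''_{\lambda}$ explicitly.
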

	\begin{proof}
		Let $(u_k, v_k) \in \mathcal{N}^+_{\lambda} \cap \mathcal{A}$ be a minimizer sequence for the energy functional $E_{_\lambda}$ restricted to $\mathcal{N}^+_{\lambda} \cap \mathcal{A}$. It is easy to verify that  $(\vert u_k \vert, \vert v_k \vert) \in X$ and $(\vert u_k \vert, \vert v_k \vert) \in \mathcal{A}$.  In light of Proposition \ref{mini nao negat} we know that $(W_{k,1}, W_{k,2}) = ( t_n^+(\vert u_k \vert, \vert v_k \vert) \vert u_k\vert,\vert u_k \vert, \vert v_k \vert) \vert v_k\vert)$ is a non-negative minimizer sequence for the energy functional $E_{_\lambda}$ restricted $\mathcal{N}^+_{\lambda} \cap \mathcal{A}$. Furthermore,  by using Proposition \ref{soluçao N^+}, there exists $(W_1, W_2) \in X$ such that $$C_{\mathcal{N}^+_{\lambda} \cap \mathcal{A}} = E_{_\lambda}(W_1, W_2)$$ where $(W_{k,1}, W_{k,2}) \to (W_1, W_2)$ in $X$. In particular, $(W_{1}, W_{2})$ is a critical point of the energy functional. Notice also that $W_1, W_2 \ge 0$ in $\mathbb{R}^N$. According to \cite[Lemma 2,8]{Vambrozio2020} and Proposition \ref{regular} we conclude that $(W_1, W_2) \in (L^t(\mathbb{R}^N)\cap L^{\infty}(\mathbb{R}^N)) \times (L^t(\mathbb{R}^N)\cap L^{\infty}(\mathbb{R}^N))$ holds for each $t \in (2^*_s, \infty)$. Based on  \cite[Corollary 5.5 ]{AIalMoscSqua2016} and \cite[Lemma 2.8]{Vambrozio2020}, we conclude that $(W_1, W_2) \in (L^\infty(\mathbb{R}^N) \cap C^{0, \alpha}(\mathbb{R}^N)) \times (L^\infty(\mathbb{R}^N) \cap C^{0, \alpha}(\mathbb{R}^N))$. Now, we claim that $W_1 > 0$ and $W_2 > 0$ in $\mathbb{R}^N$. The proof for this claim follows arguing by contradiction. Assume that there exists $x_0 \in \mathbb{R}^N$ such that $W_1(x_0) = 0$. Let $B(x_0, r)$ be the open ball of radius $r > 0$ centered on $x_0$. It is not hard to verify that 
		\begin{equation}\left\{\begin{array}{lll}\label{principio máx forte} 
				(-\Delta)^sW_1 \le - V_1(x)W_1, \,\, x\in B(x_0, r), \\
				W_1 \ge 0  \mbox{ in } \mathbb{R}^N.
			\end{array}\right.
		\end{equation}
		According to strong maximum principle we obtain that $W_1 > 0$ in $\mathbb{R}^N$ or $W_1 = 0$ in $\mathbb{R}^N$, see \cite[Theorem 1.2]{LMdel2017}. Using the last assertion we deduce that $W_1 = 0$ in $B(x_0,r)$. Since $r > 0$ is arbitrary we infer that $W_1 = 0$ in $\mathbb{R}^N$. Hence, $(W_1, W_2) \notin \mathcal{A}$. However, by using the Proposition \ref{soluçao N^+}, we observe that the $(W_1, W_2) \in \mathcal{A}$.  This is a contradiction proving that $W_1$ is strictly positive in $\mathbb{R}^N$. Analogously, we obtain that $W_2$ is strictly positive in $\mathbb{R}^N$. As a consequence, $W_1, W_2$ are strictly positive in $\mathbb{R}^N$. Moreover, $(W_1,W_2) \in \mathcal{N}^+_{\lambda} \cap \mathcal{A}$ is a weak solution for the System  \eqref{sistema Principal}.
	\end{proof}

	\section{The proof of the main results}
	\noindent \textbf{The proof of Theorem \ref{teorem1}}
	Let  $\lambda \in (0, \lambda^*)$ be fixed. Due to the Propositions \ref{soluçao N^+} and \ref{sol positivas} there exists $(u, v) \in \mathcal{N}^+_{\lambda}\cap \mathcal{A}$ which is a critical point for the energy functional $E_{\lambda}$. As a consequence, $(u, v)$ is a positive solution for the System  \eqref{sistema Principal}.  
	Furthermore, $E_{\lambda}(u, v) = C_{\mathcal{N}^+_{\lambda} \cap \mathcal{A}} \le E_{\lambda}(t_n^+(u, v)(u, v)) = C_{\lambda} < 0$, see Proposition \ref{CN+ neg}.

	\noindent\textbf{The proof of Theorem \ref{teorem2}} \textbf{(i)} The main idea here is to prove that the minimization problem \eqref{CN-} admits at least one solution. In order to do that we shall apply the same ideas discussed in the proof of Proposition  \ref{soluçao N-}. Firstly, by using the Proposition \ref{CN+ neg}, there exists $(z, w) \in \mathcal{N}^-_\lambda$ such that $C_{\mathcal{N}^-_{\lambda} \cap \mathcal{A}} = E_{\lambda}(z, w)$. Here we recall also that $\mathcal{N}_\lambda^0 = \emptyset$ for each $\lambda \in (0 , \lambda^*)$. In particular, $(z, w) \in \mathcal{N}^-_\lambda$ is a critical point for the functional $E_\lambda$ which implies that $(z,w)$ is a weak solution to the System  \eqref{sistema Principal}. Notice also that $E''_{\lambda}(z, w)(z, w)^2 < 0$ holds true for each $\lambda \in (0 , \lambda^*)$. 
	
	\noindent \textbf{(ii)} Consider $\lambda \in (0, \lambda_*)$.  It is easy to verify that $t_n^-(z, w) = 1 > t_e(z, w)$. Hence, $$\lambda = R_n(z, w) = R_n(t_n^-(z, w)(z, w) < R_e(t_n^-(z, w)(z, w) = R_e(z, w).$$ 
	Thus, by using Remark \ref{obs rel Re e E}, we deduce that $E_{\lambda}(z, w) > 0$ and $C_{\mathcal{N}_{_\lambda}^-} =  E_{\lambda}(z, w) > 0$.\\
	\textbf{(iii)} For the case $\lambda = \lambda_*$ we obtain that $t_n^-(z, w) = t_e(z, w)$. The last identity implies that  $$\lambda = \lambda_*= \Lambda_e(z, w) = R_e(t_e(z, w)(z, w)) = R_e(t_n^-(z, w)(z, w)) = R_e(z, w).$$ As a consequence, by using Remark \ref{obs rel Re e E}, we deduce that $E_{\lambda}(z,w) = 0$.
	
	\noindent \textbf{(iv)} Let $\lambda \in (\lambda_*, \lambda^*)$ be fixed. Hence, for any fixed  $(u, v) \in \mathcal{A}$ we ensure the existence of two zeros for the equation  $Q_n(t) = \lambda$, see Proposition  \ref{tn-,tn+}. Furthermore, we know that $t_n^-(u, v)(u, v) \in \mathcal{N}^-_{\lambda}$. Thus, we deduce that  $\lambda_* \le \Lambda_e (u, v) = R_e(t_e(u, v)(u, v)).$ Notice also that $t_n^-(u, v) \in (0, t_e(u, v))$. Here we also observe that $R_e(t_n^-(u, v)(u, v)) < R_n(t_n^-(u, v)(u, v)) = \lambda$. Therefore, by using Remark \ref{obs rel Re e E}, we infer that  $E_{\lambda}(t_n^-(u, v)(u, v)) < 0$. As a consequence, $C_{\mathcal{N}^-_{_\lambda}} \le E_{\lambda}(t_n^-(u, v)(u, v)) < 0$. This ends the proof.

	\noindent \textbf{The proof of Theorem \ref{teorem3}} According to the Proposition \ref{soluçao N-} there exists $(u, v) \in \mathcal{N}^-_{\lambda}$ which is a critical point for the energy functional. Furthermore, by using Proposition \ref{soluçao N^+}, there exists $(z, w) \in \mathcal{N}^+_{\lambda} \cap \mathcal{A}$ which is a critical point for the energy functional. It is important to  observe that $(u, v) \neq (z, w)$ due to the fact that $\mathcal{N}^+_{\lambda} \cap \mathcal{N}^+_\lambda = \emptyset$. Moreover, by using Proposition \ref{sol positivas}, we also obtain that $u > 0, v > 0, z > 0$ and $w > 0$ in $\mathbb{R}^N$. This ends the proof. 
	
	\section{Appendix}
	In the present appendix we shall consider some useful results used in the present work. Firstly, we shall consider the following result:
	
	\begin{lem}\label{apendice2A}
		Suppose $(P)$, ($V_0$) e ($V_1$). Then we obtain that $2^{\frac{\eta - q}{\eta - 2}} \eta^{\frac{q - 2}{\eta - 2}}/q >1$ where 
		$\eta = \alpha + \beta$.
	\end{lem}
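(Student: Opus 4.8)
The plan is to take logarithms and reduce the statement to the strict concavity of the logarithm. Since $q>0$, the desired inequality $2^{\frac{\eta - q}{\eta - 2}}\eta^{\frac{q - 2}{\eta - 2}}/q>1$ is equivalent to $2^{\frac{\eta - q}{\eta - 2}}\eta^{\frac{q - 2}{\eta - 2}}>q$. The key structural observation, which makes everything work, is that the two exponents sum to $1$, namely $\frac{\eta - q}{\eta - 2}+\frac{q - 2}{\eta - 2}=\frac{\eta - 2}{\eta - 2}=1$. Writing $\frac{\eta - q}{\eta - 2}=1-\frac{q - 2}{\eta - 2}$ and factoring out one power of $2$, I would rewrite the left-hand side as $2^{\frac{\eta - q}{\eta - 2}}\eta^{\frac{q - 2}{\eta - 2}}=2\left(\frac{2}{\eta}\right)^{\frac{2-q}{\eta - 2}}$, so that the claim becomes $\left(\frac{2}{\eta}\right)^{\frac{2-q}{\eta - 2}}>\frac{q}{2}$.

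Next I would take logarithms of this last inequality. Both $2-q$ and $\eta-2$ are strictly positive by hypothesis $(P)$ (recall $q<2<\eta$), while $2/\eta<1$ and $q/2<1$. After dividing by the positive factor $2-q$ and carefully tracking the sign of $\ln(2/\eta)=-(\ln\eta-\ln 2)<0$, the inequality is equivalent to the secant-slope comparison
\[
\frac{\ln\eta-\ln 2}{\eta-2}<\frac{\ln 2-\ln q}{2-q}.
\]

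Finally, this is precisely the three-chord (slope) inequality for the strictly concave function $t\mapsto\ln t$ applied to the ordered points $q<2<\eta$ supplied by $(P)$: for a strictly concave function the slope of the secant over $[q,2]$ strictly exceeds the slope over $[2,\eta]$. Equivalently, by the mean value theorem the left slope equals $1/\xi_1$ with $\xi_1\in(q,2)$ and the right slope equals $1/\xi_2$ with $\xi_2\in(2,\eta)$, so $\xi_1<2<\xi_2$ forces $1/\xi_1>1/\xi_2$. This yields the displayed inequality and hence the lemma. The only delicate point in the whole argument is the bookkeeping of signs when clearing denominators and taking logarithms, since the exponent $\frac{q - 2}{\eta - 2}$ is negative; once the claim is put in the symmetric slope form above, the conclusion is immediate from concavity and needs no restriction on the size of $\alpha$, $\beta$, $q$, or $\theta$.
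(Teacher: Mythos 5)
Your proof is correct, and every step checks out: the identity $\frac{\eta-q}{\eta-2}+\frac{q-2}{\eta-2}=1$ does give $2^{\frac{\eta-q}{\eta-2}}\eta^{\frac{q-2}{\eta-2}}=2\left(\frac{2}{\eta}\right)^{\frac{2-q}{\eta-2}}$, the sign bookkeeping (using $0<q<2<\eta$, so $2-q>0$, $\eta-2>0$, $\ln(2/\eta)<0$) correctly turns the claim into the slope comparison $\frac{\ln\eta-\ln 2}{\eta-2}<\frac{\ln 2-\ln q}{2-q}$, and this is indeed the strict three-chord inequality for the concave function $\ln$ at the points $q<2<\eta$ (or, via the mean value theorem, $1/\xi_1>1/2>1/\xi_2$). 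The paper takes a related but distinct route: it first raises the inequality to the power $\eta-2$ to get the equivalent form $\left(\frac{\eta}{2}\right)^{q-2}>\left(\frac{q}{2}\right)^{\eta-2}$, then introduces the auxiliary function $f(x)=(x-2)\ln\left(\frac{\eta}{2}\right)-(\eta-2)\ln\left(\frac{x}{2}\right)$, computes $f''(x)=(\eta-2)/x^2>0$, observes $f(2)=f(\eta)=0$, and locates the unique critical point $x_0=(\eta-2)/\ln(\eta/2)\in(2,\eta)$ to conclude $f>0$ on $(1,2)$. The two arguments share the same analytic core — your slope inequality is literally equivalent to the paper's $f(q)>0$ after dividing by $(q-2)(\eta-2)<0$, and both rest on convexity of logarithmic expressions — but your packaging is more economical: the chord-slope form needs no auxiliary function, no second-derivative computation, and no critical-point analysis (which in the paper is in fact redundant, since strict convexity plus the two zeros at $2$ and $\eta$ already forces positivity outside $[2,\eta]$). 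Your version also makes transparent that the only hypotheses used are $0<q<2<\eta$, with strictness inherited directly from the strict concavity of $\ln$.
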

	\begin{proof}
		Firstly, we observe that $2^{\frac{\eta - q}{\eta - 2}} \eta^{\frac{q - 2}{\eta - 2}}/q >1$, is equivalent to prove that
		$$\left(\frac{\eta}{2}\right)^{q - 2} > \left(\frac{q}{2}\right)^{\eta - 2}.$$
		Define the follows auxiliary function $f: \mathbb{R}^+ \to \mathbb{R}$ given by 
		$$f(x) := (x - 2) \ln \left(\frac{\eta}{2}\right) - (\eta - 2) \ln \left(\frac{x}{2}\right).$$
		The main purpose here is to ensure that $f(x) > 0$ for each $x\in (1, 2)$. It is not hard to verify that $f''(x) = (\eta - 2)/x^2 > 0$ and $f(2) = f(\eta) = 0$. In particular, we infer that $f'(x) > 0$ for each $x > x_0 =  (\eta - 2) / \ln (\eta/2)$. Moreover, we also obtain that $f'(x) < 0$ for each $x < x_0$. It is important to stress that $(\eta - 2) / \ln (\eta/2) > 2$.  Hence, the function $f$ has a unique critical point $x_0 \in ( 2, \eta)$. Therefore, $f(x) > 0 $ for each $x \in (1, 2)$. This ends the proof.    
	\end{proof}
	\begin{lem}\label{apendice Maxwell}
		Consider the function $f: \mathbb{R} \to \mathbb{R}$ and $t_n > 0$ in such way that $f(t) = \frac{At^2 - Bt^\eta}{Ct^{p} + Dt^{q}}$ where $f(t_n) = \max\limits_{t>0} f(t)$ and $1 \le p < q < \eta$. Then $f$ has a single global maximum critical point.
	\end{lem}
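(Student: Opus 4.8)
The plan is to reduce the assertion to a clean uniqueness statement for the critical points of $f$ on $(0,\infty)$. Since the data arise from the fibering quotient $Q_n$, the coefficients $A,B,C,D$ are strictly positive, and under the standing assumption $(P)$ one has $1<p<q<2<\eta$; these sign conditions are exactly what drive the argument. First I would record the behaviour of $f$ on the half-line. Near the origin $g(t)=Ct^{p}+Dt^{q}\sim Ct^{p}$ while $N(t):=At^{2}-Bt^{\eta}\sim At^{2}$, so $f(t)\sim (A/C)t^{2-p}\to 0^{+}$ because $p<2$. The numerator is positive precisely on $(0,t_{0})$ with $t_{0}=(A/B)^{1/(\eta-2)}$, and as $t\to\infty$ one gets $f(t)\sim -(B/D)t^{\eta-q}\to-\infty$ since $\eta>q$. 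Hence $f$ is positive on $(0,t_{0})$, vanishes at $t_{0}$, and is negative afterwards, so any global maximiser (in particular the $t_{n}$ posited in the statement) is an interior point of $(0,t_{0})$ and a genuine critical point with $f(t_{n})>0$.

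Next I would compute the sign of $f'$. Writing $f=N/g$, the sign of $f'$ equals that of $h:=N'g-Ng'$, and expanding $N'=2At-\eta Bt^{\eta-1}$, $g'=pCt^{p-1}+qDt^{q-1}$ gives
\begin{equation*}
h(t)=(2-p)AC\,t^{p+1}+(2-q)AD\,t^{q+1}-(\eta-p)BC\,t^{\eta+p-1}-(\eta-q)BD\,t^{\eta+q-1}.
\end{equation*}
Grouping the two positive terms into $P(t)$ and the two negative ones into $M(t)$, so that $h=P-M$ with $P,M>0$ on $(0,\infty)$ (here $p,q<2$ is what keeps $P>0$), the critical point equation $f'(t)=0$ is equivalent to $\Phi(t):=M(t)/P(t)=1$.

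The heart of the matter, and the step I expect to be the main obstacle, is to prove that $\Phi$ is strictly increasing on $(0,\infty)$. Factoring out the common powers I would write
\begin{equation*}
\Phi(t)=t^{\eta-2}\,\frac{(\eta-p)BC+(\eta-q)BD\,t^{q-p}}{(2-p)AC+(2-q)AD\,t^{q-p}}.
\end{equation*}
The prefactor $t^{\eta-2}$ is strictly increasing because $\eta>2$. For the rational factor, setting $s=t^{q-p}$ (increasing in $t$ since $q>p$), its monotonicity in $s$ is governed by the sign of the determinant
\begin{equation*}
(\eta-q)BD\,(2-p)AC-(\eta-p)BC\,(2-q)AD=ABCD\,(q-p)(\eta-2)>0,
\end{equation*}
so the rational factor is strictly increasing as well. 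A product of two positive strictly increasing functions is strictly increasing, whence $\Phi$ is strictly increasing. Since $\lim_{t\to0^{+}}\Phi(t)=0$ and $\lim_{t\to\infty}\Phi(t)=+\infty$, the equation $\Phi(t)=1$ has exactly one solution $t_{*}$.

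Finally I would conclude: $f'$ vanishes only at $t_{*}$, so $f$ possesses a single critical point on $(0,\infty)$. The global maximiser guaranteed in the hypothesis is a critical point lying in $(0,t_{0})$, so it must coincide with $t_{*}$; and since $t_{*}$ is the only critical point while $f(t_{*})>0\ge f(t)$ for every $t\ge t_{0}$, it is the global maximum over all $t>0$. This shows that $f$ has a single global maximum critical point, as claimed. The only delicate point is the determinant identity $ABCD\,(q-p)(\eta-2)>0$, which is precisely where the strict orderings $p<q$ and $2<\eta$ are used; everything else is routine asymptotic bookkeeping.
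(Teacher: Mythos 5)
Your proof is correct, but it takes a genuinely different route from the paper. The paper rewrites $f'(t)=0$ as the equality of two logarithmic derivatives, $\mathcal{H}(t)=tH'(t)/H(t)$ and $\mathcal{G}(t)=tG'(t)/G(t)$, shows $\mathcal{G}$ is strictly increasing and trapped in $(p,q)$ while $\mathcal{H}$ is strictly decreasing on each component of its domain, and then runs a crossing argument that must be split across several intervals because $\mathcal{H}$ blows up at the zero $t_{0}=(A/B)^{1/(\eta-2)}$ of the numerator (limits $2$, $-\infty$, $+\infty$, $\eta$ are tracked separately, and the regions where $\mathcal{H}<0$ or $\mathcal{H}>\eta$ are excluded using $0<p<\mathcal{G}<q<\eta$). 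You instead expand $h=N'g-Ng'$ explicitly, split it into its positive part $P$ and negative part $M$, and reduce everything to the single equation $\Phi(t)=M(t)/P(t)=1$, with strict monotonicity of $\Phi$ coming from the factorization $\Phi(t)=t^{\eta-2}\cdot\frac{(\eta-p)BC+(\eta-q)BDt^{q-p}}{(2-p)AC+(2-q)ADt^{q-p}}$ and the determinant identity $(\eta-q)(2-p)-(\eta-p)(2-q)=(q-p)(\eta-2)>0$, which is correct. Your version buys several things: there is no singular locus to handle ($\Phi$ is smooth and monotone on all of $(0,\infty)$), you get the global sign of $f'$ for free ($f'>0$ iff $\Phi<1$), so the unique critical point is automatically the global maximum without invoking the hypothesized maximizer, and the argument degenerates gracefully when $p=q$ (the rational factor becomes a positive constant and $\Phi=ct^{\eta-2}$ is still strictly increasing), which matters since assumption $(P)$ allows $p=q$ while the lemma as stated does not. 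The paper's version, in turn, is the standard nonlinear-Rayleigh-quotient picture and makes the structural roles of $H$ and $G$ visible. One bookkeeping remark: both proofs silently strengthen the lemma's stated hypothesis $1\le p<q<\eta$ to the actual application regime $q<2<\eta$ with $A,B,C,D>0$ --- the paper needs $p<2$ and $\eta>2$ for its limits and monotonicity, and you need $q<2$ to keep $P>0$ --- and you correctly flagged this; your parenthetical $1<p$ is slightly stronger than $(P)$'s $p\ge 1$, but nothing in your argument uses it.
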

	\begin{proof}
		Recall that $$f(t) = \frac{At^2 - Bt^\eta}{Ct^{p} + Dt^{q}}$$ and $f(t_n) = \max_{t>0} f(t)$ where $1 \le p < q < \eta$. In particular, we observe that 
		\begin{equation}\begin{array}{lll}\label{apendA1}
				\left\{\begin{array}{c}
					G(t):=Ct^{p} + Dt^{q} > 0 \ \ \forall t > 0 \\
					G'(t) = pC t^{p - 1} + qD t^{q - 1} , \ \ \forall t > 0
				\end{array}\right.
			\end{array}
		\end{equation}
		and
		\begin{equation}\begin{array}{lll}\label{apendA2}
				\left\{\begin{array}{c}
					H(t):=At^2 - Bt^{\eta} > 0 \ \ if \ \ 0 < t < \left\{\frac{A}{B} \right\}^{\frac{1}{\eta -2}} \\
					H'(t) = 2A t - \eta B t^{\eta -1} > 0 \ \ if \ \  0 < t < \left\{\frac{2A}{\eta B} \right\}^{\frac{1}{\eta -2}} \\
					H'(t) = 2A t - \eta B t^{\eta -1} < 0 \ \ if \ \ t > \left\{\frac{2A}{\eta B} \right\}^{\frac{1}{\eta -2}} \\
				\end{array}\right.
			\end{array}
		\end{equation}
		Notice also that $f(t)G(t) = H(t)$ and 
		$f'(t)G(t) + f(t)G'(t) = H'(t)$. Therefore, for $t > 0$ and $t \ne \left\{A/B \right\}^{\frac{1}{\eta -2}}$, we obtain that
		$$f'(t) = 0 \ \ \Longleftrightarrow \ \ f(t) = \frac{H'(t)}{G'(t)} \ \  \Longleftrightarrow \ \ \frac{H(t)}{G(t)} = \frac{H'(t)}{G'(t)} \ \  \Longleftrightarrow \ \  \frac{tH'(t)}{H(t)} = \frac{tG'(t)}{G(t)}.$$
		Consider the function $\mathcal{G} : \mathbb{R} \to \mathbb{R}$ given by 
		$$\mathcal{G}(t):= \frac{tG'(t)}{G(t)} = \frac{pCt^{p} + q Dt^{q}}{Ct^{p} + Dt^{q}} = \frac{pCt^{p} + pDt^{q} +(q - p) Dt^{q}}{Ct^{p} + Dt^{q}} = q + \frac{(p - q)Dt^q}{Ct^{p} + Dt^{q}}.$$
		It is easy to verify that $p < \mathcal{G}(t) < q$ for all $t > 0$. Therefore, we obtain that 
		$$\mathcal{G}'(t) = (p - q)^2CD\frac{t^{q + p - 1}}{(Ct^{p} + Dt{q})^2}.$$ 
		Hence, $\mathcal{G}(t)$ is strictly increasing. Notice also that the function $\mathcal{H}: \mathbb{R} \to \mathbb{R}$ given by 
		$$\mathcal{H}(t):= \frac{tH'(t)}{H(t)} = \frac{2At^2 - \eta Bt^\eta}{At^2 - Bt^\eta} = \frac{2At^2 - 2Bt^\eta - (\eta - 2)Bt^\eta}{At^2 - Bt^\eta} = 2 - \frac{(\eta - 2)Bt^\eta}{At^2 - Bt^\eta}$$
		satisfies $$\mathcal{H}'(t) = - (\eta - 2)^2 AB\frac{t^{\eta + 1}}{(At^2 - B t^\eta)^2} < 0.$$ Furthermore, we observe that
		\begin{equation}\label{apendA3}
			\lim_{t \to 0^+} \mathcal{H}(t) = 2, 
			\lim_{t \to \infty} \mathcal{H}(t) = \eta,
		\end{equation}
		\begin{equation}\label{apendA4}
			\lim_{t \to \left(\left[\frac{A}{B}  \right]^\frac{1}{\eta - 1}\right)^-} \mathcal{H}(t) = - \infty,
			\lim_{t \to \left(\left[\frac{A}{B}  \right]^\frac{1}{\eta - 1}\right)^+} \mathcal{H}(t) =  \infty.
		\end{equation}
		It is not hard to verify also that 
		\begin{equation}\label{apendA4}
			\mathcal{H}(t) < 0, \,\, \mbox{for each} \,\, \left[\frac{2A}{\eta B}  \right]^\frac{1}{\eta - 2} < t < \left[\frac{A}{B}  \right]^\frac{1}{\eta - 2} \,\, \mbox{and} \,\,
			\mathcal{H}(t) > \eta, \, \, \mbox{for each} \,\,  t > \left[\frac{A}{B}  \right]^\frac{1}{\eta - 2}.
		\end{equation}
		Moreover, we obtain that $p < \mathcal{G}(t) < q$. Hence, we deduce that the functions $\mathcal{H}$ and $\mathcal{G}$ are equal only once on the entire real line. Furthermore, $\mathcal{H}$ and $\mathcal{G}$ are equal in some point $0 < t < \left[2A/\eta B\right]^\frac{1}{\eta - 2}$. In other words, there exists a single $t \in \left(0, \left[2A/\eta B\right]^\frac{1}{\eta - 2}\right)$ such that $$\frac{tH'(t)}{H(t)} = \frac{tG'(t)}{G(t)}.$$ This assertion shows that $f$ has a single global maximum point. This ends the proof. 
	\end{proof}
	
	Now, we shall consider a basic result to apply in the function $f$ described in the Lemma \ref{apendice Maxwell}. More specifically, using a standard argument, we consider the following basic result:  
	
	\begin{lem}\label{ex3.18Elon analise reta} 
		Let $a_1$, $b_1$, $a_2$, $b_2$ be real numbers where $b_1$ e $b_2$ are positive. Then $\frac{a_1 + a_2}{b_1 + b_2}$ is between the smallest and the largest of elements $\frac{a_1}{b_1}$ and $\frac{a_2}{b_2}$, i.e., we have
		$$\min\left\{\frac{a_1}{b_1}, \frac{a_2}{b_2}\right\} \le \frac{a_1 + a_2}{b_1 + b_2} \le \max\left\{\frac{a_1}{b_1}, \frac{a_2}{b_2}\right\}.$$
	\end{lem}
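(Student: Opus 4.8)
The plan is to reduce the claim to the ordered case and then verify it by a direct sign computation of the relevant differences. Since the two quotients $\frac{a_1}{b_1}$ and $\frac{a_2}{b_2}$ play symmetric roles, I would assume without loss of generality that
$$\frac{a_1}{b_1} \le \frac{a_2}{b_2},$$
so that $\min\{a_1/b_1, a_2/b_2\} = a_1/b_1$ and $\max\{a_1/b_1, a_2/b_2\} = a_2/b_2$. Under this convention the double inequality to be proved becomes
$$\frac{a_1}{b_1} \le \frac{a_1 + a_2}{b_1 + b_2} \le \frac{a_2}{b_2}.$$
The only structural hypothesis I would exploit is $b_1, b_2 > 0$, which guarantees that $b_1 + b_2 > 0$ and that multiplying the ordering $a_1/b_1 \le a_2/b_2$ through by $b_1 b_2 > 0$ is sign-preserving, yielding the cross-multiplied form $a_1 b_2 \le a_2 b_1$.

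For the left-hand inequality I would subtract and put over a common denominator:
$$\frac{a_1 + a_2}{b_1 + b_2} - \frac{a_1}{b_1} = \frac{b_1(a_1 + a_2) - a_1(b_1 + b_2)}{b_1(b_1 + b_2)} = \frac{a_2 b_1 - a_1 b_2}{b_1(b_1 + b_2)}.$$
The numerator is nonnegative by the cross-multiplied inequality $a_1 b_2 \le a_2 b_1$, and the denominator is strictly positive since $b_1 > 0$ and $b_1 + b_2 > 0$; hence the whole expression is $\ge 0$, which is exactly the left inequality. The right-hand inequality follows in the same way from the analogous computation
$$\frac{a_2}{b_2} - \frac{a_1 + a_2}{b_1 + b_2} = \frac{a_2(b_1 + b_2) - b_2(a_1 + a_2)}{b_2(b_1 + b_2)} = \frac{a_2 b_1 - a_1 b_2}{b_2(b_1 + b_2)} \ge 0,$$
using once more that $a_2 b_1 - a_1 b_2 \ge 0$ and that $b_2(b_1 + b_2) > 0$.

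Since neither step uses the ordering except through the sign of $a_2 b_1 - a_1 b_2$, the argument is fully symmetric: had the reverse ordering held, the same two identities would simply change the sign of the numerator and reverse both inequalities, again placing the mediant between the two quotients. I do not anticipate a genuine obstacle here, as the result is elementary; the only point requiring a little care is to invoke the positivity of $b_1, b_2$ both when clearing denominators in the ordering hypothesis and when determining the sign of the denominators $b_1(b_1+b_2)$ and $b_2(b_1+b_2)$, so that no inequality is inadvertently reversed.
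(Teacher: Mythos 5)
Your proof is correct and complete: the reduction to the ordered case, the cross-multiplied inequality $a_1 b_2 \le a_2 b_1$, and the two difference computations with strictly positive denominators are all valid, and this is precisely the standard mediant-inequality argument. The paper itself gives no proof of this lemma --- it is stated in the Appendix as a ``basic result'' following ``a standard argument'' (the label suggests it is taken as Exercise 3.18 from Elon Lima's real analysis text) --- so your write-up simply supplies, correctly, the argument the authors chose to omit.
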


	

	\section{\textbf{Declarations}}
	
	\textbf{Ethical Approval }
	
	It is not applicable.

	\textbf{Competing interests}
	
	There are no competing interests.

	\textbf{Authors' contributions}

	All authors wrote and reviewed this paper.

	\textbf{Funding}

	CNPq/Brazil with grant 309026/2020-2 and IFG with grant 23744.001007/2019-69.

	\textbf{Availability of data and materials}

	All the data can be accessed from this article.


\begin{thebibliography}{99}
		
		\bibitem{ambro1} {\sc A. Ambrosetti, E. Colorado}, \emph{Bound and ground states of coupled nonlinear Schr\"odinger
			equations}, C. R. Math. Acad. Sci. Paris 342 (2006), no. 7, 453-458.
		
		
		\bibitem{ambro2} {\sc A. Ambrosetti, E. Colorado}, \emph{Standing waves of some coupled nonlinear Schr\"odinger equations}, J. Lond. Math. Soc. (2) 75 (2007), no. 1, 67-82.
		
		
		
		\bibitem{aka} {\sc G. Akagi, G. Schimperna, A. Segatti}, \emph{Fractional Cahn-Hilliard, Allen-Cahn and porous medium equations}, J. Differ. Equ. 261 \textbf{(6)}, (2016) 2935–2985. 
		
		
		\bibitem{ai} {\sc N. Aissaoui, B. Li, W. Long},  \emph{Two solutions for fractional elliptic systems}, Acta Appl Math 178, 3 (2022).
		
		
		\bibitem{Vambrozio2020} {\sc V. Ambrosio, T. Isernia}, \emph{Concentration of positive solutions for a class of fractional p-Kirchhoff type equations}, Proceedings of the Royal Society of Edinburgh, (2020) 1--55.
		
		
		
		
		\bibitem{bisci}
		{\sc M. Bisci, V. Radulescu, R. Servadei}, \emph{Variational methods for nonlocal fractional problems}, Encyclopedia of Mathematics and its Applications, {\bf 162}. Cambridge University Press, Cambridge, 2016.
		
		\bibitem{ber} {\sc J. Bertoin},  \emph{Levy processes}, Cambridge Tracts in Mathematics. Cambridge University Press, Cambridge, 1996.
		
		\bibitem{wang}
		{\sc T. Bartsch, Z.Q. Wang},
		\emph{Existence and multiplicity results for some superlinear elliptic problems on $\mathbb{R}^{N}$},
		Comm. Part. Diff. Eq. {\bf 20} (1995) 1725--1741.
		
		
		\bibitem{brande} {\sc C. Brandle, E. Colorado, A. de Pablo, U. Sanchez}, \emph{A concave-convex elliptic problem
			involving the fractional Laplacian}, Proc. Roy. Soc. Edinburgh Sect. A 143 (2013), no.1, 39–71.
		
		
		\bibitem{cafa} {\sc L. Caffarelli}, \emph{Nonlocal diffusion, drifts and games}, Nonlinear Partial Differential Equations,
		Abel Symposia \textbf{7}, (2012) 37–52.
		
		
		
		\bibitem{chang} {\sc X. Chang, Z.Q. Wang},
		\emph{Ground state of scalar field equations involving a fractional Laplacian with general nonlinearity}, Nonlinearity, {\bf 26} (2013), 479-494.
		
		
		
		\bibitem{chen} {\sc W. Chen, M. Squassina}, \emph{Critical nonlocal systems with concave–convex powers}, Adv. Nonlinear Stud. 16 (2016). 821–842.
		
		
		\bibitem{colorado}  {\sc E. Colorado, A. Ortega}, \emph{Nonlinear fractional Schr\"odinger equations coupled by power--type nonlinearities}, Adv. in Differential Equations,  28.1/2, (2023), 113-142.
		
		
		
		\bibitem{consta} {\sc P. Constantin, J. Wu},  \emph{Behavior of solutions of $2$D quasi-geostrophic equations}, SIAM J. Math. Anal. 30 \textbf{(5)}, (1999) 937–948.
		
		
		\bibitem{LMdel2017}  {\sc L. M. Del Pezzo, A. Quaas}, \emph{A hopf’s lemma and a strong minimum principle for the fractional $p$-Laplacian}, Journal of Differential Equations (2017) 765--778
		
		
		\bibitem{DipierroRegul}
		{\sc S. Dipierro, M. Medina, E. Valdinoci},
		\emph{Fractional elliptic problems with critical growth in the whole of $\mathbb{R}^N$},
		Appunti. Scuola
		Normale Superiore di Pisa, 15. Edizioni della Normale, Pisa, (2017), 9.
		
		\bibitem{pala}
		{\sc S. Dipierro, G.Palatucci, E. Valdinoci},
		\emph{Existence and symmetry results for a Schr\"{o}dinger type problem involving the fractional Laplacian},
		Matematiche (Catania) {\bf 68} (2013), 201-216.
		
		
		\bibitem{Maxfor} {\sc L. M. Del Pezzo, A. Quaas},  \emph{A Hopf's lemma and a strong minimum principle for the fractional p-Laplacian}, Journal of Differential Equations 263.1 (2017), 765-778.
		
		
		
		\bibitem{DRABEK}  {\sc P. Dr\'abek, J. Milota}, \emph{Methods of nonlinear analysis}, Applications to differential equations. Second edition. Birkh\"auser Advanced Texts: Basler Lehrbucher, (2013).
		
		
		
		\bibitem{jotinha} {\sc J. C. de Albuquerque, M. F. Furtado, Edcarlos D. Silva}, \emph{Fractional elliptic systems with noncoercive potentials},Z. Angew. Math. Phys. (2021) 72-187.
		
		
		
		\bibitem{defigueiredo} {\sc D. G. de Figueiredo, O. Lopes}, \emph{Solitary waves for some nonlinear Schr\"odinger systems},
		Ann. Inst. H. Poincar\'e Anal. Non Lineaire 25 (2008), no. 1, 149–161.
		
		
		\bibitem{pablo} {\sc A. de Pablo, F. Quir\'os, A. Rodr\'iguez, J.L. V\'azquez}, \emph{A fractional porous medium equation}, Adv. Math. 226 \textbf{(2)}, (2011), 1378–1409,
		
		\bibitem{felmer}
		{\sc P. Felmer, A. Quaas}, J. Tan,
		\emph{Positive solutions of the nonlinear Schr\"{o}dinger equation with the fractional Laplacian},
		Proc. Roy. Soc. Edinburgh Sect. A \textbf{142} (2012), 1237-1262.
		
		
		\bibitem{biboa} {\sc G. Gilboa, S. Osher}, \emph{Nonlocal operators with applications to image processing},
		Multiscale Model. Simul., 7 \textbf{(3)}, (2008), 1005–1028.
		
		
		
		\bibitem{guo} {\sc Q. Guo, X. He}, \emph{Least energy solutions for a weakly coupled fractional Schr\"odinger system}, Nonlinear Anal. 132, (2016)
		141–159.
		
		
		\bibitem{AIalMoscSqua2016} {\sc A. Iannizzotto, S. Mosconi},  M. Squassina \emph{Global H\"older regularity for the fractional $p$-Laplacian}, Rev. Mat. Iberoam. (2016) 1353--1392.
		
		\bibitem{yavdat1} {\sc Y. Il'yasov}, \emph{On extreme values of Nehari manifold method via nonlinear Rayleigh’s quotient}, Topol.
		Methods Nonlinear Anal. 49 (2017), no. 2, 683–714.
		
		\bibitem{yavdat2} {\sc Y. Il'yasov}, \emph{On nonlocal existence results for elliptic equations with convex-concave nonlinearities}, Nonl. Anal.: Th., Meth.  Appl., 61(1-2), (2005), 211-236.
		
		\bibitem{lu} {\sc D. Lu, S. Peng}, \emph{On the positive vector solutions for nonlinear fractional Laplacian systems with linear coupling},
		Discrete Contin. Dyn. Syst. 37, (2017), 3327–3352.
		
		
		\bibitem{las} {\sc N. Laskin}, \emph{Fractional quantum mechanics and L\'evy path integrals}, Phys. Lett. A 268 \textbf{(4)}, (2000), 298–305. 
		
		
		\bibitem{Laskin1} {\sc N. Laskin}, \emph{Fractional Schr\"{o}dinger equation},
		Phys. Rev. E, \textbf{66} (2002), 056108 7pp.
		
		
		\bibitem{maia} {\sc L. Maia, E. Montefusco, B. Pellacci}, \emph{Positive solutions for a weakly coupled nonlinear
			Schr\"odinger system}, J. Differential Equations 229 (2006), no. 2, 743–767.
		
		
		\bibitem{guia}
		{\sc E.Di Nezza, G. Palatucci, E. Valdinoci},
		\emph{Hitchhiker's guide to the fractional Sobolev spaces},
		Bull. Sci. Math. {\bf 136} (2012), 521-573.
		
		\bibitem{oliveira} F. Oliveira, H. Tavares, \emph{Ground states for a nonlinear Schr\"odinger system with sub-
			linear coupling terms}, Advanced Nonlinear Studies, vol. 16, no. 2 (2016), 381–387.
		
		
		\bibitem{pompo} A. Pomponio, \emph{Coupled nonlinear Schr\"odinger systems with potentials}, J. Differential
		Equations 227 (2006), no. 1, 258–281.
		
		\bibitem{Regularizado}
		{\sc  E. D. Silva, J. C. de Albuquerque, R. Ruviaro},
		\emph{Existence of ground states for a class of fractional coupled},
		Preprint.
		
		
		\bibitem{servadei} {\sc R. Servadei, E. Valdinoci}, \emph{The Brezis–Nirenberg result for the fractional Laplacian}, Trans. Am. Math. Soc. 367 (1), (2015), 67–102.
		
		
		\bibitem{servadei1} {\sc R. Servadei, E. Valdinoci}, \emph{Moutain pass solutions for nonlinear elliptic operators}, J. Math. Anal. Appl. 389 (2012), 887–898.
		
		\bibitem{servadei2} {\sc R. Servadei, E. Valdinoci}, \emph{Variational methods for non-local operator of elliptic type}, Discrete Contin. Dyn. Syst. 5 (2013), 2105–2137.
		
		
		\bibitem{secchi}
		{\sc S. Secchi},
		\emph{Ground state solutions for nonlinear fractional Schr\"odinger equations in $\mathbb{R}^{N}$},
		J. Math. Phys. {\bf 54} (2013), 031501 17pp.
		
		\bibitem{secchii}
		{\sc S. Secchi},
		\emph{On fractional Schr\"{o}dinger equations in $\mathbb{R}^{N}$ without the Ambrosetti-Rabinowitz condition},
		Topol. Methods Nonlinear Anal. {\bf 47} (2016), no. 1, 19–41.
		
		\bibitem{vasquez1} {\sc J.L. V\'azquez}, \emph{Nonlinear diffusion with fractional Laplacian operators}, in: Helge Holden, Kenneth H. Karlsen (Eds.), Nonlinear partial differential equations: the Abel Symposium 2010, Springer, (2012), 271–298.
		
		\bibitem{vasquez2} {\sc J.L. V\'azquez}, \emph{The Dirichlet problem for the fractional $p$-Laplacian evolution equation}, J. Differential Equations 260 \textbf{(7)},
		(2016), 6038–6056.
		
		\bibitem{vasquez3} {\sc J.L. V\'azquez}, \emph{The evolution fractional $p$-Laplacian equation in $\mathbb{R}^N$ in the sublinear case}, Calc. Var. Partial Differential
		Equations 60 \textbf{140}, (2021), 59.
		
		
		\bibitem{ya} {\sc M. Yamamoto}, \emph{Asymptotic expansion of solutions to the dissipative equation with fractional Laplacian}, SIAM J. Math. Anal. 44 \textbf{(6)}, (2012) 3786–3805.
		
	\end{thebibliography}
\end{document}